\documentclass[11pt]{amsart}



\usepackage[backend=biber]{biblatex}
\addbibresource{biblio.bib}

\usepackage{graphicx}%
\usepackage{multirow}%
\usepackage{amsmath,amssymb,amsfonts}%
\usepackage{amsthm}%
\usepackage{mathrsfs}%
\usepackage[title]{appendix}%
\usepackage{xcolor}%
\usepackage{textcomp}%
\usepackage{manyfoot}%
\usepackage{booktabs}%
\usepackage{algorithm}%
\usepackage{algorithmicx}%
\usepackage{algpseudocode}%
\usepackage{listings}%
\usepackage{mathtools}
\usepackage{relsize}
\usepackage{setspace}
\usepackage{enumitem}
\usepackage{tikz-cd}
\usepackage[all]{xy}



\newtheorem{theorem}{Theorem}[section]
\newtheorem{proposition}[theorem]{Proposition}
\newtheorem{lemma}[theorem]{Lemma}
\newtheorem{corollary}[theorem]{Corollary}

\theoremstyle{definition}
\newtheorem{definition}[theorem]{Definition}

\theoremstyle{remark}
\newtheorem{remark}[theorem]{Remark}
\newtheorem{example}[theorem]{Example}
\AtBeginEnvironment{example}{%
  \pushQED{\qed}
}
\AtEndEnvironment{example}{\popQED\endexample}


\newcommand{\Z}{\ensuremath{\mathbb{Z}}}

\newcommand{\F}{\ensuremath{\mathbb{F}}}

\newcommand{\comp}{\mathsf{c}}

\newcommand{\zmap}{z} 
\newcommand{\vmap}{v} 

\newcommand{\set}[2]{\left\{ \, {#1} \; \middle\vert \; {#2} \, \right\}}

\newcommand{\deff}[1]{\textbf{\emph{#1}}}
\newcommand{\abs}[1]{\lvert {#1} \rvert}
\newcommand{\hyph}{\mbox{-}}
\newcommand{\symdif}{\mathbin{\bigtriangleup}}
\newcommand{\bigsymdif}{\mathop{\mathlarger{\mathlarger{\mathlarger{\bigtriangleup}}}}\displaylimits}
\newcommand{\func}[3]{{#1} \colon {#2} \to {#3}}

\newcommand{\pow}[1]{\mathcal{P}(#1)}
\newcommand{\vv}[1]{\boldsymbol{#1}}


\DeclareMathOperator{\aff}{aff}
\DeclareMathOperator{\spann}{span}

\DeclareMathOperator{\AG}{AG}

\DeclareMathOperator{\venn}{Venn}

\usepackage{parskip}


\title[Affine Equivalence of Subsets of $\mathbb{F}_2^n$ via Venn Diagrams]{Affine Equivalence of Subsets of $\mathbb{F}_2^n$ via Venn Diagrams and Applications to Sidon Sets}












\author[Calta]{Kariane Calta}
\address{Department of Mathematics \& Statistics, Vassar College, Poughkeepsie, New York 12604}
\email{kacalta@vassar.edu}

\author[Covey]{Sarah Covey}
\address{Department of Mathematics, Harvey Mudd College, Claremont, CA 91711}
\email{hcovey@hmc.edu}

\author[Goldberg]{Timothy E.~Goldberg}
\address{Department of Mathematics, Lenoir-Rhyne University, Hickory, North Carolina 28601}
\email{timothy.goldberg@lr.edu}

\author[Rose]{Lauren L.~Rose}
\address{Department of Mathematics, Bard College, Annandale-on-Hudson 12504}
\email{rose@bard.edu}

\author[Rose-Levine]{Daniel Rose-Levine}
\address{Department of Mathematics, Bard College, Annandale-on-Hudson 12504}
\email{dr6048@bard.edu}

\date{\today}



\begin{document}

\begin{abstract} 
    Two subsets $S$ and $T$ of $\F_2^n$ are \emph{affinely equivalent} if there is an affine automorphism of $\F_2^n$ taking $S$ to $T$. Given a basis of the affine span of $S$, we can construct a Venn diagram whose regions partition $S$. We prove that any two bases of $\aff(S)$ will have the same Venn diagram up to a linear permutation of the Venn regions. Moreover, we prove that two sets are affinely equivalent if and only if there is a cardinality-preserving linear permutation from the Venn regions of $S$ to the Venn regions of $T$. We use these results to classify certain Sidon sets up to affine equivalence.
\end{abstract}

\maketitle

\setcounter{tocdepth}{1}
\tableofcontents

\section{Introduction}

The work described in this paper was inspired by studying the card game \emph{Quads}, created by Rose and Pereira as \emph{SuperSET} in 2013, \cite{Rosepereira2013Super}, and published by the Association for Women in Mathematics as \emph{EvenQuads}, \cite{evenquads}.
By coding the 64 cards as binary vectors in $\F_2^6$, the game can be viewed as a model for $\operatorname{AG}(6,2)$, the finite affine geometry of order $2$ and dimension $6$. This can be expanded to a game with $2^n$ cards for any positive $n$, which corresponds to $\F_2^n$ and models $\operatorname{AG}(n,2)$. The goal of the game is to identify four cards fitting a particular pattern, which corresponds to four vectors $\vv{a}, \vv{b}, \vv{c}, \vv{d} \in \F_2^n$ such that $\vv{a} + \vv{b} + \vv{c} + \vv{d} = \vv{0}$.  A set of four such cards or vectors is called a \emph{quad}, and the quads are exactly the affine planes in $\F_2^n$. For a detailed description, see \cite{lamatpaper}.

Of particular interest are quad-free collections of cards (or vectors), called \emph{quad caps}, \emph{Sidon sets}, or \emph{2-caps} (\cite{Bennett}). Sidon sets have many applications, for example in cryptography for designing secure systems, in coding theory for constructing error-correcting codes, and in signal processing for creating efficient frequency-hopping sequences. See for example, \cite{leon}, \cite{osorio}, and \cite{niu}. An active research area in cryptography and coding theory is the study and construction of maximal Sidon sets in $\F_2^n$, and in particular determining the maximum size of a Sidon set in each dimension. This is the analog of the famous \emph{Cap Set problem} for the card game SET,
which remains unsolved in dimensions $n \geq 7$. Although asymptotic bounds are known for maximum size Sidon sets, \cite{taitwon}, explicit constructions of all possible caps are only known for $n \leq 7$, \cite{quad128}, and exact maximum sizes are unknown for dimensions $n > 10$. A subsidiary question is to determine when two Sidon sets of the same size are affinely equivalent. Recent papers on this and related topics include \cite{potts}, \cite{nagy}, \cite{RedmanRoseWalker}, \cite{carlet}, and \cite{daniel-sproj}.

For the remainder of this paper, we will refer to Sidon sets in $\F_2^n$ as \emph{quad caps} or just \emph{caps}.
Although each quad cap has a well-defined cardinality and dimension, these two numbers do not characterize the set, even up to affine equivalence. Quad caps generally do not have any apparent algebraic or geometric structure, and so determining when they are affinely equivalent can be challenging. In this paper, we develop a \emph{Venn diagram method} for determining whether any two subsets of $\F_2^n$, not just quad caps, are affinely equivalent. 

In order to do this, we make use of the fact that the power set $\pow{S}$ of a set $S$ is a vector space over $\F_2$ under the operation of symmetric difference of sets. In Section~\ref{sec:affine}, we recall the key ideas from affine linear algebra over $\Z_2$, and define the \emph{even zero-sum sets} of $S \subseteq \F_2^n$, which form a linear subspace $E(S)$ of $\pow{S}$. This subspace captures the affine geometric structure of $S$, and a bijection between subsets of $\F_2^n$ is an affine equivalence if and only if it induces a bijection of the even zero-sums sets, (Theorem~\ref{key_thm}).

In Section~\ref{sec:venn-regions}, we examine linear subspaces $\mathcal{V} \subseteq \pow{S}$ in general, and define a partition of $S$ into \emph{Venn regions}, which are the connected components of the Venn diagram of sets in $\mathcal{V}$. The collection $\venn(S, \mathcal{V})$ of these Venn regions is also a vector space over $\F_2$ which encodes information about $\mathcal{W}$. In Section~\ref{sec:venn-maps}, we define isomorphisms between pairs $(S, \mathcal{V})$ and $(T, \mathcal{W})$, where $S, T \subseteq \F_2^n$ and $\mathcal{V}, \mathcal{W} \subseteq \pow{S}$ are linear subspaces. In particular, we show that these pairs are isomorphic if and only if there is a cardinality-preserving, linear bijection $\venn(S, \mathcal{V}) \to \venn(T, \mathcal{W})$.  

In Section~\ref{sec:zero-sums}, we study $E(S)$ in more depth, and apply the results of previous sections to obtain our main theorem, that subsets $S, T \subseteq \F_2^n$ are affinely equivalent if and only if there is a cardinality-preserving, linear bijection $\venn(S, E(S)) \to \venn(T, E(T))$.

In the remaining sections of the paper, we apply the tools developed in Sections 3--5 to study quad caps. In Section~\ref{sec:diff3} we obtain a complete characterization of caps of size $k$ and dimension $d=k-3$. In Section~\ref{sec:dim7caps}, we consider the templates introduced in \cite{quad128} for $7$-dimensional caps of sizes 10--12, and verify which templates are affinely equivalent using the Venn diagram method.

\section{Affine linear algebra in $\F_2^n$} \label{sec:affine}

The set $\AG(n,q)$ is the finite affine geometry of dimension $n$ and order $q$, where $q$ is a power of a prime. A standard model for $\AG(n,q)$ consists of vectors in $\F_q^n$, where $\F_q$ is the field of order $q$. In this model the affine subspaces (called \deff{flats}) are just translates of linear subspaces. A detailed description of the geometry of $\AG(n,q)$, and in particular $\AG(n,2)$, can be found in \cite{lamatpaper}. In particular, we assume the reader is familiar with the definitions and basic properties of affine combination, affine independence and dependence, affine span, affine subspace (also called a \deff{flat}), affine basis and dimension, affine map, and affine isomorphism and automorphism.

\begin{definition}
    Let $V$ be a vector space over a field $\F$, and let $S \subseteq V$. A \deff{basis} for $S$ is a basis for the flat $\aff(S)$, and the dimension of $S$ is $\dim(S) = \dim \aff(S)$.
    Subsets $S, T \subseteq V$ are \deff{affinely equivalent} if there is an affine automorphism $f$ of $V$ such that $f(S) = T$, or equivalently if there is a bijection $\func{g}{S}{T}$ that can be extended to an affine automorphism of $V$. 
\end{definition}

For vector spaces over the field $\F_2 = \{0, 1\}$, the standard definitions take on a somewhat peculiar character. In particular, the affine combinations are precisely sums of an odd number of elements, and so $S \subseteq \F_2^n$ is affinely dependent if and only if there is a nonempty even subset of $S$ whose elements sum to $\vv{0}$. Moroever, if $F \subseteq \F_2^n$ is a flat of dimension $d$, then $\abs{F} = 2^{d+1}$.


        




        



Inspired by this characterization of affinely dependent sets, we make the following definition.

\begin{definition} \label{def:zero-sums}
A subset $\{\vv{x}_1, \ldots, \vv{x}_{\ell}\}$ of $\F_2^n$ is a \deff{zero-sum set} if
\[
    \vv{x}_1 + \cdots + \vv{x}_l = \vv{0}.
\]
An \deff{even zero-sum set} is a zero-sum set of even cardinality. We define the empty set $\varnothing$ to be an even zero-sum set. We denote the collection of even zero-sum subsets of a set $S \subseteq \F_2^n$ by $E(S)$.
\end{definition}

Note that we are not concerned with odd zero-sum sets, because those do not pertain to the affine geometric structure of $\F_2^n$.

\begin{remark}
    Throughout the remainder of this paper, we will regularly write a subset $S \subseteq \F_2^n$ as the disjoint union $S = B \sqcup D$, where $B$ is a basis for $S$ and $D = S - B$.
\end{remark}

\begin{example} \label{running-example-1}
    Suppose $n \geq 9$, and consider the subset
    \[
        S
        = B \sqcup D
        = \{\vv{a}_1, \ldots, \vv{a}_{10}\} \sqcup \{\vv{w}_1, \vv{w}_2, \vv{w}_3 \}
        \subseteq \F_2^n
    \]
    where $B$ is affinely independent and
    \begin{align*}
        \vv{w}_1 &= \vv{a}_1 + \cdots + \vv{a}_7, \\
        \vv{w}_2 &= \vv{a}_4 + \cdots + \vv{a}_{10}, \\
        \vv{w}_3 &= \vv{a}_3 + \vv{a}_4 + \vv{a}_8.
    \end{align*}
    Hence $B$ is a basis for $S$, and $\abs{S} = 13$ and $\dim(S) = 9$. Since $\vv{w}_1 + \vv{a}_1 + \cdots + \vv{a}_7 = \vv{0}$, the set $X_1 = \{ \vv{w}_1, \vv{a}_1, \ldots, \vv{a}_7 \}$ is an element of $E(S)$. Similarly, from the equations for $\vv{w}_2$ and $\vv{w}_3$ we obtain:
    \begin{align*}
        X_2 &= \{ \vv{w}_2, \vv{a}_4, \ldots, \vv{a}_{10} \} \in E(S), \\
        X_3 &= \{ \vv{w}_3, \vv{a}_3, \vv{a}_4, \vv{a}_8 \} \in E(S).
    \end{align*}
    Another element of $E(S)$ is derived by adding the expressions above for $\vv{w}_1$ and $\vv{w}_2$. Since $\vv{w}_1 + \vv{w}_2 = \vv{a}_1 + \vv{a}_2 + \vv{a}_3 + \vv{a}_8 + \vv{a}_9 + \vv{a}_{10}$, we have
    \[
        \{ \vv{w}_1, \vv{w}_2, \vv{a}_1, \vv{a}_2, \vv{a}_3, \vv{a}_8, \vv{a}_9, \vv{a}_{10} \} \in E(S).
    \]
    We will prove in Section~\ref{sec:zero-sums} that all elements of $E(S)$ are obtained by combining the expressions for $\vv{w}_1$, $\vv{w}_2$, and $\vv{w}_3$ in this way.

    For a more concrete version of this example, one could choose explicit vectors for $B$, such as the standard basis for $\F_2^9$ along with the zero vector, and then compute the other vectors. For convenience, we write these vectors as bitstings.
    \begin{align*}
        \vv{a}_1 &= 100 \, 000 \, 000 & \vv{a}_8 &= 000 \, 000 \, 010 \\
        \vv{a}_2 &= 010 \, 000 \, 000 & \vv{a}_9 &= 000 \, 000 \, 001 \\
        \vv{a}_3 &= 001 \, 000 \, 000 & \vv{a}_{10} &= 000 \, 000 \, 000 \\
        \vv{a}_4 &= 000 \, 100 \, 000 & \\
        \vv{a}_5 &= 000 \, 010 \, 000 & \vv{w}_1 &= 111 \, 111 \, 100 \\
        \vv{a}_6 &= 000 \, 001 \, 000 & \vv{w}_2 &= 000 \, 111 \, 111 \\
        \vv{a}_7 &= 000 \, 000 \, 100 & \vv{w}_3 &= 001 \, 100 \, 010
    \end{align*}
\end{example}

Recall that any map $\func{f}{X}{Y}$ between sets induces a map $\func{f_{\ast}}{\pow{X}}{\pow{Y}}$.

\begin{proposition} \label{prop:preserves_even_zero-sums}
    A function $\func{f}{\F_2^n}{\F_2^m}$ is affine if and only if $f_{\ast}(E(\F_2^n)) \subseteq E(\F_2^m)$.
\end{proposition}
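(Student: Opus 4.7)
The plan hinges on two $\F_2$-linear identities for $f_\ast$ on $\pow{\F_2^n}$ (viewed as an $\F_2$-vector space under symmetric difference). I would first establish that for any $X \in \pow{\F_2^n}$,
\begin{align*}
    \abs{f_\ast(X)} &\equiv \abs{X} \pmod{2}, \\
    \sum_{\vv{t} \in f_\ast(X)} \vv{t} &= \sum_{\vv{x} \in X} f(\vv{x}).
\end{align*}
Both sides of each identity are $\F_2$-linear in $X$: cardinality parity and vector-sum are additive under symmetric difference, and $f_\ast$ is linear by construction. Since both sides agree on singletons, they agree on all of $\pow{\F_2^n}$.

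For the forward direction, I would write $f(\vv{x}) = L(\vv{x}) + \vv{c}$ for a linear $L$, then for any $X \in E(\F_2^n)$ invoke the two identities: the first forces $\abs{f_\ast(X)}$ to be even, and the second gives
\[
    \sum_{\vv{t} \in f_\ast(X)} \vv{t} = \sum_{\vv{x} \in X}\bigl(L(\vv{x}) + \vv{c}\bigr) = L\!\left(\sum_{\vv{x} \in X} \vv{x}\right) + \abs{X}\cdot \vv{c} = \vv{0},
\]
placing $f_\ast(X)$ in $E(\F_2^m)$.

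For the converse, I would reduce proving $f$ affine to showing that $g(\vv{x}) := f(\vv{x}) + f(\vv{0})$ is $\F_2$-linear, which over $\F_2$ amounts to the identity $f(\vv{0}) + f(\vv{x}) + f(\vv{y}) + f(\vv{x}+\vv{y}) = \vv{0}$ for all $\vv{x}, \vv{y} \in \F_2^n$. The key step is the observation that whenever $\vv{x}, \vv{y}$ are nonzero and distinct, the four vectors $\vv{0}, \vv{x}, \vv{y}, \vv{x}+\vv{y}$ are pairwise distinct and sum to $\vv{0}$, so $X := \{\vv{0}, \vv{x}, \vv{y}, \vv{x}+\vv{y}\}$ is a four-element member of $E(\F_2^n)$. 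The hypothesis places $f_\ast(X)$ in $E(\F_2^m)$, and the sum identity converts this into the displayed equation. The remaining cases $\vv{x} = \vv{0}$, $\vv{y} = \vv{0}$, or $\vv{x} = \vv{y}$ each reduce the equation to a trivial tautology.

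The main obstacle is pinning down the two $\F_2$-linear identities for $f_\ast$, since they rest on treating $\pow{\cdot}$ as an $\F_2$-vector space and $f_\ast$ as its induced linear map rather than as the set-theoretic image; once that is in place, both directions reduce to one-line computations, with only the brief case analysis in the converse needing care.
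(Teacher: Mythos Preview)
Your argument hinges on $f_\ast$ being $\F_2$-linear, which you secure by redefining $f_\ast$ as the linear extension of $\{x\}\mapsto\{f(x)\}$ rather than the set-theoretic direct image the paper uses throughout. These two maps coincide only when $f$ is injective. When it is not, your identities already fail on two-element sets: if $f(a)=f(b)$ with $a\neq b$, the direct image $f_\ast(\{a,b\})=\{f(a)\}$ has odd cardinality, breaking the parity identity, and its element-sum is $f(a)$ rather than $f(a)+f(b)=0$, breaking the second. So the proof as written does not establish the proposition in the paper's sense; you have flagged the reinterpretation as ``the main obstacle,'' but it is really a change of statement.

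This is not merely your problem: with the direct-image convention the forward implication is actually false. The zero map $f\equiv\vv{0}$ is affine, yet for any nonempty $X\in E(\F_2^n)$ (with $n\geq 2$) one has $f_\ast(X)=\{\vv{0}\}$, a one-element set, hence not in $E(\F_2^m)$. The paper's own proof silently assumes that $f(\vv{y}),f(\vv{x}_1),\ldots,f(\vv{x}_{2\ell+1})$ are distinct. Both arguments become correct once $f$ is injective, which is the only case invoked downstream (the next theorem applies the proposition to an automorphism and its inverse). Your linear-extension reading does yield a clean, globally true variant, and your four-point converse is tidier than the paper's odd-sum bookkeeping, but it is a different proposition from the one printed.
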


\begin{proof}
    Let $\func{f}{\F_2^n}{\F_2^m}$. Note that $f_{\ast}(\varnothing) = \varnothing \in E(\F_2^m)$. By definition $f$ is affine if and only if it preserves affine combinations. Recall that the affine combinations in $\F_2^n$ are precisely the sums of an odd number of points.

    Let $\vv{x}_1, \ldots, \vv{x}_{2\ell+1} \in  \F_2^n$ be distinct, and let $\vv{y} = \vv{x}_1 + \cdots + \vv{x}_{2\ell+1}$. Observe that $X = \{\vv{y}, \vv{x}_1, \ldots, \vv{x}_{2\ell+1}\} \in E(\F_2^n)$.
    Then
    \[
        \begin{array}{l}
            f(\vv{x}_1 + \cdots + \vv{x}_{2\ell+1})
            = f(\vv{x}_1) + \cdots + f(\vv{x}_{2\ell+1}) \\[1ex]
            \phantom{mmm} \iff
            f(\vv{y})
            = f(\vv{x}_1) + \cdots + f(\vv{x}_{2\ell+1}) \\[1ex]
            \phantom{mmm} \iff
            f_{\ast} (X) = \left\{ f(\vv{y}), f(\vv{x}_1), \ldots, f(\vv{x}_{2\ell+1}) \right\} \in E(\F_2^m).
        \end{array}
    \]
    Since every nonempty element of $E(\F_2^n)$ corresponds to an affine combination, this proves the proposition.
\end{proof}

\begin{theorem} \label{thm:preserve-zero-sums}
    Two sets $S, T \subseteq \F_2^n$ are affinely equivalent if and only if there is a bijection $\func{g}{S}{T}$ such that $g_{\ast}|_{E(S)}$ is a bijection onto $E(T)$.
\end{theorem}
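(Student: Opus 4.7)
The plan is to handle each direction separately, with the forward direction being a fairly direct consequence of Proposition~\ref{prop:preserves_even_zero-sums} and the reverse direction requiring an explicit construction of an affine automorphism extending $g$.

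For the forward direction, suppose $f$ is an affine automorphism of $\F_2^n$ with $f(S) = T$, and let $g = f|_S$. For any $X \in E(S)$, we have $X \subseteq S \subseteq \F_2^n$ and $X \in E(\F_2^n)$, so by Proposition~\ref{prop:preserves_even_zero-sums} $f_{\ast}(X) \in E(\F_2^n)$; since $f_{\ast}(X) = g_{\ast}(X) \subseteq f(S) = T$, we conclude $g_{\ast}(X) \in E(T)$. Applying the same argument to the affine automorphism $f^{-1}$ shows $(g^{-1})_{\ast}$ maps $E(T)$ into $E(S)$. Since $(g^{-1})_{\ast}$ and $g_{\ast}$ are mutually inverse on power sets, this gives the required bijection.

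For the reverse direction, suppose $g\colon S \to T$ is a bijection with $g_{\ast}|_{E(S)}$ a bijection onto $E(T)$. Choose a basis $B$ of $S$. The key technical step is to prove $g(B)$ is a basis of $T$. First, if $g(B)$ were affinely dependent, then some nonempty even subset $Y \subseteq g(B)$ would sum to $\vv{0}$, so $Y \in E(T)$. Since $g_{\ast}|_{E(S)}$ surjects onto $E(T)$, there is $X \in E(S)$ with $g_{\ast}(X) = Y$; bijectivity of $g$ forces $X = g^{-1}(Y) \subseteq B$, contradicting affine independence of $B$. Hence $g(B)$ is affinely independent. Second, for any $t \in T$, write $g^{-1}(t) = \vv{b}_1 + \cdots + \vv{b}_{2k+1}$ as an affine combination of distinct elements of $B$. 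Then $\{g^{-1}(t), \vv{b}_1, \ldots, \vv{b}_{2k+1}\} \in E(S)$, so its image $\{t, g(\vv{b}_1), \ldots, g(\vv{b}_{2k+1})\}$ lies in $E(T)$, meaning $t = g(\vv{b}_1) + \cdots + g(\vv{b}_{2k+1}) \in \aff(g(B))$. Thus $T \subseteq \aff(g(B))$, and combined with $g(B) \subseteq T$ this yields $\aff(g(B)) = \aff(T)$.

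To finish, extend $B$ to an affine basis $B \sqcup B_0$ of $\F_2^n$, and extend $g(B)$ to an affine basis $g(B) \sqcup B_0'$ of $\F_2^n$ (possible since $g(B)$ is affinely independent in $\F_2^n$); choose any bijection $B_0 \to B_0'$. Then define $f$ on $B \sqcup B_0$ by $g$ on $B$ and the chosen bijection on $B_0$. Standard affine linear algebra ensures a unique affine automorphism $\tilde{f}$ of $\F_2^n$ agrees with $f$ on this basis. The computation from the previous paragraph, now read with $\tilde{f}$ in place of the map defined through $g$, shows $\tilde{f}(s) = g(s)$ for every $s \in S$, hence $\tilde{f}(S) = T$.

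The main obstacle I expect is the affine independence argument for $g(B)$: one must carefully use the fact that $g$ itself is a set-level bijection (so preimages of subsets of $g(B)$ lie in $B$) in order to extract a contradiction inside $B$ rather than merely inside $S$. The rest is assembling Proposition~\ref{prop:preserves_even_zero-sums} and routine affine-extension facts.
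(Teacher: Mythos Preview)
Your proposal is correct and follows essentially the same route as the paper's proof: Proposition~\ref{prop:preserves_even_zero-sums} handles the forward direction, and for the converse you show $g(B)$ is affinely independent, extend both $B$ and $g(B)$ to affine bases of $\F_2^n$, and verify the resulting affine automorphism restricts to $g$ on $S$ via the zero-sum hypothesis. The only difference is that you additionally argue $g(B)$ spans $T$, which the paper omits since it is not needed once one extends to bases of the whole space; this extra step is redundant but harmless.
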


\begin{proof}
    Let $S, T  \subseteq \F_2^n$.

    Suppose $S$ and $T$ are affinely equivalent. Then there is an affine automorphism $f$ of $\F_2^n$ such that $f(S) = T$. Define $\func{g}{S}{T}$ by setting $g = f|_{S}$. Since $f$ is injective, we know $g$ is also injective, and because $g(S) = f(S) = T$ we also know $g$ is surjective. Hence $g$ is bijective. Note that $g^{-1} = (f^{-1})|_{T}$.

    By Proposition~\ref{prop:preserves_even_zero-sums}, we know that $f$ and $f^{-1}$ preserve even zero-sums, and hence so do $g$ and $g^{-1}$. Therefore $g_{\ast}|_{E(S)}$ is bijective onto $E(T)$.

    \bigskip

    Suppose there is a bijective function $\func{g}{S}{T}$ such that $g_{\ast}|_{E(S)}$ is a bijection onto $E(T)$. Let $B = \{\vv{x}_0, \ldots, \vv{x}_d\} \subseteq S$ be a basis for $S$. We will show that $g_{\ast}(B) \subseteq T$ is affinely independent.
        
    Let $X$ be an even zero-sum subset of $g_{\ast}(B)$. By assumption we have $g_{\ast}^{-1}(X) \in E(S)$, so $g_{\ast}^{-1}(X)$ is an even zero-sum set in the independent set $B$, so we know that $g_{\ast}^{-1}(X) = \emptyset$. Since $g$ is bijective, this implies that $X = \emptyset$. Therefore $g_{\ast}(B)$ contains no nonempty even zero-sum sets, so by the same theorem just cited, it is affinely independent.
 
    Let $\vv{y}_i = g(\vv{x}_i)$ for $0 \leq i \leq d$. The affinely independent sets $B$ and $g_{\ast}(B)$ can each be extended to affine bases
    \[
        \{ \vv{x}_0, \ldots, \vv{x}_d, \ldots, \vv{x}_n \}
        \quad \text{and} \quad
        \{ \vv{y}_0, \ldots, \vv{y}_d, \ldots, \vv{y}_n \}
    \]
    for $\F_2^n$.
    Then the function $\func{f}{\F_2^n}{\F_2^n}$ defined by setting $f(\vv{x}_i) = \vv{y}_i$ for $0 \leq i \leq n$ and extending affine linearly is an affine automorphism.

    We now show that $f(S) = T$, which will complete the proof that $S$ and $T$ are affinely equivalent. Let $\vv{x} \in S$. Since $B$ is a basis for $S$, there are $\vv{x}_{i_1}, \ldots, \vv{x}_{i_{2j+1}} \in B$ whose sum is $\vv{x}$. Then
    \[
        X = \{\vv{x}, \vv{x}_{i_1}, \ldots, \vv{x}_{i_{2j+1}} \} \in E(S).
    \]
    By assumption
    \[
        g_{\ast}(X) = \{g(\vv{x}), g(\vv{x}_{i_1}), \ldots, g(\vv{x}_{i_{2j+1}}) \} \in E(T),
    \]
    so
    \begin{align*}
        g(\vv{x})
        &= g(\vv{x}_{i_1}) + \cdots + g(\vv{x}_{i_{2j+1}}) \\
        &= f(\vv{x}_{i_1}) + \cdots + f(\vv{x}_{i_{2j+1}}) & (\text{since $f|_B = g|_B$}) \\
        &= f(\vv{x}_{i_1} + \cdots + \vv{x}_{i_{2j+1}}) & (\text{since $f$ is affine}) \\
        &= f(\vv{s}).
    \end{align*}
    Hence $f|_S = g|_S$, so $f(S) = g(S) = T$.

\end{proof}




\section{Venn diagrams and regions} \label{sec:venn-regions}

Let $S$ be any set. It is well-known that the power set $\pow{S}$ of $S$ has the structure of a \deff{Boolean ring} with addition given by the \deff{symmetric difference} $\symdif$, defined by 
\[
    X \symdif Y
    = (X \cup Y) - (X \cap Y)
\]
for $X, Y \in \pow{S}$, and multiplication given by set intersection. The zero element is the empty set and the unity element is $S$. The symmetric difference is both commutative and associative, and intersection distributes over symmetric difference.

The following two lemmas are standard results about $\symdif$.

\begin{lemma} \label{lem:sym-dif}
    Let $S$ be a set, and let $X_1, \ldots, X_r$ be a finite collection of subsets of $S$. For any $x \in S$, $x \in X_1 \symdif \cdots \symdif X_r$ if and only if $x \in X_i$ for exactly an odd number of $i$, $1 \leq i \leq r$.
\end{lemma}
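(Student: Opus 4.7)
The plan is to prove this by induction on $r$, leveraging the associativity of $\symdif$ and the definition of the symmetric difference of two sets.

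For the base case I would take $r = 1$, where $X_1 \symdif \cdots \symdif X_r = X_1$ and the statement reduces to the tautology that $x \in X_1$ if and only if $x$ belongs to exactly one (an odd number) of the sets in the one-element collection $\{X_1\}$. I would also explicitly verify $r = 2$ as a stepping stone, since the inductive step will pivot on it: the definition $X_1 \symdif X_2 = (X_1 \cup X_2) \setminus (X_1 \cap X_2)$ tells us directly that $x \in X_1 \symdif X_2$ iff $x$ lies in exactly one of $X_1, X_2$.

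For the inductive step, I would assume the claim holds for any collection of $r \geq 2$ sets and consider a collection $X_1, \ldots, X_{r+1}$. By associativity,
\[
    X_1 \symdif \cdots \symdif X_{r+1} = (X_1 \symdif \cdots \symdif X_r) \symdif X_{r+1},
\]
so the $r = 2$ case applies: $x$ lies in the left-hand side iff it belongs to exactly one of $X_1 \symdif \cdots \symdif X_r$ and $X_{r+1}$. A quick case analysis finishes the argument. If $x \in X_1 \symdif \cdots \symdif X_r$ and $x \notin X_{r+1}$, then by the inductive hypothesis $x$ lies in an odd number of $X_1, \ldots, X_r$ and not in $X_{r+1}$, so an odd number of $X_1, \ldots, X_{r+1}$. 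Symmetrically, if $x \notin X_1 \symdif \cdots \symdif X_r$ and $x \in X_{r+1}$, then $x$ lies in an even number of $X_1, \ldots, X_r$ plus $X_{r+1}$, again an odd total. The converse direction, namely that $x$ lying in an odd number of $X_1, \ldots, X_{r+1}$ forces $x$ to land in exactly one of the two sets above, follows by the same bookkeeping run in reverse.

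I do not anticipate any serious obstacle here, since the result is essentially a counting parity argument packaged inside an induction. The only mild care point is being precise about the base case and invoking associativity cleanly, so that the $r = 2$ equivalence can be reused in the inductive step without hand-waving.
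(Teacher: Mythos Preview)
Your induction argument is correct and is the standard way to prove this parity characterization of the iterated symmetric difference. The paper itself does not give a proof of this lemma, simply labeling it a ``standard result about $\symdif$,'' so your write-up is exactly the kind of routine verification the authors are implicitly deferring to the reader.
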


\begin{lemma} \label{symdif-preserved}
    Let $\func{f}{S}{T}$ be an injective function between sets $S$ and $T$. Then $f(A \symdif B) = f(A) \symdif f(B)$ for all $A, B \subseteq S$.
\end{lemma}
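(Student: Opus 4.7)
The plan is to prove both inclusions $f(A \symdif B) \subseteq f(A) \symdif f(B)$ and $f(A) \symdif f(B) \subseteq f(A \symdif B)$ by chasing elements, invoking Lemma~\ref{lem:sym-dif} (with $r = 2$) to reduce membership in a symmetric difference to the statement ``belongs to exactly one of the two sets.'' Injectivity will be used exactly in the places where one needs to rule out that an element can simultaneously lie in two preimages.

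For the forward inclusion, I would take $y \in f(A \symdif B)$, write $y = f(x)$ for some $x \in A \symdif B$, and assume without loss of generality that $x \in A$ and $x \notin B$. Then clearly $y \in f(A)$. To show $y \notin f(B)$, I would suppose for contradiction that $y = f(x')$ for some $x' \in B$; injectivity of $f$ then forces $x = x'$, contradicting $x \notin B$. Hence $y \in f(A) \symdif f(B)$.

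For the reverse inclusion, I would take $y \in f(A) \symdif f(B)$ and again assume without loss of generality $y \in f(A)$ and $y \notin f(B)$. Write $y = f(x)$ with $x \in A$. If $x$ were also in $B$, then $y = f(x) \in f(B)$, a contradiction, so $x \in A \setminus B \subseteq A \symdif B$ and therefore $y \in f(A \symdif B)$.

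I do not expect any genuine obstacle here: the only subtle point is noticing that set-theoretic image does not preserve intersections or differences for arbitrary functions, so one cannot simply write $f((A \cup B) - (A \cap B)) = (f(A) \cup f(B)) - (f(A) \cap f(B))$ without justification. Injectivity is precisely what makes this work, and the element-chase above isolates the two spots where it is needed.
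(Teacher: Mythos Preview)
Your argument is correct. The paper itself does not prove this lemma at all; it is stated (together with Lemma~\ref{lem:sym-dif}) as a ``standard result about $\symdif$'' and left without proof, so there is nothing to compare against beyond noting that your element-chase is exactly the kind of routine verification the authors had in mind. One small remark: injectivity is actually needed only in the forward inclusion (to rule out $y=f(x')$ with $x'\in B$); the reverse inclusion $f(A)\symdif f(B)\subseteq f(A\symdif B)$ holds for any function, since $f(A)-f(B)\subseteq f(A-B)$ requires no hypothesis on $f$.
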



In this paper, we focus on $\pow{S}$ as a vector space over $\F_2$.

\begin{proposition} \label{prop:bases}
    Let $S$ be a nonempty set of size $k$. Then the singleton subsets of $S$ form a basis for $\pow{S}$ and hence $\dim \pow{S} = k$.
\end{proposition}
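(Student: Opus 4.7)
The plan is to verify the two defining properties of a basis directly, using Lemma~\ref{lem:sym-dif} as the main tool in each case. Let $S = \{x_1, \ldots, x_k\}$, and denote the singleton subsets by $\{x_1\}, \ldots, \{x_k\} \in \pow{S}$.

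First I would establish spanning. Given any nonempty $A = \{x_{i_1}, \ldots, x_{i_m}\} \subseteq S$, I claim
\[
    A = \{x_{i_1}\} \symdif \{x_{i_2}\} \symdif \cdots \symdif \{x_{i_m}\}.
\]
This is immediate from Lemma~\ref{lem:sym-dif}: a point $x \in S$ lies in the right-hand side if and only if it lies in an odd number of the singletons $\{x_{i_1}\}, \ldots, \{x_{i_m}\}$, which happens precisely when $x \in A$ (in which case $x$ lies in exactly one of them). The empty set, being the zero of $\pow{S}$, is the empty linear combination. Hence the singletons span $\pow{S}$.

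Next I would establish linear independence. Suppose that
\[
    \{x_{i_1}\} \symdif \{x_{i_2}\} \symdif \cdots \symdif \{x_{i_r}\} = \varnothing
\]
for some distinct indices $i_1, \ldots, i_r$. Applying Lemma~\ref{lem:sym-dif} again, each $x_{i_j}$ belongs to exactly one of the singletons appearing on the left, so $x_{i_j}$ would lie in the symmetric difference, contradicting the equality with $\varnothing$ unless $r = 0$. Thus no nontrivial $\F_2$-linear combination of the singletons is zero.

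Having shown that the $k$ singletons form a basis, the dimension statement $\dim \pow{S} = k$ follows immediately. There is no real obstacle here; the argument is entirely a matter of unpacking the definition of $\symdif$ via Lemma~\ref{lem:sym-dif}. The only mild subtlety is the bookkeeping convention that the empty symmetric difference equals $\varnothing$, which is needed so that the empty subset $\varnothing \in \pow{S}$ is also expressed as a (trivial) combination of singletons.
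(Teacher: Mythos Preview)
Your proof is correct and follows essentially the same approach as the paper: show spanning by writing each $A \in \pow{S}$ as the symmetric difference of its singletons, and show independence by observing that a nonempty symmetric difference of distinct singletons is the nonempty set they comprise. The paper states these facts directly rather than invoking Lemma~\ref{lem:sym-dif}, but the content is the same.
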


\begin{proof} \leavevmode
    The singleton subsets of $S$ span $\pow{S}$, because for any $X \in \pow{S}$ we have
    \[
        X = \bigsymdif_{x \in X} \{x\}.
    \]
    Also, for any $x_1, \ldots, x_r \in S$ we have
    \[
        \bigsymdif_{i=1}^r \{x_i\}
        = \{x_1, \ldots, x_r\} \neq \varnothing,
    \]
    so the singleton subsets are linearly independent.
    Thus the singleton subsets form a basis for $\pow{S}$, so $\dim \pow{S} = |S| = k$.     
\end{proof}

\begin{definition}
    Let $S$ be a finite set and let $\mathcal{V} = \{X_1, \ldots, X_r\} \subseteq \pow{S}$. The \deff{Venn regions} of $S$ with respect to $\mathcal{V}$ are the sets
    \[
        R_I = \left( \bigcap_{i \in I} X_i \right) \cap \left( \bigcap_{i \notin I} X_i^{\comp} \right)
    \]
    for $I \subseteq \{1, \ldots, r\}$, where $X_i^{\comp} = S - X_i$.
    The set
    \[
        R_{\varnothing} = X_1^{\comp} \cap \cdots \cap X_r^{\comp}
    \]
    is the \deff{isolated point set} for $\mathcal{V}$.

    If $x, y \in S$ are contained in the same Venn region, then we write $x \sim_{\mathcal{V}} y$.
\end{definition}

\begin{remark}
    Observe that the Venn regions defined above are exactly the connected components of a Venn diagram for the sets $X_1, \ldots, X_r$. The isolated point set is the region outside of all of these sets, $X_1^{\comp} \cap \cdots X_r^{\comp}$.
    The nonempty Venn regions are pairwise disjoint and partition $S$, and thus $\sim_{\mathcal{V}}$ is an equivalence relation.
    Hence the quotient set $S / {\sim_{\mathcal{V}}}$ is the set of nonempty Venn regions.
    Note also that $x \sim_{\mathcal{V}} y$ if and only if $x$ and $y$ are contained in exactly the same elements of $\mathcal{V}$.
\end{remark}

In the following lemma, we show that each set in $\mathcal{V}$ is a union of Venn regions. Later, in Proposition~\ref{prop:intersect-and-union}, we will also prove that if $\mathcal{V}$ is a linear subspace, then each Venn region is an intersection of sets in $\mathcal{V}$.

\begin{lemma} \label{lem:venn-containment}
    Let $S$ be a finite set and let $\mathcal{V} = \{X_1, \ldots, X_r\} \subseteq \pow{S}$.
    \begin{enumerate}
        \item For any $X_i \in \mathcal{V}$ and Venn region $R$ from $\mathcal{V}$, either $R \cap X_i = \varnothing$ or $R \subseteq X_i$.
        \item Each $X_i \in \mathcal{V}$ is a union of Venn regions from $\mathcal{V}$.
    \end{enumerate}
\end{lemma}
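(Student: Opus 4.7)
The plan is to proceed directly from the definition of the Venn regions $R_I$, and to exploit the fact (recorded in the remark preceding the lemma) that the nonempty Venn regions partition $S$.

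For part (1), I would fix $X_i \in \mathcal{V}$ and a Venn region $R = R_I$, and split into two cases according to whether $i \in I$ or $i \notin I$. If $i \in I$, then $R_I \subseteq \bigcap_{j \in I} X_j \subseteq X_i$ immediately from the definition, which gives the second alternative $R \subseteq X_i$. If $i \notin I$, then $R_I \subseteq \bigcap_{j \notin I} X_j^{\comp} \subseteq X_i^{\comp}$, so $R \cap X_i = \varnothing$, giving the first alternative. This is purely a bookkeeping argument unpacking the definition of $R_I$, so there is no real obstacle here.

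For part (2), I would argue that $X_i = \bigcup_{I \ni i} R_I$. The containment $\bigcup_{I \ni i} R_I \subseteq X_i$ follows from part (1), since each such $R_I$ satisfies $R_I \subseteq X_i$. For the reverse containment, let $x \in X_i$. By the remark, the nonempty Venn regions partition $S$, so $x$ lies in exactly one Venn region $R_I$. Since $x \in R_I \cap X_i$, part (1) forces $R_I \subseteq X_i$, and in particular we must have $i \in I$ (otherwise $R_I \subseteq X_i^{\comp}$ would contradict $x \in X_i$). Thus $x$ is contained in a Venn region of the form $R_I$ with $i \in I$, proving the reverse containment. Therefore $X_i$ is the union of the Venn regions indexed by the subsets $I$ containing $i$.

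The main (mild) conceptual point is recognizing that part (1) is exactly what is needed to show that $X_i$ cannot "cut through" a Venn region, and once part (1) is in hand part (2) is essentially a consequence of the partition property of the Venn regions. No computation beyond unwinding the definition is required, so I would expect the proof to fit in just a few lines.
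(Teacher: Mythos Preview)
Your proposal is correct. For part~(1) you argue by a case split on whether $i \in I$ directly from the defining formula for $R_I$, whereas the paper instead assumes $R \cap X_i \neq \varnothing$, picks a witness $\vv{x} \in R \cap X_i$, and uses the equivalence-relation description $\sim_{\mathcal{V}}$ (any $\vv{r} \in R$ satisfies $\vv{r} \sim_{\mathcal{V}} \vv{x}$, hence lies in the same $X_j$'s as $\vv{x}$, so $\vv{r} \in X_i$). Your argument is slightly more elementary and yields the extra information that $R_I \subseteq X_i$ precisely when $i \in I$; the paper's argument avoids referring to the index $I$ at all and works uniformly via $\sim_{\mathcal{V}}$. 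For part~(2) the two arguments are essentially the same, though you go a bit further by identifying the union explicitly as $\bigcup_{I \ni i} R_I$, while the paper is content to note that every point of $X_i$ lies in some Venn region which, by part~(1), must then be contained in $X_i$.
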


\begin{proof}
    Let $X_i \in \mathcal{V}$ and let $R$ be a Venn region from $\mathcal{V}$. Then either $R \cap X_i = \varnothing$ or $R \cap X_i \neq \varnothing$. Assume the latter, so there exists $\vv{x} \in R \cap X_i$. Then $\vv{x} \in R$ and $\vv{x} \in X_i$. Then for any $\vv{r} \in R$ we have $\vv{x} \sim_{\mathcal{V}} \vv{r}$, so $\vv{x}$ and $\vv{r}$ are contained in exactly the same elements of $\mathcal{V}$. Since $\vv{x} \in X_i$, it follows that $\vv{r} \in X_i$. Therefore $R \subseteq X_i$.

    That each $X_i \in \mathcal{V}$ is a union of Venn regions follows from the facts that each $\vv{x} \in X_i$ is in some Venn region, and if a Venn region intersects $X_i$ at all, then it must be contained in $X_i$.
\end{proof}

Since each Venn region is determined by a subset $I \subseteq \{1, \ldots, r\}$, we can parametrize the Venn regions using coordinates in $\F_2^r$. The Venn region $R_I$ corresponds to the vector $\vv{a}_I \in \F_2^r$ defined by
\[
    \vv{a}_{I} = \sum_{i \in I} \vv{e}_i,
\]
where $\vv{e}_1, \ldots, \vv{e}_r$ are the standard basis vectors for $\F_2^r$. The set $I$ is called the \deff{support} of the vector $\vv{a}_I$. The following is an easy consequence of the definitions.

We display the vector labels for the Venn regions of Venn diagrams of two, three, and four sets in Figure~\ref{fig:venn-labels}. For convenience, we write these vectors as bitstrings.

\begin{figure}
    \centering

    \begin{tikzpicture}[set/.style={fill=cyan,fill opacity=0.1}, scale = 0.4]
    \def\firstcircle{(0:2cm) circle (3cm)}
    \def\secondcircle{(180:2cm) circle (3cm)}
        
    \draw (-5.3,-4) rectangle (5.3,4.3);
    
    \draw[set] \firstcircle;
    \draw[set] \secondcircle;

    \draw node at (4.4cm,3.4cm) {$X_2$};
    \draw node at (-4.4cm,3.4cm) {$X_1$}; 
        
        \draw node at (90:3.2cm) {$00$};    
        \draw node at (0:0cm) {$11$};
        \draw node at (180:2.7cm) {$10$};
        \draw node at (0:2.7cm) {$01$};
    \end{tikzpicture}
    \begin{tikzpicture}[set/.style={fill=cyan,fill opacity=0.1}, scale = 0.6]
        \def\firstcircle{(90:2cm) circle (3cm)}
        \def\secondcircle{(210:2cm) circle (3cm)}
        \def\thirdcircle{(330:2cm) circle (3cm)}

        \draw (-6.2cm,-4.75cm) rectangle (6.2cm,5.75cm);
        \draw[set] \firstcircle;
        \draw[set] \secondcircle;
        \draw[set] \thirdcircle;

        \draw node at (90:5.4cm) {$X_1$};    
        \draw node at (210:5.4cm) {$X_2$};    
        \draw node at (330:5.4cm) {$X_3$};    
        
            \draw node at (-5cm,4.5cm) {$000$};    
            \draw node at (90:2.7cm) {$100$};    
            \draw node at (210:2.7cm) {$010$};
            \draw node at (330:2.7cm) {$001$};
            \draw node at (0,0) {$111$};
            \draw node at (270:2cm) {$011$};
            \draw node at (30:2cm) {$101$};
            \draw node at (150:2cm) {$110$};
    \end{tikzpicture}

    \begin{tikzpicture}[set/.style={fill=cyan,fill opacity=0.1}, scale=2]
 
    \draw[set,
        rotate =45] (0,0) ellipse (2cm and 1cm);
 
    \draw[set,
        rotate =-45] (0,0) ellipse (2cm and 1cm);
 
    \draw[set,
        xshift=1cm,
        yshift=-0.705cm,
        rotate =45] (0,0) ellipse (2cm and 1cm);
 
    \draw[set,
        xshift=-1cm,
        yshift=-0.705cm,
        rotate =-45,] (0,0) ellipse (2cm and 1cm);

    \draw node at (163:2.75) {$X_1$};
    \draw node at (135:2.2) {$X_2$};
    \draw node at (45:2.2) {$X_3$};
    \draw node at (17:2.75) {$X_4$};
 
    \draw node at (90:1.75) {$0000$};
    \draw node at (90:0.5) {$0110$};
    \draw node at (270:0.75) {$1111$};
    \draw node at (270:1.75) {$1001$};

    \draw node at (175:2) {$1000$};
    \draw node at (5:2) {$0001$};

    \draw node at (130:1.3) {$0100$};
    \draw node at (50:1.3) {$0010$};

    \draw node at (160:1.3) {$1100$};
    \draw node at (20:1.3) {$0011$};

    \draw node at (190:0.8) {$1110$};
    \draw node at (350:0.8) {$0111$};

    \draw node at (225:1.5) {$1010$};
    \draw node at (305:1.5) {$0101$};

    \draw node at (255:1.3) {$1011$};
    \draw node at (285:1.3) {$1101$};

    \draw (-3,-2.4) rectangle (3,2);
    
    \end{tikzpicture}

    \caption{Labels of Venn regions by bitstrings for Venn diagrams of two, three, and four sets.}
    \label{fig:venn-labels}
\end{figure}
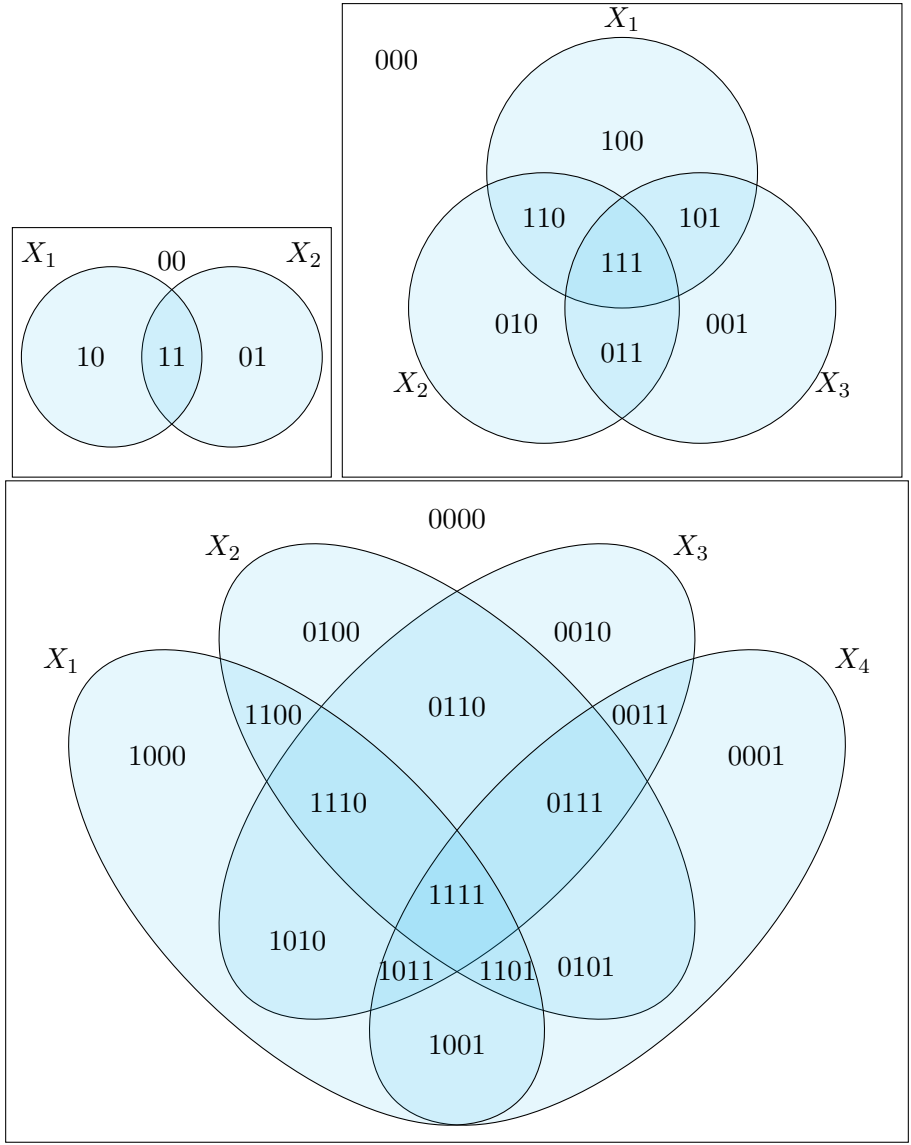

\begin{remark}
    For the remainder of this paper, we will focus on the case where $\mathcal{V} \subseteq \pow{S}$ is a linear subspace. We will denote this condition by $\mathcal{V} \leq \pow{S}$.
\end{remark}

\begin{lemma} \label{lem:basis-is-enough}
    Let $S$ be a finite set, let $\mathcal{V} \leq \pow{S}$, and let $\mathcal{U} \subseteq \mathcal{V}$ be a spanning set for $\mathcal{V}$. Then
    \[
        S / {\sim_{\mathcal{U}}} = S / {\sim_{\mathcal{V}}}.
    \]
\end{lemma}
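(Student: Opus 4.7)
The plan is to prove the stronger statement that the equivalence relations $\sim_{\mathcal{U}}$ and $\sim_{\mathcal{V}}$ on $S$ actually coincide, from which the equality of the quotient sets follows at once. Recall that by the remark following the definition of Venn regions, $x \sim_{\mathcal{W}} y$ holds precisely when $x$ and $y$ lie in exactly the same elements of $\mathcal{W}$; thus the problem reduces to a membership question for sets in $\mathcal{V}$ versus sets in $\mathcal{U}$.

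First I would dispatch the easy direction. Because $\mathcal{U} \subseteq \mathcal{V}$, any two points that agree on their membership across all of $\mathcal{V}$ certainly agree on their membership across all of $\mathcal{U}$, so $x \sim_{\mathcal{V}} y$ implies $x \sim_{\mathcal{U}} y$.

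The substantive direction is the converse: $x \sim_{\mathcal{U}} y \implies x \sim_{\mathcal{V}} y$. Here is where I would use the hypothesis that $\mathcal{U}$ spans $\mathcal{V}$. Fix an arbitrary $X \in \mathcal{V}$; by the spanning assumption, there exist $U_1, \ldots, U_r \in \mathcal{U}$ with $X = U_1 \symdif \cdots \symdif U_r$ (the case $X = \varnothing$ being trivial since $x, y \in \varnothing$ is vacuously equivalent). Applying Lemma~\ref{lem:sym-dif}, $x \in X$ if and only if $x$ belongs to an odd number of the $U_i$, and similarly for $y$. Since $x \sim_{\mathcal{U}} y$ means that for each $i$ we have $x \in U_i \iff y \in U_i$, the number of indices $i$ with $x \in U_i$ equals the number with $y \in U_i$, so these parities agree. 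Therefore $x \in X \iff y \in X$. As $X \in \mathcal{V}$ was arbitrary, we conclude $x \sim_{\mathcal{V}} y$.

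The main (minor) obstacle is simply invoking the spanning hypothesis through the symmetric difference correctly; everything then follows from Lemma~\ref{lem:sym-dif} and a parity argument. Since the two equivalence relations coincide, their sets of equivalence classes are literally the same, giving $S / {\sim_{\mathcal{U}}} = S / {\sim_{\mathcal{V}}}$ as required.
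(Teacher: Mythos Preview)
Your proof is correct and follows essentially the same approach as the paper: both show that the relations $\sim_{\mathcal{U}}$ and $\sim_{\mathcal{V}}$ coincide, handling the easy direction via $\mathcal{U} \subseteq \mathcal{V}$ and the substantive direction by writing an arbitrary element of $\mathcal{V}$ as a symmetric difference of elements of $\mathcal{U}$ and using the parity characterization of membership (your explicit citation of Lemma~\ref{lem:sym-dif}). The arguments are effectively identical.
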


\begin{proof}
    We will show that $\sim_{\mathcal{U}}$ and $\sim_{\mathcal{V}}$ are the same relation.
    Let $x,y \in S$.

    Suppose $x \sim_{\mathcal{V}} y$. Then $x$ and $y$ are contained in precisely the same elements of $\mathcal{V}$. Since $\mathcal{U} \subseteq \mathcal{V}$, it follows that $x \sim_{\mathcal{U}} y$.

    Suppose $x \sim_{\mathcal{U}} y$. Let $A \in \mathcal{V}$. Since $\mathcal{U}$ spans $\mathcal{V}$, there are sets $X_1, \ldots, X_r \in \mathcal{U}$ such that
    \[
        A = X_1 \symdif \cdots \symdif X_r.
    \]
    Since $x \sim_{\mathcal{U}} y$, we know that $x \in X_i$ if and only if $y \in X_i$ for $1 \leq i \leq r$. This implies that $x \in A$ if and only if $y \in A$. Since this statement holds for all $A \in \mathcal{V}$, it follows that $x \sim_{\mathcal{V}} y$.
\end{proof}

\begin{example} \label{running-example-3}
    Consider the Venn regions of the set
    \[
        S = \{\vv{a}_1, \ldots, \vv{a}_{10}, \vv{w}_1, \vv{w}_2, \vv{w}_3 \} \subseteq \F_2^n
    \]
    from Example~\ref{running-example-1}, with $\mathscr{X} = \{X_1, X_2, X_3\} \subseteq \pow{S}$ defined by
    \begin{align*}
        X_1 &= \{ \vv{w}_1, \vv{a}_1, \ldots, \vv{a}_7 \}, \\
        X_2 &= \{ \vv{w}_2, \vv{a}_4, \ldots, \vv{a}_{10} \}, \\
        X_3 &= \{ \vv{w}_3, \vv{a}_3, \vv{a}_4, \vv{a}_8 \}.
    \end{align*}
    Because $X_1 \symdif X_2 \neq X_3$, the set $\mathscr{X}$ is linear independent. Let $\mathcal{V} = \spann(\mathscr{X})$, so that $\mathscr{X}$ is a basis for $\mathcal{V}$.
    Then $\mathcal{V}$ consists of $\varnothing$, $X_1$, $X_2$, and $X_3$, along with
    \begin{align*}
        X_1 \symdif X_2 &= \{ \vv{w}_1, \vv{w}_2, \vv{a}_1, \vv{a}_2, \vv{a}_3, \vv{a}_8, \vv{a}_9, \vv{a}_{10} \}, \\
        X_1 \symdif X_3 &= \{ \vv{w}_1, \vv{w}_3, \vv{a}_1, \vv{a}_2, \vv{a}_5, \vv{a}_6, \vv{a}_7, \vv{a}_8 \}, \\
        X_2 \symdif X_3 &= \{ \vv{w}_2, \vv{w}_3, \vv{a}_3, \vv{a}_5, \vv{a}_6, \vv{a}_7, \vv{a}_9, \vv{a}_{10} \}, \\
        X_1 \symdif X_2 \symdif X_3 &= \{ \vv{w}_1, \vv{w}_2, \vv{w}_3, \vv{a}_1, \vv{a}_2, \vv{a}_4, \vv{a}_9, \vv{a}_{10} \}.
    \end{align*}
    In Section~\ref{sec:zero-sums} we will show that $\mathcal{V} = E(S)$.
    
    Now let $\mathscr{Y} = \{ Y_1, Y_2, Y_3 \}$, where $Y_1 = X_1 \symdif X_2$, $Y_2 = X_2 \symdif X_3$, and $Y_3 = X_1 \symdif X_2 \symdif X_3$.
    It is straightforward to check that $\mathscr{Y}$ is also a basis for $\mathcal{V}$.

    The Venn diagrams of $S$ with respect to $\mathscr{X}$ and $\mathscr{Y}$ are displayed in Figure~\ref{fig:running-venns}. To find the corresponding Venn regions, one can either compute them directly or capture them from the Venn diagrams.
    \[
    \begin{array}{rcl}
        X_1^{\comp} \cap X_2^{\comp} \cap X_3^{\comp}
        = & \varnothing & =
        Y_1^{\comp} \cap Y_2^{\comp} \cap Y_3^{\comp},
        \\[1ex]
        X_1 \cap X_2^{\comp} \cap X_3^{\comp}
        = & \{ \vv{w}_1, \vv{a}_1, \vv{a}_2 \} & =
        Y_1 \cap Y_2^{\comp} \cap Y_3,
        \\[1ex]
        X_1^{\comp} \cap X_2 \cap X_3^{\comp}
        = & \{ \vv{w}_2, \vv{a}_9, \vv{a}_{10} \} & =
        Y_1 \cap Y_2 \cap Y_3,
        \\[1ex]
        X_1^{\comp} \cap X_2^{\comp} \cap X_3
        = & \{ \vv{w}_3 \} & =
        Y_1^{\comp} \cap Y_2 \cap Y_3,
        \\[1ex]
        X_1 \cap X_2 \cap X_3^{\comp}
        = & \{ \vv{a}_5, \vv{a}_6, \vv{a}_7 \} & =
        Y_1^{\comp} \cap Y_2 \cap Y_3^{\comp},
        \\[1ex]
        X_1 \cap X_2^{\comp} \cap X_3
        = & \{ \vv{a}_3 \} & =
        Y_1 \cap Y_2 \cap Y_3^{\comp},
        \\[1ex]
        X_1^{\comp} \cap X_2 \cap X_3
        = & \{ \vv{a}_8 \} & =
        Y_1 \cap Y_2^{\comp} \cap Y_3^{\comp},
        \\[1ex]
        X_1 \cap X_2 \cap X_3
        = & \{ \vv{a}_4 \} & =
        Y_1^{\comp} \cap Y_2^{\comp} \cap Y_3.
    \end{array}
    \]

    Observe that the elements of $S$ are in different positions in the two Venn diagrams, but the grouping of elements of $S$ into Venn regions is the same for both bases.
    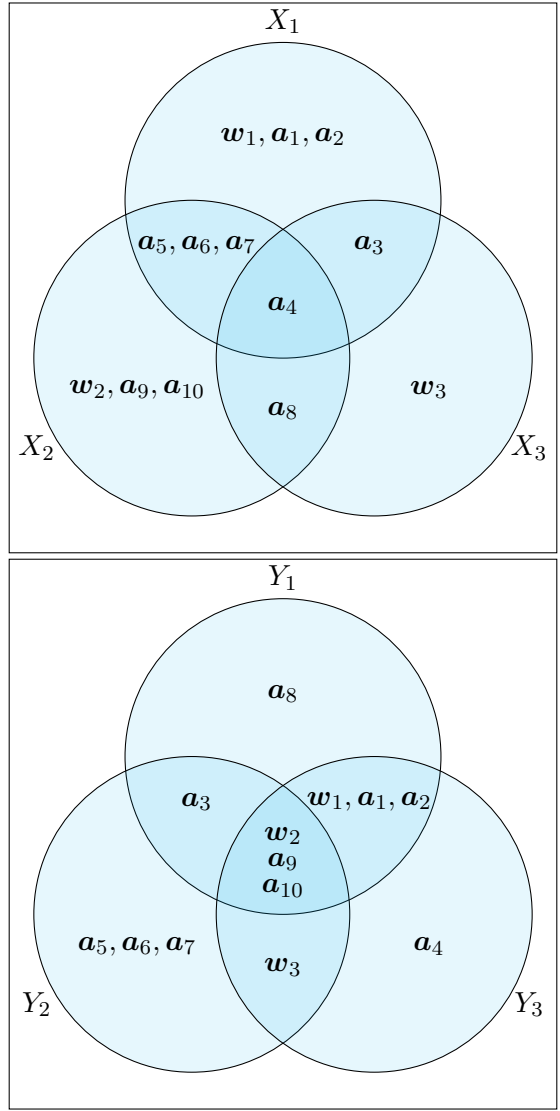
\begin{figure}
        \centering
        \begin{tikzpicture}[set/.style={fill=cyan,fill opacity=0.1}, scale = 0.7]
            \def\firstcircle{(90:2cm) circle (3cm)}
            \def\secondcircle{(210:2cm) circle (3cm)}
            \def\thirdcircle{(330:2cm) circle (3cm)}

            \draw (-5.2cm,-4.7cm) rectangle (5.2cm,5.75cm);
            \draw[set] \firstcircle;
            \draw[set] \secondcircle;
            \draw[set] \thirdcircle;

            \draw node at (90:5.4cm) {$X_1$};    
            \draw node at (210:5.4cm) {$X_2$};    
            \draw node at (330:5.4cm) {$X_3$};    
        
                \draw node at (90:3.2cm) {$\vv{w}_1, \vv{a}_1, \vv{a}_2$};    
                \draw node at (210:3.2cm) {$\vv{w}_2, \vv{a}_9, \vv{a}_{10}$};
                \draw node at (330:3.2cm) {$\vv{w}_3$};
                \draw node at (0,0) {$\vv{a}_4$};
                \draw node at (270:2cm) {$\vv{a}_8$};
                \draw node at (35:2cm) {$\vv{a}_3$};
                \draw node at (145:2cm) {$\vv{a}_5, \vv{a}_6, \vv{a}_7$};
        \end{tikzpicture}
        \begin{tikzpicture}[set/.style={fill=cyan,fill opacity=0.1}, scale = 0.7]
            \def\firstcircle{(90:2cm) circle (3cm)}
            \def\secondcircle{(210:2cm) circle (3cm)}
            \def\thirdcircle{(330:2cm) circle (3cm)}

            \draw (-5.2cm,-4.7cm) rectangle (5.2cm,5.75cm);
            \draw[set] \firstcircle;
            \draw[set] \secondcircle;
            \draw[set] \thirdcircle;

            \draw node at (90:5.4cm) {$Y_1$};    
            \draw node at (210:5.4cm) {$Y_2$};    
            \draw node at (330:5.4cm) {$Y_3$};    
        
                \draw node at (90:3.2cm) {$\vv{a}_8$};    
                \draw node at (210:3.2cm) {$\vv{a}_5, \vv{a}_6, \vv{a}_7$};
                \draw node at (330:3.2cm) {$\vv{a}_4$};
                \draw node at (90:0.5cm) {$\vv{w}_2$};
                \draw node at (0,0) {$\vv{a}_9$};
                \draw node at (270:0.5cm) {$\vv{a}_{10}$};
                \draw node at (270:2cm) {$\vv{w}_3$};
                \draw node at (35:2cm) {$\vv{w}_1, \vv{a}_1, \vv{a}_2$};
                \draw node at (145:2cm) {$\vv{a}_3$};
        \end{tikzpicture}
    
        \caption{The Venn diagrams corresponding to the bases $\mathscr{X}$ and $\mathscr{Y}$ for $E(S)$ in Example~\ref{running-example-3}.}
        \label{fig:running-venns}
    \end{figure}

\end{example}

\begin{definition}
    Let $S$ be a set, let $\mathcal{V} \leq \pow{S}$, and let $\mathscr{X} = \{X_1, \ldots, X_r\}$ be a basis for $\mathcal{V}$.
    The \deff{coordinate function for $\mathcal{V}$ with respect to $\mathscr{X}$} is $\func{\zmap_{\mathscr{X}}}{\F_2^r}{\mathcal{V}}$ defined by
    \[
        \zmap_{\mathscr{X}}(\vv{a})
        = \bigsymdif_{i \in I_{\vv{a}}} X_i
    \]
    for $\vv{a} \in Z_2^r$, where $I_{\vv{a}} \subseteq \{1, \ldots, r\}$ is the support of $\vv{a}$.
    
    The set of \deff{Venn regions of $S$ with respect to $\mathscr{X}$} is denoted $\venn_{\mathscr{X}}(S, \mathcal{V})$. The \deff{coordinate function for $\venn_{\mathscr{X}}(S, \mathcal{V})$} is $\func{\vmap_{\mathscr{X}}}{\F_2^r}{\venn_{\mathscr{X}}(S, \mathcal{V})}$ defined by
    \[
        \vmap_{\mathscr{X}}(\vv{a})
        = \left( \bigcap_{i \in I_{\vv{a}}} X_i \right) \cap \left( \bigcap_{i \notin I_{\vv{a}}} X_i^{\comp} \right)
    \]
    for $\vv{a} \in \F_2^r$.
\end{definition}

\begin{remark} \leavevmode
\begin{enumerate}
    \item The function $\func{\zmap_{\mathscr{X}}}{\F_2^r}{\mathcal{V}}$ defined above is the inverse of the map assigning each element of $\mathcal{V}$ to its vector expression with respect to the basis $\mathscr{X}$, so it is a linear bijection.
    
    \item The function $\func{\vmap_{\mathscr{X}}}{\F_2^r}{\venn_{\mathscr{X}}(S, \mathcal{V})}$ is bijective and makes $\venn_{\mathscr{X}}(S, \mathcal{V})$ into a vector space over $\F_2$ of dimension $r$ with addition defined by
    \[
        \vmap_{\mathscr{X}}(\vv{a}) + \vmap_{\mathscr{X}}(\vv{b})
        := \vmap_{\mathscr{X}}(\vv{a} + \vv{b})
    \]
    for all $\vmap_{\mathscr{X}}(\vv{a}), \vmap_{\mathscr{X}}(\vv{b}) \in \venn_{\mathscr{X}}(S, \mathcal{V})$.

    \item The Venn region $\vmap_{\mathscr{X}}(\vv{0})$ is the set of isolated points for $\mathcal{V}$, so $\vmap_{\mathscr{X}}(\vv{0}) = R_{\varnothing}$.

    \item The set $\venn_{\mathscr{X}}(S, \mathcal{V})$, considered as a collection of sets, may contain multiple copies of the empty set, but since these Venn regions are indexed by vectors in $\F_2^r$, they are distinguishable.

    \item Lemma~\ref{lem:basis-is-enough} implies that the elements of $\venn_{\mathscr{X}}(S, \mathcal{V})$ and those of $\venn_{\mathscr{Y}}(S, \mathcal{V})$ are the same for any bases $\mathscr{X}$ and $\mathscr{Y}$ for $\mathcal{V}$, so we will denote this basis-independent collection of Venn regions by $\venn_{\mathcal{V}}(S)$. In Corollary~\ref{cor:independent-addition}, we will show that the vector space structure on $\venn_{\mathcal{V}}(S)$ is also basis-independent.
\end{enumerate}
\end{remark}

By Lemma~\ref{lem:venn-containment}, each set in $\mathcal{V}$ is a union of Venn regions. In the next example, we examine the coordinates for a set and its constituent Venn regions.

\begin{example} \label{running-example-4}
    Consider again the set $S = \{\vv{a}_1, \ldots, \vv{a}_{10}, \vv{w}_1, \vv{w}_2, \vv{w}_3 \}$ from Example~\ref{running-example-1}, with the subset $\mathcal{V} \subseteq \pow{S}$ spanned by the basis $\mathscr{X} = \{X_1, X_2, X_3\}$ defined in Example~\ref{running-example-3}.
    The top of Figure~\ref{fig:running-venns} displays the Venn diagram corresponding to $\mathscr{X}$.
    Table~\ref{running-bitstring-table} displays the elements of $\mathcal{V}$ and the elements of $\venn_{\mathcal{V}}(S)$ parametrized by elements of $\F_2^3$.

    \begin{table}
        \centering
        \caption{The elements of $\mathcal{V}$ and the elements of $\venn_{\mathcal{V}}(S)$ parametrized by elements of $\F_2^3$, from Example~\ref{running-example-4}.}
        \label{running-bitstring-table}
        {\tiny
        \[
        \begin{array}{|l|l|l|}
            \hline
            \vv{c} & \zmap_{\mathscr{X}}(\vv{c}) & \vmap_{\mathscr{X}}(\vv{c}) \\
            \hline \hline
            000 & \varnothing & X_1^{\comp} \cap X_2^{\comp} \cap X_3^{\comp} = \varnothing \\[0.5ex]
            \hline 
            100 & X_1 = \{ \vv{w}_1, \vv{a}_1, \ldots, \vv{a}_7 \} & X_1 \cap X_2^{\comp} \cap X_3^{\comp} = \{ \vv{w}_1, \vv{a}_1, \vv{a}_2 \} \\[1ex]
            \hline 
            010 & X_2 = \{ \vv{w}_2, \vv{a}_4, \ldots, \vv{a}_{10} \} & X_1^{\comp} \cap X_2 \cap X_3^{\comp} = \{ \vv{w}_2, \vv{a}_9, \vv{a}_{10} \} \\[1ex]
            \hline 
            001 & X_3 = \{ \vv{w}_3, \vv{a}_3, \vv{a}_4, \vv{a}_8 \} & X_1^{\comp} \cap X_2^{\comp} \cap X_3 = \{ \vv{w}_3 \} \\[1ex]
            \hline 
            110 & X_1 \symdif X_2 = \{ \vv{w}_1, \vv{w}_2, \vv{a}_1, \vv{a}_2, \vv{a}_3, \vv{a}_8, \vv{a}_9, \vv{a}_{10} \} & X_1 \cap X_2 \cap X_3^{\comp} = \{ \vv{a}_5, \vv{a}_6, \vv{a}_7 \} \\[1ex]
            \hline 
            101 & X_1 \symdif X_3 = \{ \vv{w}_1, \vv{w}_3, \vv{a}_1, \vv{a}_2, \vv{a}_5, \vv{a}_6, \vv{a}_7, \vv{a}_8 \} & X_1 \cap X_2^{\comp} \cap X_3 = \{ \vv{a}_3 \} \\[1ex]
            \hline 
            011 & X_2 \symdif X_3 = \{ \vv{w}_2, \vv{w}_3, \vv{a}_3, \vv{a}_5, \vv{a}_6, \vv{a}_7, \vv{a}_9, \vv{a}_{10} \} & X_1^{\comp} \cap X_2 \cap X_3 = \{ \vv{a}_8 \} \\[1ex]
            \hline 
            111 & X_1 \symdif X_2 \symdif X_3 = \{ \vv{w}_1, \vv{w}_2, \vv{w}_3, \vv{a}_1, \vv{a}_2, \vv{a}_4, \vv{a}_9, \vv{a}_{10} \} & X_1 \cap X_2 \cap X_3 = \{ \vv{a}_4 \} \\[0.5ex]
            \hline
        \end{array}
        \]
        }
    \end{table}

    Note that 
    \begin{align*}
        \zmap_{\mathscr{X}}(101)
        &= X_1 \symdif X_3 \\
        &= \{ \vv{w}_1, \vv{w}_3, \vv{a}_1, \vv{a}_2, \vv{a}_5, \vv{a}_6, \vv{a}_7, \vv{a}_8 \} \\
        &= \{ \vv{w}_1, \vv{a}_1, \vv{a}_2 \}
            \cup \{ \vv{a}_5, \vv{a}_6, \vv{a}_7 \}
            \cup \{ \vv{w}_3 \}
            \cup \{ \vv{a}_8 \} \\
        &= \vmap_{\mathscr{X}}(100) \cup \vmap_{\mathscr{X}}(110) \cup \vmap_{\mathscr{X}}(001) \cup \vmap_{\mathscr{X}}(011).
    \end{align*}
    Observe that $101$ has dot product $1$ with each of $100, 110, 001, 011$, and in fact these are the only solutions to $101 \cdot \vv{b} = 1$. One can check that this same property holds for $\zmap_{\mathscr{X}}(\vv{a})$ for any $\vv{a} \in \F_2^3$. This is the first half of Proposition~\ref{prop:intersect-and-union} below.
    Further foreshadowing this proposition, note also that 
    \[
        \vmap_{\mathscr{X}}(101)
        = \{ \vv{a}_3 \}
        = \zmap_{\mathscr{X}}(100) \cap \zmap_{\mathscr{X}}(110) \cap \zmap_{\mathscr{X}}(001) \cap \zmap_{\mathscr{X}}(011).
    \]
\end{example}

\begin{lemma} \label{prop:venn-in-zero-sum}
    Let $S$ be a finite set and let $\mathcal{V} \leq \pow{S}$ have basis $\mathscr{X}$. Let $\vv{a}, \vv{b} \in \F_2^r$ be such that $\vmap_{\mathscr{X}}(\vv{a}) \in \venn_{\mathscr{X}}(S, \mathcal{V})$ is nonempty.
    Then
    \[
        \vmap_{\mathscr{X}}(\vv{a}) \subseteq \zmap_{\mathscr{X}}(\vv{b}) \iff \vv{a} \cdot \vv{b} = 1.
    \]
\end{lemma}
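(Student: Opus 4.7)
The plan is to reduce the statement to a straightforward parity count by picking an element of the nonempty Venn region and tracking its membership in each of the defining sets $X_i$.

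First I would fix $\vv{a}, \vv{b} \in \F_2^r$ with supports $I_{\vv{a}}, I_{\vv{b}} \subseteq \{1, \ldots, r\}$, and choose any $x \in \vmap_{\mathscr{X}}(\vv{a})$, which exists by the nonemptiness hypothesis. By the definition of $\vmap_{\mathscr{X}}(\vv{a})$ as the intersection of the $X_i$ with $i \in I_{\vv{a}}$ and the $X_i^{\comp}$ with $i \notin I_{\vv{a}}$, we see that $x \in X_i$ if and only if $i \in I_{\vv{a}}$. Applying Lemma~\ref{lem:sym-dif} to $\zmap_{\mathscr{X}}(\vv{b}) = \bigsymdif_{i \in I_{\vv{b}}} X_i$ shows $x \in \zmap_{\mathscr{X}}(\vv{b})$ if and only if $x$ lies in an odd number of $X_i$ with $i \in I_{\vv{b}}$. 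Combining these two observations, the number of such $X_i$ containing $x$ equals $\lvert I_{\vv{a}} \cap I_{\vv{b}} \rvert$, which is odd if and only if $\vv{a} \cdot \vv{b} = 1$ in $\F_2$.

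Next I would promote this pointwise statement to a containment statement. The key observation is that the argument of the previous paragraph did not depend on the specific choice of $x$: every element $y \in \vmap_{\mathscr{X}}(\vv{a})$ satisfies $y \in X_i$ if and only if $i \in I_{\vv{a}}$, so the same parity count shows that either every element of $\vmap_{\mathscr{X}}(\vv{a})$ lies in $\zmap_{\mathscr{X}}(\vv{b})$, or none does. Equivalently, one can invoke Lemma~\ref{lem:venn-containment}: since each $X_i$ is a union of Venn regions from $\mathscr{X}$, so is any symmetric difference of the $X_i$, and hence so is $\zmap_{\mathscr{X}}(\vv{b})$; the nonempty Venn region $\vmap_{\mathscr{X}}(\vv{a})$ must therefore be either contained in or disjoint from $\zmap_{\mathscr{X}}(\vv{b})$.

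Combining these two steps, $\vmap_{\mathscr{X}}(\vv{a}) \subseteq \zmap_{\mathscr{X}}(\vv{b})$ if and only if $x \in \zmap_{\mathscr{X}}(\vv{b})$ for the chosen $x$, which by the parity argument holds if and only if $\vv{a} \cdot \vv{b} = 1$. The only potential obstacle is ensuring that ``contained in or disjoint from'' dichotomy really applies to arbitrary elements of $\mathcal{V}$ and not just to basis elements, but this is handled either by the union-of-regions observation above or, more directly, by noting that membership in every $X_i$ (and hence in every $\F_2$-combination of them) is constant on each Venn region.
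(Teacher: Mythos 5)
Your proposal is correct and follows essentially the same route as the paper: the same parity count showing that a point (equivalently, the whole Venn region, since membership in each $X_i$ is constant on regions) lies in $\zmap_{\mathscr{X}}(\vv{b})$ exactly when $\abs{I_{\vv{a}} \cap I_{\vv{b}}}$ is odd, i.e.\ when $\vv{a} \cdot \vv{b} = 1$. The only cosmetic difference is that you argue pointwise and then lift to the region, while the paper phrases the count directly in terms of which basis sets contain the region.
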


\begin{proof}
    Let $\vv{b} = (b_1, \ldots, b_r) \neq \vv{0}$ have support $I = \{i_1, \ldots, i_s\}$ of $\vv{b}$.
    The elements of $\zmap(\vv{b}) = X_{i_1} \symdif \cdots \symdif X_{i_s}$ are those contained in exactly an odd number of the sets $X_{i_1}, \ldots, X_{i_s}$. We will determine which Venn regions consist of points with that property.

    By definition $\vmap(\vv{a}) \subseteq X_{i_j}$ if and only if $a_{i_j} = 1$, for $1 \leq j \leq s$. Therefore $\vmap(\vv{a})$ is contained in an odd number of the sets $X_{i_1}, \ldots, X_{i_s}$ if and only if the entries $a_{i_1}, \ldots, a_{i_s}$ have an odd number of ones. Now consider the dot product $\vv{a} \cdot \vv{b}$. Considering which entries of $\vv{b}$ are zero and which are one, we have
    \begin{align*}
        \vv{a} \cdot \vv{b}
        &= (a_1, \ldots, a_r) \cdot (b_1, \ldots, b_r) \\[1ex]
        &= a_1 b_1 + \cdots + a_r b_r \\[1ex]
        &= a_{i_1} + \cdots + a_{i_s}.
    \end{align*}
    Since these are elements of $\F_2$, this sum equals one if and only if the entries $a_{i_1}, \ldots, a_{i_s}$ have an odd number of ones. Therefore $\vmap(\vv{a}) \subseteq \zmap(\vv{b})$ if and only if $\vv{a} \cdot \vv{b} = 1$.
\end{proof}

We have the following useful corollary.

\begin{corollary} \label{cor:containment-number}
    Let $S$ be a finite set, and let $\mathcal{V} \leq \pow{S}$ with $\dim(\mathcal{V}) = r$. Each nonempty Venn region $R$ is contained in either none or exactly half, $2^{r-1}$, of the sets in $\mathcal{V}$.
    Hence each point in $S$ is contained in none or exactly $2^{r-1}$ of the sets in $\mathcal{V}$.
    Also each nonempty element of $\mathcal{V}$ contains exactly $2^{r-1}$ nonempty Venn regions.
\end{corollary}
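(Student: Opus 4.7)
The overall plan is to reduce all three assertions to Lemma~\ref{prop:venn-in-zero-sum} together with the standard fact that any nonzero $\F_2$-linear functional on $\F_2^r$ sends exactly $2^{r-1}$ vectors to $1$. Fix any basis $\mathscr{X} = \{X_1,\ldots,X_r\}$ for $\mathcal{V}$ and work with the coordinate bijections $\func{\zmap_{\mathscr{X}}}{\F_2^r}{\mathcal{V}}$ and $\func{\vmap_{\mathscr{X}}}{\F_2^r}{\venn_{\mathscr{X}}(S,\mathcal{V})}$, so that ``a set in $\mathcal{V}$'' and ``a Venn region'' are both labelled by vectors in $\F_2^r$ and can be counted there.

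For the first assertion, write $R = \vmap_{\mathscr{X}}(\vv{a})$ and consider the functional $\vv{b}\mapsto \vv{a}\cdot\vv{b}$. If $\vv{a}=\vv{0}$, this functional is identically zero, so by Lemma~\ref{prop:venn-in-zero-sum} no $\zmap_{\mathscr{X}}(\vv{b})$ with $\vv{b}\neq\vv{0}$ contains $R$; and $R$ is not contained in $\zmap_{\mathscr{X}}(\vv{0})=\varnothing$ because $R$ is nonempty, so $R$ is in none of the sets of $\mathcal{V}$. If $\vv{a}\neq\vv{0}$, the functional is nonzero, hence its preimage of $1$ is a coset of the kernel of size $2^{r-1}$; by Lemma~\ref{prop:venn-in-zero-sum} these $2^{r-1}$ vectors index exactly the sets of $\mathcal{V}$ containing $R$, and since $\zmap_{\mathscr{X}}$ is injective these sets are distinct. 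The second assertion is then immediate: every $x\in S$ lies in a unique Venn region $R_x$, and a set $A\in\mathcal{V}$ contains $x$ if and only if $A\supseteq R_x$, so the count from the first assertion transfers verbatim to $x$.

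For the third assertion, let $A = \zmap_{\mathscr{X}}(\vv{b})$ be nonempty, which forces $\vv{b}\neq\vv{0}$. By Lemma~\ref{lem:venn-containment}, $A$ is the disjoint union of the Venn regions it contains, and by Lemma~\ref{prop:venn-in-zero-sum} the Venn regions $\vmap_{\mathscr{X}}(\vv{a})$ sitting inside $A$ correspond precisely to the indices $\vv{a}\in\F_2^r$ with $\vv{a}\cdot\vv{b}=1$. Because $\vv{b}\neq\vv{0}$, the same linear-functional count gives exactly $2^{r-1}$ such indices, matching the claim.

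The main obstacle is purely bookkeeping: juggling three parallel $\F_2^r$-indexings (elements of $\mathcal{V}$, Venn regions, and the coefficient vectors themselves) and handling the isolated case $\vv{a}=\vv{0}$ separately in part one, since Lemma~\ref{prop:venn-in-zero-sum} is stated for nonempty $\vmap_{\mathscr{X}}(\vv{a})$. Once these bookkeeping issues are sorted out, each of the three counts collapses to the single linear-algebra statement that a nonzero functional on $\F_2^r$ has a fiber of size $2^{r-1}$.
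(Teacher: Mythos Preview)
Your proof is correct and follows essentially the same approach as the paper: fix a basis, translate both $\mathcal{V}$ and the Venn regions to $\F_2^r$ via $\zmap_{\mathscr{X}}$ and $\vmap_{\mathscr{X}}$, invoke Lemma~\ref{prop:venn-in-zero-sum}, and reduce each count to the fact that the solution set of $\vv{a}\cdot\vv{x}=1$ (for $\vv{a}\neq\vv{0}$) has size $2^{r-1}$. The paper phrases this last step as ``a hyperplane in $\F_2^r$ has $2^{r-1}$ points'' rather than in terms of linear functionals, but this is a cosmetic difference only.
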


\begin{proof} 
    Let $\mathscr{X}$ be a basis for $\mathcal{V}$, let $R \in \venn_{\mathscr{X}}(S, \mathcal{V})$ be nonempty, and let $\vv{a} \in \F_2^r$ be such that $R = \vmap_{\mathscr{X}}(\vv{a})$. If $\vv{a} = \vv{0}$ then $\vmap(\vv{a}) = R_{\varnothing}$, the isolated point set, so $R$ does not intersect any sets in $\mathcal{V}$. Suppose $\vv{a} \neq \vv{0}$.
    Lemma~\ref{prop:venn-in-zero-sum} implies that
    \[
        \abs{\set{A \in \mathcal{V}}{R \subseteq A}}
        = \abs{\set{\vv{b} \in \F_2^r}{\vv{a} \cdot \vv{b} = 1}}.
    \]
    The linear equation $\vv{a} \cdot \vv{x} = 1$ defines a hyperplane of $\F_2^r$, which is an $(r-1)$-flat and hence contains $2^{r-1}$ points.
    Therefore
    \[
        \abs{\set{A \in \mathcal{V}}{R \subseteq A}}
        = \abs{\set{\vv{b} \in \F_2^r}{\vv{a} \cdot \vv{b} = 1}}
        = 2^{r-1}.
    \]

    Let $\vv{x} \in S$ and let $R_{\vv{x}}$ be the Venn region containing $\vv{x}$. By Lemma~\ref{lem:venn-containment}, we know $R_{\vv{x}}$ is contained in a set in $\mathcal{V}$ if and only if it intersects that set. It follows that $\vv{x}$ is contained in the same number of sets in $\mathcal{V}$, which we just proved is $0$ or $2^{r-1}$.

    Now let $X \in \mathcal{V}$ be nonempty, and let $\vv{b} \in \F_2^r$ be such that $\zmap_{\mathscr{X}}(\vv{b}) = X$, and note that $\vv{b} \neq \vv{0}$. By Lemma~\ref{lem:venn-containment}, the number of nonempty Venn regions contained in $X$ is
    \[
        \abs{\set{\vv{a} \in \F_2^r}{\vv{a} \cdot \vv{b} = 1}}
        = 2^{r-1}.
    \]
\end{proof}

\begin{proposition} \label{prop:intersect-and-union}
    Let $S$ be a set, let $\mathcal{V} \leq \pow{S}$, let $A \in \mathcal{V}$, and let $R \in S / {\sim_{\mathcal{V}}} - \{R_{\varnothing}\}$.
    Then
    \[
        A = \bigcup_{\substack{R' \in S/{\sim_{\mathcal{V}}}, \\ R' \cap A \neq \varnothing}} R'
        \quad \text{and} \quad
        R = \bigcap_{\substack{A' \in \mathcal{V}, \\ R \cap A' \neq \varnothing}} A'.
    \]
\end{proposition}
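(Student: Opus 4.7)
The first equality is an immediate application of Lemma~\ref{lem:venn-containment}. By part (2), $A$ is a union of Venn regions from $\mathcal{V}$, and by part (1) any Venn region that meets $A$ is contained in $A$. Since every $x \in A$ lies in its own nonempty Venn region, which meets $A$, the union of nonempty Venn regions $R'$ with $R' \cap A \neq \varnothing$ equals $A$.

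For the second equality, the inclusion $R \subseteq \bigcap A'$ is also immediate from Lemma~\ref{lem:venn-containment}(1): every $A' \in \mathcal{V}$ meeting the nonempty set $R$ must contain $R$. The substantive direction is $\bigcap A' \subseteq R$, which I plan to prove by fixing a basis $\mathscr{X} = \{X_1, \ldots, X_r\}$ of $\mathcal{V}$ and writing $R = \vmap_{\mathscr{X}}(\vv{a})$. Since $R \neq R_{\varnothing}$, we have $\vv{a} \neq \vv{0}$. Fix $x$ in the intersection, and let $R_x = \vmap_{\mathscr{X}}(\vv{a}_x)$ be the (nonempty) Venn region containing $x$; the task becomes showing $\vv{a}_x = \vv{a}$.

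Since $\vv{a} \neq \vv{0}$, I can pick $\vv{b}$ with $\vv{a} \cdot \vv{b} = 1$. By Lemma~\ref{prop:venn-in-zero-sum}, $R \subseteq \zmap_{\mathscr{X}}(\vv{b})$, so $\zmap_{\mathscr{X}}(\vv{b})$ is a nonempty element of $\mathcal{V}$ meeting $R$; by hypothesis $x \in \zmap_{\mathscr{X}}(\vv{b})$, and Lemma~\ref{lem:venn-containment}(1) then gives $R_x \subseteq \zmap_{\mathscr{X}}(\vv{b})$. In particular, $\vv{a}_x \neq \vv{0}$ (otherwise $R_x = R_{\varnothing}$, which is not contained in any nonempty element of $\mathcal{V}$). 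Applying Lemma~\ref{prop:venn-in-zero-sum} to $R_x$, the same argument yields the key implication: for every $\vv{b} \in \F_2^r$ with $\vv{a} \cdot \vv{b} = 1$, we also have $\vv{a}_x \cdot \vv{b} = 1$.

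To finish, I would invoke the $\F_2$-specific fact that two distinct nonzero vectors in $\F_2^r$ are automatically linearly independent, since the only nonzero scalar is $1$. Hence if $\vv{a}_x \neq \vv{a}$, the linear map $\F_2^r \to \F_2^2$ given by $\vv{b} \mapsto (\vv{a} \cdot \vv{b},\ \vv{a}_x \cdot \vv{b})$ has rank $2$, hence surjects, so some $\vv{b}$ maps to $(1,0)$, contradicting the implication from the previous paragraph. Therefore $\vv{a}_x = \vv{a}$, so $R_x = R$ and $x \in R$. The main obstacle is precisely this reverse containment: a naive basis-free attempt using only the equivalence relation $\sim_{\mathcal{V}}$ can produce a set in $\mathcal{V}$ separating $R$ from $R_x$ in the ``wrong'' direction (containing $R_x$ but not $R$), and the orthogonality argument above is what converts the hypothesis on $x$ into a usable separator of the right type.
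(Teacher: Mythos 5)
Your proof is correct, but for the substantive inclusion it takes a different route from the paper. The first equality and the easy containment $R \subseteq \bigcap A'$ are handled exactly as in the paper, via Lemma~\ref{lem:venn-containment}. For the reverse containment, the paper invokes the counting statement of Corollary~\ref{cor:containment-number}: writing $\mathcal{V}_R = \set{A' \in \mathcal{V}}{R \cap A' \neq \varnothing}$, it notes $\abs{\mathcal{V}_R} = 2^{r-1}$, and since any point of $S$ lies in none or exactly $2^{r-1}$ elements of $\mathcal{V}$, a point $x \in \bigcap_{A' \in \mathcal{V}_R} A'$ lies in no element outside $\mathcal{V}_R$ and hence, by the definition of the Venn regions, lies in $R$. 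You instead bypass the counting corollary and argue in coordinates directly from Lemma~\ref{prop:venn-in-zero-sum}: the hypothesis on $x$ forces $\vv{a}_x \cdot \vv{b} = 1$ whenever $\vv{a} \cdot \vv{b} = 1$, and since two distinct nonzero vectors of $\F_2^r$ are linearly independent, the map $\vv{b} \mapsto (\vv{a}\cdot\vv{b}, \vv{a}_x\cdot\vv{b})$ would be surjective and produce a separating $\vv{b}$, so $\vv{a}_x = \vv{a}$. Both arguments ultimately rest on the same duality encoded in Lemma~\ref{prop:venn-in-zero-sum} (the paper's corollary is itself a consequence of it), but yours trades the cardinality bookkeeping for an explicit linear-algebraic forcing of the coordinate vectors, which is self-contained and makes clear exactly why the containment pattern determines the region; the paper's version is shorter given that Corollary~\ref{cor:containment-number} is already available and yields the quantitative fact $\abs{\mathcal{V}_R} = 2^{r-1}$ along the way. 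Your steps all check out, including the needed nonemptiness of $R$ (elements of $S/{\sim_{\mathcal{V}}}$ are nonempty regions) and of $R_x$, so the appeals to Lemma~\ref{prop:venn-in-zero-sum} are legitimate.
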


\begin{proof}
    By Lemma~\ref{lem:venn-containment}, we know that $A$ is the union of the Venn regions that intersect it, so
    \[
        A = \bigcup_{\substack{R' \in S/{\sim_{\mathcal{V}}}, \\ R' \cap A \neq \varnothing}} R'.
    \]
    
    Let $\mathcal{V}_R = \set{A' \in \mathcal{V}}{R \cap A' \neq \varnothing}$. Corollary~\ref{cor:containment-number} implies that $\abs{\mathcal{V}_R} = 2^{r-1}$, where $r = \dim(\mathcal{V})$.
    By the definition of Venn regions we have
    \[
        R =
        \bigcap_{A' \in \mathcal{V}_R} A'
        - \bigcap_{A' \in (\mathcal{V}_R)^{\comp}} A'
        \subseteq
        \bigcap_{A' \in \mathcal{V}_R} A'.
    \]
    On the other hand, let $\vv{x} \in \bigcap_{A \in \mathcal{V}_R} A'$. Since $\abs{\mathcal{V}_R} = 2^{r-1}$ and by Corollary~\ref{cor:containment-number} we know $\vv{x}$ is contained in exactly $2^{r-1}$ elements of $\mathcal{V}$, it follows that $\vv{x} \notin A'$ for all $A' \in (\mathcal{V}_R)^{\comp}$. Thus
    \[
        \vv{x} \in
        \bigcap_{A' \in \mathcal{V}_R} A'
        - \bigcap_{A' \in (\mathcal{V}_R)^{\comp}} A'
        = R.
    \]
    Therefore $R = \bigcap_{A' \in \mathcal{V}_R} A'$.
\end{proof}


For any change of basis $\mathscr{X} \mapsto \mathscr{Y}$ of a vector space $\mathcal{V} \subseteq \pow{S}$, there is a corresponding change of coordinates of $\venn_{\mathcal{V}}(S)$. We now prove that this change of coordinates is linear, and easily derived from the change of basis for $\mathcal{V}$.

\begin{theorem} \label{thm:venn-transformation-formula}
Let $S$ be a finite set, let $\mathcal{V} \leq \pow{S}$ have $\dim (\mathcal{V}) = r$, and let $\mathscr{X} = \{X_1, \ldots, X_r\}$ and $\mathscr{Y} = \{Y_1, \ldots, Y_r\}$ be bases for $\mathcal{V}$ with change of basis matrix $M$ from $\mathscr{X}$ to $\mathscr{Y}$. Then the corresponding coordinates for $\venn_{\mathcal{V}}(S)$ transform by the matrix $(M^T)^{-1}$, i.e.~if the commutative diagram below on the left commutes, then so does the diagram on the right.
\[
\begin{tikzcd}
    \F_2^r \arrow[rrd, "\zmap_{\mathscr{X}}"'] \arrow[rr, "\text{mult.~by $M$}"] & & \F_2^r \arrow[d, "\zmap_{\mathscr{Y}}"] \\
    & & \mathcal{V}
\end{tikzcd}
    \qquad \Longrightarrow \qquad
\begin{tikzcd}
    \F_2^r \arrow[rrd, "\vmap_{\mathscr{X}}"'] \arrow[rr, "\text{mult.~by $(M^T)^{-1}$}"] & & \F_2^r \arrow[d, "\vmap_{\mathscr{Y}}"] \\
    & & \venn_{\mathcal{V}}(S)
\end{tikzcd}
\]
\end{theorem}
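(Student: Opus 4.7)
The plan is to first translate the diagrammatic hypothesis into the explicit identity $\zmap_{\mathscr{X}}(\vv{a}) = \zmap_{\mathscr{Y}}(M\vv{a})$ for all $\vv{a} \in \F_2^r$, equivalently $\zmap_{\mathscr{Y}}(\vv{b}) = \zmap_{\mathscr{X}}(M^{-1}\vv{b})$. The target is to show $\vmap_{\mathscr{X}}(\vv{a}) = \vmap_{\mathscr{Y}}((M^T)^{-1}\vv{a})$ for every $\vv{a} \in \F_2^r$. The engine of the proof will be Lemma~\ref{prop:venn-in-zero-sum}, which pins down a nonempty Venn region by the linear functional it defines via dot product. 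Transposing that characterization across bases through the standard identity $\vv{a}\cdot(M^{-1}\vv{b}) = ((M^T)^{-1}\vv{a})\cdot \vv{b}$ is what produces the exponent $(M^T)^{-1}$.

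For the main case, I would fix $\vv{a}\in\F_2^r$ with $R := \vmap_{\mathscr{X}}(\vv{a}) \ne \varnothing$. Combining Lemma~\ref{prop:venn-in-zero-sum} with the change-of-basis identity above, for every $\vv{b} \in \F_2^r$ I would derive
\[
R \subseteq \zmap_{\mathscr{Y}}(\vv{b}) \iff R \subseteq \zmap_{\mathscr{X}}(M^{-1}\vv{b}) \iff \vv{a} \cdot (M^{-1}\vv{b}) = 1 \iff \bigl((M^T)^{-1}\vv{a}\bigr) \cdot \vv{b} = 1.
\]
By Lemma~\ref{lem:basis-is-enough} the nonempty region $R$ also equals $\vmap_{\mathscr{Y}}(\vv{a}')$ for a unique $\vv{a}' \in \F_2^r$, and Lemma~\ref{prop:venn-in-zero-sum} applied with respect to $\mathscr{Y}$ gives $R \subseteq \zmap_{\mathscr{Y}}(\vv{b}) \iff \vv{a}' \cdot \vv{b} = 1$. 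Comparing the two characterizations over all $\vv{b}$, and using that a vector in $\F_2^r$ is determined by the linear functional $\vv{b}\mapsto\vv{v}\cdot\vv{b}$ it defines, forces $\vv{a}' = (M^T)^{-1}\vv{a}$.

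The remaining case is when $\vmap_{\mathscr{X}}(\vv{a}) = \varnothing$, where I expect to encounter the main obstacle: the label $\vv{a}$ is no longer uniquely determined by its (set-theoretic) image, so I cannot recover the corresponding $\mathscr{Y}$-coordinate from the geometry alone. I would dispose of this by contradiction. If $\vmap_{\mathscr{Y}}((M^T)^{-1}\vv{a}) =: R'$ were nonempty, then by Lemma~\ref{lem:basis-is-enough} there would exist a unique $\vv{a}''$ with $\vmap_{\mathscr{X}}(\vv{a}'') = R'$; applying the already-established nonempty case to $\vv{a}''$ would yield $(M^T)^{-1}\vv{a}'' = (M^T)^{-1}\vv{a}$, so $\vv{a}'' = \vv{a}$, giving $\vmap_{\mathscr{X}}(\vv{a}) = R' \ne \varnothing$, a contradiction.

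The bulk of the work is algebraic---the transpose-inverse identity and the bijection of $\F_2^r$ with its dual via dot product---together with careful use of Lemma~\ref{lem:basis-is-enough} to transport a nonempty Venn region back and forth between the two bases. The empty-label bookkeeping in the last paragraph is the only delicate point, and it is handled cleanly once the nonempty case has been nailed down.
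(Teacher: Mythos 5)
Your proposal is correct, and it runs on the same engine as the paper's proof: Lemma~\ref{prop:venn-in-zero-sum} combined with the adjoint identity $\vv{a}\cdot(M^{-1}\vv{b}) = \bigl((M^T)^{-1}\vv{a}\bigr)\cdot\vv{b}$. The execution differs in a way worth noting. The paper introduces the change-of-coordinates map $f = (\vmap_{\mathscr{Y}})^{-1}\circ\vmap_{\mathscr{X}}$, derives $\vv{a}\cdot\vv{b} = f(\vv{a})\cdot(M\vv{b})$ for those $\vv{a}$ with $\vmap_{\mathscr{X}}(\vv{a})$ nonempty, and then extracts $f=(M^T)^{-1}$ by evaluating at the standard basis vectors and invoking linearity of $f$; this tacitly uses that each region $\vmap_{\mathscr{X}}(\vv{e}_i)$ is nonempty and that $f$ is linear, neither of which is automatic for an arbitrary basis of an arbitrary $\mathcal{V}\leq\pow{S}$ (basis-independence of the vector space structure is only established afterwards, in Corollary~\ref{cor:independent-addition}, as a consequence of this theorem). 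Your pointwise argument sidesteps both: for each $\vv{a}$ whose region is nonempty you identify its unique $\mathscr{Y}$-label by comparing the two dot-product characterizations over all $\vv{b}$ and using nondegeneracy of the dot product on $\F_2^r$, and you settle the empty-label case by a short contradiction that feeds back through the nonempty case. So your route is slightly more elementary and more watertight at exactly the edge cases (empty Venn regions, possibly empty $\vmap_{\mathscr{X}}(\vv{e}_i)$) that the printed proof glosses over; the only cost is the extra bookkeeping in your final paragraph, which you handle correctly.
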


\begin{proof}
    Let $\{\vv{e}_1, \ldots, \vv{e}_r\}$ be the standard basis for $\F_2^r$.
    Then $\zmap_{\mathscr{X}}(\vv{b}) = \zmap_{\mathscr{Y}} (M \vv{b})$ for all $\vv{b} \in \F_2^r$.
    Since coordinate functions for $\venn_{\mathcal{V}}(S)$ with respect to a basis are bijective linear functions $\F_2^r \to \venn_{\mathcal{V}}(S)$, we have an invertible linear change-of-coordinates function
    \[
        \func{f = (\vmap_{\mathscr{Y}})^{-1} \circ \vmap_{\mathscr{X}}}{\F_2^r}{\F_2^r}.
    \]
    We will show that $f$ is given by multiplication by the matrix $(M^T)^{-1}$.

    Let $\vv{a}, \vv{b} \in \F_2^r$ be nonzero such that $\vmap_{\mathscr{X}}(\vv{a}) \in \venn_{\mathcal{V}}(S)$ is nonempty.
    Then by Lemma~\ref{prop:venn-in-zero-sum}, we have
    \begin{eqnarray*}
        \vv{a} \cdot \vv{b} = 1
        & \iff &
        \vmap_{\mathscr{X}}(\vv{a}) \subseteq \zmap_{\mathscr{X}}(\vv{b}) \\
        & \iff & 
        \vmap_{\mathscr{Y}}(f(\vv{a})) \subseteq \zmap_{\mathscr{Y}}(M \vv{b}) \\
        & \iff & 
        f(\vv{a}) \cdot (M \vv{b}) = 1.
    \end{eqnarray*}
    Since the only possible outputs of this dot product are $0$ and $1$, it follows that
    \[
        \vv{a} \cdot \vv{b} = f(\vv{a}) \cdot (M \vv{b}).
    \]
    Since every basis vector $\vv{e}_i$ is nonzero and has $\vmap_{\mathscr{X}}(\vv{e}_i)$ nonempty, it follows that
    \[
        \vv{e}_i \cdot \vv{e}_j
        = f(\vv{e}_i) \cdot (M \vv{e}_j)
        = (M^T f(\vv{e}_i)) \cdot \vv{e}_j
    \]
    for all $i, j \in \{1, \ldots, r\}$. Therefore $M^T f(\vv{e}_i) = \vv{e}_i$ for all $i \in \{1, \ldots, r\}$. Since $f$ is linear it follows that $M^T (f(\vv{x})) = \vv{x}$, and hence $f(\vv{x}) = (M^T)^{-1} \vv{x}$, for all $\vv{x} \in \F_2^r$.
\end{proof}

\begin{corollary} \label{cor:independent-addition}
    Let $S$ be a finite set and let $\mathcal{V} \leq \pow{S}$ be a linear subspace. Then the addition operation on $\venn_{\mathcal{V}}(S)$ is independent of the choice of basis for $\mathcal{V}$.
\end{corollary}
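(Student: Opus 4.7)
My plan is to derive this as a direct consequence of Theorem~\ref{thm:venn-transformation-formula}. The addition on $\venn_{\mathcal{V}}(S)$ is defined by transporting the vector space structure on $\F_2^r$ along a coordinate function $\vmap_{\mathscr{X}}$, so I need to verify that if we change basis from $\mathscr{X}$ to another basis $\mathscr{Y}$, the induced addition is the same. The theorem tells us exactly how coordinates transform: if $M$ is the change-of-basis matrix from $\mathscr{X}$ to $\mathscr{Y}$ on the $\zmap$ side, then $\vmap_{\mathscr{X}} = \vmap_{\mathscr{Y}} \circ N$ where $N = (M^T)^{-1}$. Since $N$ is linear, this compatibility should immediately force the two additions to coincide.

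Concretely, I would begin by fixing two bases $\mathscr{X} = \{X_1, \ldots, X_r\}$ and $\mathscr{Y} = \{Y_1, \ldots, Y_r\}$ for $\mathcal{V}$, with change-of-basis matrix $M$ as in Theorem~\ref{thm:venn-transformation-formula}, and letting $N = (M^T)^{-1}$. Let $+_{\mathscr{X}}$ and $+_{\mathscr{Y}}$ denote the two addition operations on $\venn_{\mathcal{V}}(S)$, defined respectively by $\vmap_{\mathscr{X}}(\vv{a}) +_{\mathscr{X}} \vmap_{\mathscr{X}}(\vv{b}) = \vmap_{\mathscr{X}}(\vv{a} + \vv{b})$ and $\vmap_{\mathscr{Y}}(\vv{a}') +_{\mathscr{Y}} \vmap_{\mathscr{Y}}(\vv{b}') = \vmap_{\mathscr{Y}}(\vv{a}' + \vv{b}')$. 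Given any two Venn regions $R_1, R_2 \in \venn_{\mathcal{V}}(S)$, I would choose $\vv{a}, \vv{b} \in \F_2^r$ with $\vmap_{\mathscr{X}}(\vv{a}) = R_1$ and $\vmap_{\mathscr{X}}(\vv{b}) = R_2$.

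By Theorem~\ref{thm:venn-transformation-formula} we have $\vmap_{\mathscr{Y}}(N\vv{a}) = \vmap_{\mathscr{X}}(\vv{a}) = R_1$ and similarly $\vmap_{\mathscr{Y}}(N\vv{b}) = R_2$. Then computing the $\mathscr{Y}$-sum and using linearity of $N$ gives
\[
    R_1 +_{\mathscr{Y}} R_2
    = \vmap_{\mathscr{Y}}(N\vv{a}) +_{\mathscr{Y}} \vmap_{\mathscr{Y}}(N\vv{b})
    = \vmap_{\mathscr{Y}}(N\vv{a} + N\vv{b})
    = \vmap_{\mathscr{Y}}\!\bigl(N(\vv{a} + \vv{b})\bigr)
    = \vmap_{\mathscr{X}}(\vv{a} + \vv{b})
    = R_1 +_{\mathscr{X}} R_2,
\]
which is exactly the desired equality.

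There is essentially no obstacle here beyond correctly invoking the transformation formula. The only subtle point is keeping the directions of the two coordinate maps straight: $\zmap$ transforms by $M$ while $\vmap$ transforms by $(M^T)^{-1}$, and it is this latter fact, combined with linearity, that makes the argument go through. Since Lemma~\ref{lem:basis-is-enough} already guarantees that the \emph{underlying set} $\venn_{\mathcal{V}}(S)$ is basis-independent, the corollary completes the picture by showing that its $\F_2$-vector space structure is also canonical.
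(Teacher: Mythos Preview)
Your proof is correct and is essentially identical to the paper's own argument: both invoke Theorem~\ref{thm:venn-transformation-formula} to obtain a linear matrix $N$ with $\vmap_{\mathscr{X}} = \vmap_{\mathscr{Y}} \circ N$, and then use linearity of $N$ to chain the equalities $R_1 +_{\mathscr{X}} R_2 = \vmap_{\mathscr{X}}(\vv{a}+\vv{b}) = \vmap_{\mathscr{Y}}(N(\vv{a}+\vv{b})) = R_1 +_{\mathscr{Y}} R_2$. The only cosmetic difference is that you run the chain starting from $+_{\mathscr{Y}}$ rather than $+_{\mathscr{X}}$.
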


\begin{proof}
    Let $\dim(\mathcal{V}) = r$, and let $\mathscr{X}$ and $\mathscr{Y}$ be bases for $\mathcal{V}$. By Theorem~\ref{thm:venn-transformation-formula} there is an $(r \times r)$-matrix $N$ over $\F_2$ such that $\vmap_{\mathscr{X}}(\vv{a}) = \vmap_{\mathscr{Y}}(N \vv{a})$ for all $\vv{a} \in \F_2^r$. Denote the addition of Venn regions induced by $\mathscr{X}$ by $+_{\mathscr{X}}$, and that induced by $\mathscr{Y}$ by $+_{\mathscr{Y}}$.

    Let $R_1, R_2 \in \venn_{\mathcal{V}}(S)$. Then there are $\vv{a}, \vv{b} \in \F_2^r$ such that $R_1 = \vmap_{\mathscr{X}}(\vv{a}) = \vmap_{\mathscr{Y}}(N \vv{a})$ and $R_2 = \vmap_{\mathscr{X}}(\vv{b}) = \vmap_{\mathscr{Y}}(N \vv{b})$.
     Then
    \begin{align*}
        R_1 +_{\mathscr{X}} R_2
        &= \vmap_{\mathscr{X}}(\vv{a}) +_{\mathscr{X}} \vmap_{\mathscr{X}}(\vv{b}) \\
        &= \vmap_{\mathscr{X}}(\vv{a} + \vv{b}) \\
        &= \vmap_{\mathscr{Y}}(N(\vv{a} + \vv{b})) \\
        &= \vmap_{\mathscr{Y}}(N\vv{a} + N\vv{b}) \\
        &= \vmap_{\mathscr{Y}}(N\vv{a}) +_{\mathscr{Y}} \vmap_{\mathscr{Y}}(N\vv{b}) \\
        &= R_1 +_{\mathscr{Y}} R_2.
    \end{align*}
\end{proof}

\begin{example} \label{running-example-5}
    Consider again the set $S = \{\vv{a}_1, \ldots, \vv{a}_{10}, \vv{w}_1, \vv{w}_2, \vv{w}_3 \}$ from Example~\ref{running-example-1}, with the subset $\mathcal{V} \subseteq \pow{S}$ spanned by the basis $\mathscr{X} = \{X_1, X_2, X_3\}$. In Example~\ref{running-example-3} we introduced another basis $\mathscr{Y} = \{Y_1, Y_2, Y_3\}$ for $\mathcal{V}$, where $Y_1 = X_1 \symdif X_2$, $Y_2 = X_2 \symdif X_3$, and $Y_3 = X_1 \symdif X_2 \symdif X_3$. Letting $M$ be the change-of-basis matrix from $\mathscr{X}$ to $\mathscr{Y}$, we have
    \[
        M^{-1} =
        \begin{bmatrix} 
            1 & 0 & 1 \\ 
            1 & 1 & 1 \\ 
            0 & 1 & 1
        \end{bmatrix}.
    \]
    The Venn regions of $S$ are
    \begin{align*}
        \vmap_{\mathscr{X}}(000) = \vmap_{\mathscr{Y}}(000) &= \varnothing,
        \\
        \vmap_{\mathscr{X}}(100) = \vmap_{\mathscr{Y}}(101) &= \{ \vv{w}_1, \vv{a}_1, \vv{a}_2 \},
        \\[1ex]
        \vmap_{\mathscr{X}}(010) = \vmap_{\mathscr{Y}}(111) &= \{ \vv{w}_2, \vv{a}_9, \vv{a}_{10} \},
        \\[1ex]
        \vmap_{\mathscr{X}}(001) = \vmap_{\mathscr{Y}}(011) &= \{ \vv{w}_3 \},
        \\[1ex]
        \vmap_{\mathscr{X}}(110) = \vmap_{\mathscr{Y}}(010) &= \{ \vv{a}_5, \vv{a}_6, \vv{a}_7 \},
        \\[1ex]
        \vmap_{\mathscr{X}}(101) = \vmap_{\mathscr{Y}}(110) &= \{ \vv{a}_3 \},
        \\[1ex]
        \vmap_{\mathscr{X}}(011) = \vmap_{\mathscr{Y}}(100) &= \{ \vv{a}_8 \},
        \\[1ex]
        \vmap_{\mathscr{X}}(001) = \vmap_{\mathscr{Y}}(111) &= \{ \vv{a}_4 \}.
    \end{align*}
    One can check that the matrix for the change-of-basis transformation 
    \[
        \func{(\vmap_{\mathscr{Y}})^{-1} \circ \vmap_{\mathscr{X}}}{\F_2^r}{\F_2^r}
    \]
    is
    \[
        (M^T)^{-1} =
        \begin{bmatrix} 
            1 & 1 & 0 \\
            0 & 1 & 1 \\
            1 & 1 & 1
        \end{bmatrix}.
    \]
\end{example}


\section{Maps between Venn regions} \label{sec:venn-maps}

Any map $\func{g}{S}{T}$ between sets $S$ and $T$ induces a map $\func{g_{\ast}}{\pow{S}}{\pow{T}}$ given by the image of each subset. By Lemma~\ref{symdif-preserved}, if $g$ is injective then $g$ preserves the symmetric difference, so $g_{\ast}$ is linear.

\begin{definition}
    Let $S$ and $T$ be sets, and let $\mathcal{V} \leq \pow{S}$ and $\mathcal{W} \leq \pow{T}$. A function $\func{g}{S}{T}$ is a \deff{$(\mathcal{V}, \mathcal{W})$-isomorphism} if $g$ is bijective and $g_{\ast}(\mathcal{V}) = \mathcal{W}$.
\end{definition}

\begin{remark}
    Note that if $\func{g}{S}{T}$ is a $(\mathcal{V}, \mathcal{W})$-isomorphism, then the restriction $g_{\ast}|_{\mathcal{V}}$ is an injective linear map with image $\mathcal{W}$, so induces a linear bijection
    \[
        \func{g_{\mathcal{V} \mathcal{W}}}{\mathcal{V}}{\mathcal{W}}.
    \]
    Furthermore $g_{\mathcal{V} \mathcal{W}}$ is \deff{cardinality-preserving}, in the sense that
    \[
        \abs{g_{\mathcal{V} \mathcal{W}}(X)} = \abs{X}
    \]
    for all $X \in \mathcal{V}$.
\end{remark}



\begin{proposition} \label{prop:induced-Venn-map}
    Let $S$ and $T$ be sets, let $\mathcal{V} \leq \pow{S}$ and $\mathcal{W} \leq \pow{S}$, and let $\func{g}{S}{T}$ be a $(\mathcal{V}, \mathcal{W})$-isomorphism.
    Then $g_{\ast}|_{\venn(S,T)}$ is a cardinality-preserving linear bijection $\venn(S, \mathcal{V}) \to \venn(T, \mathcal{W})$.
\end{proposition}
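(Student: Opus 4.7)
My plan is to fix a basis $\mathscr{X} = \{X_1,\ldots,X_r\}$ for $\mathcal{V}$, observe that $\mathscr{Y} = \{g_{\ast}(X_1),\ldots,g_{\ast}(X_r)\}$ is then a basis for $\mathcal{W}$ (since the induced map $g_{\mathcal{V}\mathcal{W}} \colon \mathcal{V} \to \mathcal{W}$ is a linear isomorphism), and then to establish the intertwining identity $g_{\ast} \circ \vmap_{\mathscr{X}} = \vmap_{\mathscr{Y}}$ on $\F_2^r$. All three properties of the restricted map then drop out of this identity combined with earlier results in the paper.

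To prove the intertwining identity, I would use that $g \colon S \to T$ is a bijection, so $g_{\ast}$ commutes with finite intersections and with complementation: $g_{\ast}(A \cap B) = g_{\ast}(A) \cap g_{\ast}(B)$ and $g_{\ast}(S - A) = T - g_{\ast}(A)$ for all $A,B \subseteq S$. Applying these identities to the defining formula
\[
    \vmap_{\mathscr{X}}(\vv{a})
    = \left( \bigcap_{i \in I_{\vv{a}}} X_i \right) \cap \left( \bigcap_{i \notin I_{\vv{a}}} X_i^{\comp} \right)
\]
yields $g_{\ast}(\vmap_{\mathscr{X}}(\vv{a})) = \vmap_{\mathscr{Y}}(\vv{a})$, so $g_{\ast}$ sends each labelled Venn region of $(S,\mathcal{V})$ to the Venn region of $(T,\mathcal{W})$ bearing the same label in $\F_2^r$.

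From this, bijectivity is automatic, because $g_{\ast}$ restricted to $\venn_{\mathcal{V}}(S)$ factors as $\vmap_{\mathscr{Y}} \circ \vmap_{\mathscr{X}}^{-1}$, a composition of bijections. Cardinality preservation is immediate from the injectivity of $g$. For linearity I would compute, for any $\vv{a}, \vv{b} \in \F_2^r$,
\[
    g_{\ast}\bigl(\vmap_{\mathscr{X}}(\vv{a}) + \vmap_{\mathscr{X}}(\vv{b})\bigr)
    = g_{\ast}\bigl(\vmap_{\mathscr{X}}(\vv{a}+\vv{b})\bigr)
    = \vmap_{\mathscr{Y}}(\vv{a}+\vv{b})
    = \vmap_{\mathscr{Y}}(\vv{a}) + \vmap_{\mathscr{Y}}(\vv{b}),
\]
which, by the intertwining identity, equals $g_{\ast}(\vmap_{\mathscr{X}}(\vv{a})) + g_{\ast}(\vmap_{\mathscr{X}}(\vv{b}))$. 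By Corollary~\ref{cor:independent-addition}, the additions here do not depend on the choice of bases $\mathscr{X}$ and $\mathscr{Y}$, so the conclusion is intrinsic to $\venn_{\mathcal{V}}(S)$ and $\venn_{\mathcal{W}}(T)$.

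I do not expect a serious obstacle: the proof is essentially a repackaging of the fact that set-theoretic bijections commute with Boolean operations. The only mild subtlety is that empty Venn regions are distinguished through their $\F_2^r$-labels, so the restriction of $g_{\ast}$ to Venn regions must be interpreted through the coordinate correspondence rather than as literal equality of subsets of $T$; the intertwining identity handles this cleanly.
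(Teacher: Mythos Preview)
Your proposal is correct and follows essentially the same approach as the paper: choose a basis $\mathscr{X}$ for $\mathcal{V}$, push it forward to a basis $\mathscr{Y}$ for $\mathcal{W}$, verify the intertwining identity $g_{\ast}\circ \vmap_{\mathscr{X}} = \vmap_{\mathscr{Y}}$ using that bijections respect intersection and complement, and conclude that $g_{\ast}|_{\venn(S,\mathcal{V})} = \vmap_{\mathscr{Y}}\circ \vmap_{\mathscr{X}}^{-1}$ is a cardinality-preserving linear bijection. Your explicit linearity computation is a slight elaboration of what the paper leaves implicit in the composition-of-linear-bijections remark, but the argument is the same.
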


\begin{proof}
    Since $g$ is bijective, we know that $g_{\ast}$ and hence $g_{\ast}|_{\venn(S, \mathcal{V})}$ is cardinality-preserving. We now prove that the latter map is linear and injective with image $\venn(T, \mathcal{W})$.

    Let $\mathscr{X} = \{X_1, \ldots, X_r\}$ be a basis for $\mathcal{V}$, and let $Y_i = g_{\mathcal{V} \mathcal{W}}(X_i)$ for $1 \leq i \leq r$. Because $\func{g_{\mathcal{V} \mathcal{W}}}{\mathcal{V}}{\mathcal{W}}$ is a linear bijection, the set $\mathscr{Y} = \{Y_1, \ldots, Y_r\}$ is a basis for $\mathcal{W}$. Since $g$ is bijective and hence preserves set intersection and complement, for any $\vv{a} \in \F_2^r$ with support $I \subseteq \{1, \ldots, r\}$ we have
    \begin{align*}
        (g_{\ast} \circ \vmap_{\mathscr{X}})(\vv{a})
        &= g_{\ast} \left[ \left( \bigcap_{i \in I} X_i \right) \cap \left( \bigcap_{i \notin I} X_i^{\comp} \right) \right] \\
        &= \left( \bigcap_{i \in I} g_{\ast}(X_i) \right) \cap \left( \bigcap_{i \notin I} [g_{\ast}(X_i)]^{\comp} \right) \\
        &= \left( \bigcap_{i \in I} Y_i \right) \cap \left( \bigcap_{i \notin I} Y_i^{\comp} \right) \\
        &= \vmap_{\mathscr{Y}}(\vv{a}).
    \end{align*}
    Hence $g_{\ast} \circ \vmap_{\mathscr{X}} = \vmap_{\mathscr{Y}}$, i.e.~the following diagram commutes:
    \[
    \begin{tikzcd}
        \F_2^r \arrow[rd, "\vmap_{\mathscr{Y}}"'] \arrow[r, "\vmap_{\mathscr{X}}"] & \venn(S, \mathcal{V}) \arrow[d, "g_{\ast}"] \\
        & \venn(T, \mathcal{W})
    \end{tikzcd}
    \]
    Therefore $g_{\ast}|_{\venn(S, \mathcal{V})} = \vmap_{\mathscr{Y}} \circ \vmap_{\mathscr{X}}^{-1}$. Since $\func{\vmap_{\mathscr{X}}}{\F_2^r}{\venn(S, \mathcal{V})}$ and $\func{\vmap_{\mathscr{Y}}}{\F_2^r}{\venn(T, \mathcal{W})}$ are linear bijections, it follows that $g_{\ast}|_{\venn(S, \mathcal{V})}$ defines a linear bijection $\venn(S, \mathcal{V}) \to \venn(T, \mathcal{W})$.
\end{proof}

Proposition~\ref{prop:induced-Venn-map} has the immediate following corollary.

\begin{corollary}
    Let $S$ and $T$ be finite sets, and let $\mathcal{V} \leq \pow{S}$ and $\mathcal{W} \leq \pow{T}$. If there exists a $(\mathcal{V}, \mathcal{W})$-isomorphism $S \to T$, then the multisets of Venn region cardinalities for $\venn(S, \mathcal{V})$ and $\venn(T, \mathcal{W})$ must be the same. Hence, if the multisets of Venn region cardinalities are different, there is no $(\mathcal{V}, \mathcal{W})$-isomorphism.
\end{corollary}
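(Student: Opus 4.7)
The plan is to invoke Proposition~\ref{prop:induced-Venn-map} directly and then take the contrapositive for the second sentence. Suppose $\func{g}{S}{T}$ is a $(\mathcal{V}, \mathcal{W})$-isomorphism. The proposition produces a cardinality-preserving linear bijection
\[
    g_{\ast}|_{\venn(S, \mathcal{V})} \colon \venn(S, \mathcal{V}) \to \venn(T, \mathcal{W}).
\]
Because this is a bijection, the multiset $\{ |R| : R \in \venn(S, \mathcal{V}) \}$ is the image, element by element, of the multiset $\{ |g_{\ast}(R)| : R \in \venn(S, \mathcal{V}) \} = \{ |R'| : R' \in \venn(T, \mathcal{W}) \}$, and cardinality-preservation guarantees $|R| = |g_{\ast}(R)|$ for every Venn region $R$. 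The two multisets therefore coincide.

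For the second sentence, I would simply note that this is the contrapositive of the first: if the two multisets of Venn region cardinalities differ, no cardinality-preserving bijection between $\venn(S, \mathcal{V})$ and $\venn(T, \mathcal{W})$ can exist, and hence by the first part no $(\mathcal{V}, \mathcal{W})$-isomorphism $S \to T$ exists.

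There is no real obstacle here; the heavy lifting was done in Proposition~\ref{prop:induced-Venn-map}, and the only subtle point is remembering to treat the collection of cardinalities as a multiset (with multiplicity) rather than a set, since the bijection need not send distinct Venn regions to Venn regions of distinct cardinalities. The proof can be written in just a few lines.
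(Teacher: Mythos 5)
Your proposal is correct and matches the paper's intent exactly: the paper states this as an immediate corollary of Proposition~\ref{prop:induced-Venn-map} with no written proof, and your argument (the induced cardinality-preserving bijection on Venn regions forces the multisets to agree, with the second sentence being the contrapositive) is precisely the intended reasoning.
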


Not only does a $(\mathcal{V},\mathcal{W})$-isomorphism between sets induce a cardinality-preserving isomorphism between Venn region sets, but in fact the converse is true.

\begin{theorem} \label{big_theorem}
     Let $S$ and $T$ be finite sets, and let $\mathcal{V} \leq \pow{S}$ and $\mathcal{W} \leq \pow{T}$. There exists a $(\mathcal{V}, \mathcal{W})$-isomorphism $S \to T$ if and only if there is a cardinality-preserving linear bijection $\venn(S, \mathcal{V}) \to \venn(T, \mathcal{W})$.
\end{theorem}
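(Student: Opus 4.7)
The plan is as follows. The forward direction ($\Rightarrow$) is immediate from Proposition~\ref{prop:induced-Venn-map}, so I focus on the reverse. Suppose $\varphi \colon \venn(S, \mathcal{V}) \to \venn(T, \mathcal{W})$ is a cardinality-preserving linear bijection. Because $\varphi$ preserves cardinalities, for each nonempty Venn region $R$ of $S$ the image $\varphi(R)$ is a Venn region of $T$ of the same size, so I can choose any bijection $g_R \colon R \to \varphi(R)$. The nonempty Venn regions of $S$ partition $S$, and their $\varphi$-images partition $T$, so gluing these $g_R$ together yields a single bijection $g \colon S \to T$ satisfying $g_\ast(R) = \varphi(R)$ for every Venn region $R$.

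The content of the proof is to verify that $g_\ast(\mathcal{V}) = \mathcal{W}$. Fix bases $\mathscr{X}$ for $\mathcal{V}$ and $\mathscr{Y}$ for $\mathcal{W}$, both of size $r$. Since $\varphi$ is a linear bijection between $r$-dimensional $\F_2$-spaces, there exists an invertible $r \times r$ matrix $N$ over $\F_2$ such that $\varphi(\vmap_{\mathscr{X}}(\vv{a})) = \vmap_{\mathscr{Y}}(N \vv{a})$ for every $\vv{a} \in \F_2^r$. By Lemma~\ref{prop:venn-in-zero-sum}, each $X = \zmap_{\mathscr{X}}(\vv{b}) \in \mathcal{V}$ is the disjoint union of the Venn regions $\vmap_{\mathscr{X}}(\vv{a})$ with $\vv{a} \cdot \vv{b} = 1$. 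Applying $g_\ast$ (which commutes with unions because $g$ is a bijection) and substituting $\vv{a}' = N \vv{a}$ --- which converts the condition $\vv{a} \cdot \vv{b} = 1$ into $\vv{a}' \cdot ((N^{-1})^T \vv{b}) = 1$ via $(N^{-1}\vv{a}') \cdot \vv{b} = \vv{a}' \cdot ((N^{-1})^T \vv{b})$ --- I obtain
\begin{align*}
    g_\ast(X)
    &= \bigcup_{\vv{a} \cdot \vv{b} = 1} \vmap_{\mathscr{Y}}(N \vv{a}) \\
    &= \bigcup_{\vv{a}' \cdot ((N^{-1})^T \vv{b}) = 1} \vmap_{\mathscr{Y}}(\vv{a}') \\
    &= \zmap_{\mathscr{Y}}((N^{-1})^T \vv{b}),
\end{align*}
where the final equality is a second application of Lemma~\ref{prop:venn-in-zero-sum}. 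Hence $g_\ast(X) \in \mathcal{W}$, and since $(N^{-1})^T$ is invertible the assignment $\vv{b} \mapsto (N^{-1})^T \vv{b}$ is a bijection of $\F_2^r$, so $g_\ast$ restricts to a bijection $\mathcal{V} \to \mathcal{W}$, as required.

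The only real obstacle is translating between the abstract vector-space structure on $\venn(S, \mathcal{V})$ --- which is imposed via the coordinate bijection $\vmap_{\mathscr{X}}$ and shown to be basis-independent only after the fact in Corollary~\ref{cor:independent-addition} --- and the set-theoretic structure of $\mathcal{V}$ itself. Lemma~\ref{prop:venn-in-zero-sum} supplies the needed bilinear pairing, and the transpose-inverse pattern visible in Theorem~\ref{thm:venn-transformation-formula} is exactly what makes the change of variables close up cleanly. Once this dictionary is in place, no further technical work is needed.
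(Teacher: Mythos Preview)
Your proof is correct and follows essentially the same route as the paper: the forward direction via Proposition~\ref{prop:induced-Venn-map}, and the reverse by gluing region-wise bijections and then using the dot-product characterization (Lemma~\ref{prop:venn-in-zero-sum}) together with the substitution $\vv{a}' = N\vv{a}$ and the transpose-inverse relation to identify $g_\ast(\zmap_{\mathscr{X}}(\vv{b}))$ with $\zmap_{\mathscr{Y}}((N^{-1})^T\vv{b})$. The paper packages the latter map as an auxiliary linear bijection $\gamma$ and also invokes Proposition~\ref{prop:intersect-and-union} to justify writing $\zmap_{\mathscr{X}}(\vv{b})$ as a union of Venn regions, but otherwise the arguments coincide.
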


\begin{proof} \leavevmode

    Suppose $\func{g}{S}{T}$ is a $(\mathcal{V}, \mathcal{W})$-isomorphism.
    By Proposition~\ref{prop:induced-Venn-map}, the map $\func{g_{\ast}|_{\venn(S, \mathcal{V})}}{\venn(S, \mathcal{V})}{\venn(T, \mathcal{W})}$ is a cardinality-preserving linear bijection.        
    
    \bigskip
    
    Suppose there is a cardinality-preserving linear bijection
    \[
        \func{h}{\venn(S, \mathcal{V})}{\venn(T, \mathcal{W})}.
    \]
    For each $R \in \venn(S, \mathcal{V})$, since $h(R)$ has the same cardinality as $R$, we can define a bijection $R \to h(R)$. Doing so for each Venn region of $S$, since $S$ and $T$ are each the disjoint unions of their respective Venn regions, we obtain a bijective function $\func{g}{S}{T}$ such that $g_{\ast}(R) = h(R)$ for each $R \in \venn(S, \mathcal{V})$. We will show that $g$ is a $(\mathcal{V}, \mathcal{W})$-isomorphism.
        
    Let $r = \dim \venn(S, \mathcal{V}) = \dim \venn(T, \mathcal{W})$, and recall that this means $r = \dim(\mathcal{V}) = \dim(\mathcal{W})$, as well. Let $\mathscr{X} = \{X_1, \ldots, X_r\}$ and $\mathscr{Y} = \{Y_1, \ldots, Y_r\}$ be bases for $\mathcal{V}$ and $\mathcal{W}$, respectively. Let $N$ be the matrix representation of $\func{h}{\venn(S, \mathcal{V})}{\venn(T, \mathcal{W})}$ with respect to the coordinates $\vmap_\mathscr{X}$ and $\vmap_\mathscr{Y}$, so that the following diagram commutes:
    \[
    \begin{tikzcd}
        \F_2^r \arrow[d, "\vmap_{\mathscr{X}}"'] \arrow[rr, "\text{mult.~by $N$}"] & & \F_2^r \arrow[d, "\vmap_{\mathscr{Y}}"] \\
        \venn(S, \mathcal{V}) \arrow[rr, "h"] & & \venn(T, \mathcal{W})
    \end{tikzcd}
    \]

    Let $M = (N^{-1})^T$, and let $\func{\gamma}{\mathcal{V}}{\mathcal{W}}$ be the linear bijection represented by the matrix $M$ with respect to the bases $\mathscr{X}$ and $\mathscr{Y}$, so that the following diagram commutes:
    \[
    \begin{tikzcd}
        \F_2^r \arrow[d, "\zmap_{\mathscr{X}}"'] \arrow[rr, "\text{mult.~by $M$}"] & & \F_2^r \arrow[d, "\zmap_{\mathscr{Y}}"] \\
            \mathcal{V} \arrow[rr, "\gamma"] & & \mathcal{W}
    \end{tikzcd}
    \]
    We will prove that $\gamma(X) = g_{\ast}(X)$ for all $X \in \mathcal{V}$, so that $g_{\ast}$ defines a linear bijection $\mathcal{V} \to \mathcal{W}$, so $g$ is a $(\mathcal{V}, \mathcal{W})$-isomorphism.

    Let $X \in \mathcal{V}$. Then $X = \zmap_{\mathscr{X}}(\vv{a})$ for some $\vv{a} \in \F_2^r$. Then by Lemma~\ref{prop:venn-in-zero-sum} and Proposition~\ref{prop:intersect-and-union} we know
    \[
        \gamma(X)
        = \gamma(\zmap_{\mathscr{X}}(\vv{a}))
        = \zmap_{\mathscr{Y}}(M \vv{a})
        = \bigcup_{\substack{\vv{b} \in \F_2^r, \\ (M\vv{a}) \cdot \vv{b} = 1}} \vmap_{\mathscr{Y}}(\vv{b}).
    \]
    Observe that
    \begin{align*}
        \set{\vv{b} \in \F_2^r}{(M \vv{a}) \cdot \vv{b} = 1}
        &= \set{\vv{b} \in \F_2^r}{((N^{-1})^T \vv{a}) \cdot \vv{b} = 1} \\
        &= \set{\vv{b} \in \F_2^r}{\vv{a} \cdot (N^{-1} \vv{b}) = 1}.
    \end{align*}
    Since $N$ is invertible, we can apply the change of variables $\vv{b} = N \vv{c}$ to obtain
    \begin{align*}
        \set{\vv{b} \in \F_2^r}{\vv{a} \cdot (N^{-1} \vv{b}) = 1}
        &= \set{N \vv{c}}{\text{$\vv{c} \in \F_2^r$ and $\vv{a} \cdot (N^{-1} N \vv{c}) = 1$}} \\
        &= \set{N \vv{c}}{\text{$\vv{c} \in \F_2^r$ and $\vv{a} \cdot \vv{c} = 1$}}.
    \end{align*}
    Therefore
    \begin{align*}
        \gamma(X)
        = \bigcup_{\substack{\vv{b} \in \F_2^r, \\ (M \vv{a}) \cdot \vv{b} = 1}} \vmap_{\mathscr{Y}}(\vv{b})
        &= \bigcup_{\substack{\vv{c} \in \F_2^r, \\ \vv{a} \cdot \vv{c} = 1}} \vmap_{\mathscr{Y}}(N \vv{c}) \\
        &= \bigcup_{\substack{\vv{c} \in \F_2^r, \\ \vv{a} \cdot \vv{c} = 1}} h(\vmap_{\mathscr{X}}(\vv{c})) \\
        &= \bigcup_{\substack{\vv{c} \in \F_2^r, \\ \vv{a} \cdot \vv{c} = 1}} g_{\ast}[\vmap_{\mathscr{X}}(\vv{c})] \\
        &= g \left[ \bigcup_{\substack{\vv{c} \in \F_2^r, \\ \vv{a} \cdot \vv{c} = 1}} \vmap_{\mathscr{X}}(\vv{c}) \right] \\
        &= g_{\ast} \left[ \zmap_{\mathscr{X}}(\vv{a}) \right] \\
        &= g_{\ast} (X).
    \end{align*}
    Therefore $g_{\ast}$ defines a linear bijection $\mathcal{V} \to \mathcal{W}$. This completes the proof.
\end{proof}

By Theorem~\ref{big_theorem}, the key information we need from a Venn diagram such as those in Figure~\ref{fig:running-venns} is the cardinalities of each Venn region. For that reason, for a given set $S$ and subspace $\mathcal{V} \leq \pow{S}$ with basis $\mathscr{X}$, we will focus on the \deff{Venn cardinality diagram}, which displays the number of elements in each region rather than the elements themselves. See Example~\ref{ex:venn-count-example} in the next section.




        
    

\section[Zero-sum sets]{Zero-sum sets} \label{sec:zero-sums}

Let $S \subseteq \F_2^n$. In this section we will prove that the even zero-sum set $E(S)$ of $S$ is a linear subspace of $\pow{S}$, describe how an affine basis for $S$ induces a linear basis for $E(S)$, and then apply the results of the preceding section to this context.

\begin{proposition}
    \label{prop:even-is-subspace}
    Let $S \subseteq \F_2^n$. Then $E(S) \leq \pow{S}$.
\end{proposition}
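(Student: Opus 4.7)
The plan is to verify the standard subspace criteria for $E(S)$ as a subset of the $\F_2$-vector space $\pow{S}$. Since $\F_2$ has only two scalars, closure under scalar multiplication is automatic, and $\varnothing \in E(S)$ by definition, so the entire task reduces to showing that $E(S)$ is closed under the symmetric difference operation $\symdif$.

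So let $X, Y \in E(S)$; I want to verify two things about $X \symdif Y$: that it has even cardinality, and that its elements sum to $\vv{0}$. For the cardinality, I will use the standard identity $\abs{X \symdif Y} = \abs{X} + \abs{Y} - 2 \abs{X \cap Y}$, which immediately gives an even number since $\abs{X}$ and $\abs{Y}$ are both even by hypothesis.

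For the zero-sum condition, I introduce the notation $\sigma(Z) = \sum_{\vv{z} \in Z} \vv{z} \in \F_2^n$ for any finite $Z \subseteq \F_2^n$. The key identity is
\[
    \sigma(X) + \sigma(Y) = \sigma(X \symdif Y),
\]
which I would derive by writing both $X$ and $Y$ as disjoint unions $X = (X \setminus Y) \sqcup (X \cap Y)$ and $Y = (Y \setminus X) \sqcup (X \cap Y)$, so that
\[
    \sigma(X) + \sigma(Y) = \sigma(X \setminus Y) + \sigma(Y \setminus X) + 2 \sigma(X \cap Y) = \sigma(X \symdif Y),
\]
using that the coefficient $2$ vanishes in $\F_2^n$ and that $X \symdif Y = (X \setminus Y) \sqcup (Y \setminus X)$. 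Since $\sigma(X) = \sigma(Y) = \vv{0}$, it follows that $\sigma(X \symdif Y) = \vv{0}$, as required. Combined with the parity argument, this gives $X \symdif Y \in E(S)$, completing the proof.

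There is no real obstacle here; this is a routine verification. The only minor subtlety worth calling out in the write-up is remembering to include the empty set explicitly (which the definition does) so that $E(S)$ contains the zero vector of $\pow{S}$, and noting that the coefficient $2$ annihilates over $\F_2$ so that the symmetric difference interacts cleanly with the sum.
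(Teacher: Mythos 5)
Your proposal is correct and follows essentially the same route as the paper: both reduce to closure under $\symdif$, decompose $X$ and $Y$ via $X \setminus Y$, $Y \setminus X$, and $X \cap Y$, and use the vanishing of the doubled intersection term (for parity and for the sum over $\F_2^n$) to conclude. The only cosmetic difference is that you invoke the identity $\abs{X \symdif Y} = \abs{X} + \abs{Y} - 2\abs{X \cap Y}$ directly where the paper argues that $\abs{X \setminus Y}$ and $\abs{Y \setminus X}$ share a parity.
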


\begin{proof}
    By definition $\varnothing \in E(S)$, so $E(S)$ contains the zero vector of $\pow{S}$. It remains to show that $E(S)$ is closed under the symmetric difference.
    
    Let $X, Y \in E(S)$. Let $A = X - Y$, $B = Y - X$, and $C = X \cap Y$. Then $X \symdif Y = A \sqcup B$. Since $\abs{X} = \abs{A} + \abs{C}$ and $\abs{Y} = \abs{B} + \abs{C}$ are even, it follows that $\abs{A}$ and $\abs{B}$ have the same parity, so
    \[
        \abs{X \symdif Y}
        = \abs{A \sqcup B}
        = \abs{A} + \abs{B}
    \]
    must be even.
    Also
    \begin{align*}
        \sum_{\vv{x} \in X \symdif Y} \vv{x}
        &= \sum_{\vv{x} \in A} \vv{x} + \sum_{\vv{x} \in B} \vv{x} \\
        &= \sum_{\vv{x} \in A} \vv{x} + \sum_{\vv{x} \in B} \vv{x} + 2 \sum_{\vv{x} \in C} \vv{x} \\
        &= \left( \sum_{\vv{x} \in A} \vv{x} + \sum_{\vv{x} \in C} \vv{x} \right) + \left( \sum_{\vv{x} \in B} \vv{x} + \sum_{\vv{x} \in C} \vv{x} \right) \\
        &= \sum_{\vv{x} \in X} \vv{x} + \sum_{\vv{x} \in Y} \vv{x} \\
        &= 0 + 0 = 0,
    \end{align*}
    where the last line follows from the fact that $X, Y \in E(S)$.
    Therefore $X \symdif Y \in E(S)$.
\end{proof}

We now describe how to construct a basis for $E(S)$ from a basis for $S$, which will allow us to determine the dimension of $E(S)$.

\begin{definition} \label{def:dependent-to-zero-sums}
    Let $S = B \sqcup D \subseteq \F_2^n$ have affine basis $B = \{\vv{b}_0, \ldots, \vv{b}_d\}$. Let $\vv{w} \in D$, and let $\vv{w} = \vv{b}_{i_1} + \cdots + \vv{b}_{i_j}$ be the unique expression of $\vv{w}$ as an odd sum of basis elements.
    define
    \[
        X_{\vv{w}}
        = \{ \vv{w}, \vv{b}_{i_1}, \ldots, \vv{b}_{i_j} \}
    \in E(S).
    \]
    The \deff{sumset map induced by $B$} is the map $\func{\varphi}{\pow{D}}{E(S)}$ defined by $\varphi(\varnothing) = \varnothing$ and
    \[
        \varphi(D') = \bigsymdif_{\vv{w} \in D'} X_{\vv{w}}
    \]
    for nonempty $D' \in \pow{D}$.
\end{definition}

\begin{lemma} \label{lem:generating}
    Let $S = B \sqcup D \subseteq \F_2^n$ have affine basis $B$, and let $\func{\varphi}{\pow{D}}{E(S)}$ be the sumset map induced by $B$. Let $D', D_1, D_2 \subseteq D$. Then
    \begin{enumerate}
        \item $D \cap \varphi(D') = D'$;
        \item if $D_1 \cap D_2 = \varnothing$ then $\varphi(D_1 \cup D_2) = \varphi(D_1) \symdif \varphi(D_2)$;
        \item $\varphi$ is bijective and linear.
    \end{enumerate}
\end{lemma}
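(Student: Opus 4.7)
The plan is to prove the three parts in order, with (1) and (2) following from direct bookkeeping with Lemma~\ref{lem:sym-dif} and the associativity of $\symdif$, and (3) building on them together with the affine independence of $B$.

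For (1), I would use that $\varphi(D') = \bigsymdif_{\vv{w} \in D'} X_{\vv{w}}$ and that each $X_{\vv{w}}$ contains exactly one element of $D$, namely $\vv{w}$ itself, with all other elements in $B$. By Lemma~\ref{lem:sym-dif}, a point $\vv{w}' \in D$ lies in $\varphi(D')$ iff it lies in an odd number of the sets $X_{\vv{w}}$ for $\vv{w} \in D'$, which happens iff $\vv{w}' \in D'$ (it then lies in exactly one). For (2), disjointness of $D_1$ and $D_2$ means the indexing sets in the three $\bigsymdif$'s match up, so associativity and commutativity of $\symdif$ immediately give $\varphi(D_1 \cup D_2) = \varphi(D_1) \symdif \varphi(D_2)$.

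For (3), I would establish linearity, injectivity, and surjectivity separately. For \emph{linearity}, decompose $D_1 = (D_1 - D_2) \sqcup (D_1 \cap D_2)$, $D_2 = (D_2 - D_1) \sqcup (D_1 \cap D_2)$, and $D_1 \symdif D_2 = (D_1 - D_2) \sqcup (D_2 - D_1)$, each a disjoint union. Applying (2) to each decomposition and using that the two copies of $\varphi(D_1 \cap D_2)$ cancel under $\symdif$ yields $\varphi(D_1 \symdif D_2) = \varphi(D_1) \symdif \varphi(D_2)$, which together with $\varphi(\varnothing) = \varnothing$ gives $\F_2$-linearity. For \emph{injectivity}, if $\varphi(D_1) = \varphi(D_2)$ then intersecting with $D$ and applying (1) forces $D_1 = D_2$. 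For \emph{surjectivity}, given $X \in E(S)$ set $D' = X \cap D$. By (1), $\varphi(D') \cap D = D' = X \cap D$, so $X \symdif \varphi(D') \subseteq B$. Since $E(S)$ is a subspace by Proposition~\ref{prop:even-is-subspace}, $X \symdif \varphi(D')$ is an even zero-sum set contained in $B$; but $B$ is affinely independent, and the characterization of affine independence in Section~\ref{sec:affine} says that an affinely independent set has no nonempty even zero-sum subset. Hence $X \symdif \varphi(D') = \varnothing$, giving $X = \varphi(D')$.

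The main obstacle is the surjectivity step, which is where the affine independence of $B$ enters nontrivially: the argument reduces the question to the fact that the only even zero-sum subset of an affinely independent set is empty, a property obtained from the discussion of affine dependence in Section~\ref{sec:affine}. Everything else is essentially formal manipulation using properties of $\symdif$ from Lemmas~\ref{lem:sym-dif} and~\ref{symdif-preserved}, together with the bookkeeping that (2) enables.
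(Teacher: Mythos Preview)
Your proposal is correct and follows essentially the same route as the paper's proof: the decomposition for linearity, the use of part~(1) for injectivity, and the surjectivity argument via $X \symdif \varphi(D') \subseteq B$ together with affine independence of $B$ are all identical. The only cosmetic difference is in part~(1), where the paper uses distributivity of $\cap$ over $\symdif$ to write $D \cap \varphi(D') = \bigsymdif_{\vv{w} \in D'} (D \cap X_{\vv{w}}) = \bigsymdif_{\vv{w} \in D'} \{\vv{w}\} = D'$, whereas you argue pointwise via Lemma~\ref{lem:sym-dif}; these are equivalent.
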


\begin{proof}
    Note that $D \cap X_{\vv{w}} = \vv{w}$ for each $\vv{w} \in D$. Since intersection distributes over symmetric difference, it follows that
    \[
        D \cap \varphi(D')
        = D \cap \left( \bigsymdif_{\vv{w} \in D'} X_{\vv{w}} \right)
        = \bigsymdif_{\vv{w} \in D'} \left( D \cap X_{\vv{w}} \right)
        = \bigsymdif_{\vv{w} \in D'} \{ \vv{w} \}
        = D'.
    \]
        
    Suppose that $D_1 \cap D_2 = \varnothing$. Then
    \begin{align*}
        \varphi(D_1 \cup D_2)
        &= \bigsymdif_{\vv{w} \in D_1 \cup D_2} X_{\vv{w}} \\
        &= \left( \bigsymdif_{\vv{w} \in D_1} X_{\vv{w}} \right) \symdif \left( \bigsymdif_{\vv{w} \in D_2} X_{\vv{w}} \right) \\
        &= \varphi(D_1) \symdif \varphi(D_2).
    \end{align*}

    Suppose that $\varphi(D_1) = \varphi(D_2)$. By part 1 of this lemma, it follows that
    \[
        D_1
        = \varphi(D_1) \cap D
        = \varphi(D_2) \cap D
        = D_2.
    \]
    Hence $\varphi$ is injective.

    Let $Z \in E(S)$ and set $D' = Z \cap D$. Then
    \[
        [Z \symdif \varphi(D')] \cap D
        = (Z \cap D) \symdif [\varphi(D') \cap D]
        = D' \symdif D'
        = \varnothing.
    \]
    Since $S = B \sqcup D$ it follows that $Z \symdif \varphi(D') \subseteq B$. Since $Z \symdif \varphi(D') \in E(S)$ and $E(S) \cap \pow{B} = \varnothing$, this implies that $Z \symdif \varphi(D') = \varnothing$.
    Thus $\varphi(D') = Z$. Therefore $\varphi$ is surjective.

    Finally, it follows from part 2 of this lemma that
    \begin{align*}
        & \varphi(D_1) \symdif \varphi(D_2) \\
        & \qquad = \varphi \left[ (D_1 - D_2) \sqcup (D_1 \cap D_2) \right] \, \symdif \, \varphi \left[ (D_2 - D_1) \sqcup (D_1 \cap D_2) \right] \\
        & \qquad = \varphi(D_1 - D_2) \symdif \varphi(D_1 \cap D_2) \symdif \varphi(D_2 - D_1) \symdif \varphi(D_1 \cap D_2) \\
        & \qquad = \varphi(D_1 - D_2) \symdif \varphi(D_2 - D_1) \\
        & \qquad = \varphi \left( (D_1-D_2) \cup (D_2-D_1) \right) \\
        & \qquad = \varphi(D_1 \symdif D_2).
    \end{align*}
    Hence $\varphi$ is linear.
\end{proof}

\begin{corollary} \label{prop:zero-sum-basis}
    Let $S = B \sqcup D \subseteq \F_2^n$ have affine basis $B$, and let $\abs{S} = k$ and $\dim(S) = d$.
    Then
    \[
        \set{X_{\vv{w}}}{\vv{w} \in D}
    \]
    is a basis for $E(S)$ and $\dim E(S) = k - (d+1)$. 
\end{corollary}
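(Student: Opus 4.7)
The plan is to deduce this corollary directly from Lemma~\ref{lem:generating}, which has already established that the sumset map $\func{\varphi}{\pow{D}}{E(S)}$ is a linear bijection. A linear isomorphism sends any basis of its domain to a basis of its codomain, so it suffices to identify a convenient basis for $\pow{D}$ and compute the images under $\varphi$.

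First I would invoke Proposition~\ref{prop:bases}: since $\pow{D}$ is a vector space over $\F_2$ of dimension $\abs{D}$ whose singleton subsets form a basis, the collection $\set{\{\vv{w}\}}{\vv{w} \in D}$ is a basis for $\pow{D}$. Applying the linear bijection $\varphi$ from Lemma~\ref{lem:generating}, the image $\set{\varphi(\{\vv{w}\})}{\vv{w} \in D}$ is a basis for $E(S)$. By the very definition of $\varphi$ in Definition~\ref{def:dependent-to-zero-sums}, we have $\varphi(\{\vv{w}\}) = X_{\vv{w}}$, so the desired set $\set{X_{\vv{w}}}{\vv{w} \in D}$ is indeed a basis for $E(S)$.

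For the dimension count, I would note that an affine basis of the flat $\aff(S)$ of dimension $d$ has exactly $d+1$ elements, so $\abs{B} = d+1$. Since $S = B \sqcup D$ is a disjoint union and $\abs{S} = k$, we get $\abs{D} = k - (d+1)$. Therefore
\[
    \dim E(S) = \abs{D} = k - (d+1).
\]

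There is essentially no obstacle here: all the substantive work, particularly verifying that $\varphi$ is well-defined, linear, injective, and surjective, was already carried out in Lemma~\ref{lem:generating}. The corollary is really just the act of reading off the dimension and basis that Lemma~\ref{lem:generating} implicitly produces.
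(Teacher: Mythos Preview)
Your proposal is correct and follows essentially the same approach as the paper: invoke Lemma~\ref{lem:generating} for the linear bijection $\varphi$, use Proposition~\ref{prop:bases} to identify the singleton basis of $\pow{D}$, push it forward to the basis $\{X_{\vv{w}}\}$ of $E(S)$, and read off $\dim E(S) = \abs{D} = k-(d+1)$ from $\abs{B}=d+1$.
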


\begin{proof}
    Let $\func{\varphi}{\pow{D}}{E(S)}$ be the sumset map of $E(S)$ with respect to $B$.
    By Lemma~\ref{lem:generating} we know that $\varphi$ is a linear bijection. By Proposition~\ref{prop:bases}, the set
    \[
        \set{\{\vv{w}\}}{\vv{w} \in D}
    \]
    is a basis for $\pow{D}$, and hence
    \[
        \varphi(\set{\{\vv{w}\}}{\vv{w} \in D})
        = \set{X_{\vv{w}}}{\vv{w} \in D}
    \]
    is a basis for $E(S)$. Since $\abs{B} = d+1$, it follows that
    \[
        \dim E(S) = \abs{D} = \abs{S} - \abs{B} = k - (d+1).
    \]
\end{proof}

\begin{example} \label{running-example-2}
    Consider $S = B \sqcup D = \{ \vv{a}_1, \ldots, \vv{a}_{10} \} \sqcup \{ \vv{w}_1, \vv{w}_2, \vv{w}_3 \}$ from Example~\ref{running-example-1}. The subsets
    \begin{align*}
        X_1 &= X_{\vv{w}_1} = \{ \vv{w}_1, \vv{a}_1, \ldots, \vv{a}_7 \}, \\
        X_2 &= X_{\vv{w}_2} = \{ \vv{w}_2, \vv{a}_4, \ldots, \vv{a}_{10} \}, \\
        X_3 &= X_{\vv{w}_3} = \{ \vv{w}_3, \vv{a}_3, \vv{a}_4, \vv{a}_8 \}
    \end{align*}
    form a basis for $E(S)$, by Corollary~\ref{prop:zero-sum-basis}, and
    \[
        \varphi(\varnothing) = \varnothing, \quad
        \varphi(\{\vv{w}_1\}) = X_1, \quad
        \varphi(\{\vv{w}_2\}) = X_2, \quad
        \varphi(\{\vv{w}_3\}) = X_3
    \]
    and
    \begin{align*}
        \varphi(\{\vv{w}_1, \vv{w}_2\}) &= X_1 \symdif X_2 = \{ \vv{w}_1, \vv{w}_2, \vv{a}_1, \vv{a}_2, \vv{a}_3, \vv{a}_8, \vv{a}_9, \vv{a}_{10} \}, \\
        \varphi(\{\vv{w}_1, \vv{w}_3\}) &= X_1 \symdif X_3 = \{ \vv{w}_1, \vv{w}_3, \vv{a}_1, \vv{a}_2, \vv{a}_5, \vv{a}_6, \vv{a}_7, \vv{a}_8 \}, \\
        \varphi(\{\vv{w}_2, \vv{w}_3\}) &= X_2 \symdif X_3 = \{ \vv{w}_2, \vv{w}_3, \vv{a}_3, \vv{a}_5, \vv{a}_6, \vv{a}_7, \vv{a}_9, \vv{a}_{10} \}, \\
        \varphi(\{\vv{w}_1, \vv{w}_2, \vv{w}_3\}) &= X_1 \symdif X_2 \symdif X_3 = \{ \vv{w}_1, \vv{w}_2, \vv{w}_3, \vv{a}_1, \vv{a}_2, \vv{a}_4, \vv{a}_9, \vv{a}_{10} \}.
    \end{align*}
\end{example}

Suppose $S, T \subseteq \F_2^n$ are affinely equivalent. By Theorem~\ref{thm:preserve-zero-sums}, we know there exists a bijection $\func{g}{S}{T}$ such that $g_{\ast}$ defines a bijection $\func{g_E}{E(S)}{E(T)}$. Because $g$ is bijective, it follows from Lemma~\ref{symdif-preserved} that $g_{\ast}$ and $g_E$ preserve the symmetric difference, and hence both are linear. Therefore $g_E$ is an $(E(S),E(T))$-isomorphism between $S$ and $T$. Therefore we can restate Theorem~\ref{thm:preserve-zero-sums} as follows.

\begin{theorem} \label{key_thm}
    Let $S, T \subseteq \F_2^n$. Then $S$ and $T$ are affinely equivalent if and only if there exists an $(E(S),E(T)$-isomorphism $\func{g}{S}{T}$.
\end{theorem}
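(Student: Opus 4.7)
The plan is to observe that this is essentially a reformulation of Theorem~\ref{thm:preserve-zero-sums} in the language of $(\mathcal{V}, \mathcal{W})$-isomorphisms, so the proof reduces to checking that the two formulations are equivalent statements. The work is mostly unpacking definitions rather than new content.

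First I would recall the definition: a bijection $\func{g}{S}{T}$ is an $(E(S), E(T))$-isomorphism precisely when $g_{\ast}(E(S)) = E(T)$. Since $g$ is bijective, $g_{\ast}$ is itself a bijection $\pow{S} \to \pow{T}$, and by Lemma~\ref{symdif-preserved} it is linear. In particular, $g_{\ast}|_{E(S)}$ is automatically injective, so the condition $g_{\ast}(E(S)) = E(T)$ is logically equivalent to the condition that $g_{\ast}|_{E(S)}$ be a bijection onto $E(T)$.

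With this equivalence in hand, both directions fall out directly from Theorem~\ref{thm:preserve-zero-sums}. For the forward implication, suppose $S$ and $T$ are affinely equivalent; Theorem~\ref{thm:preserve-zero-sums} then supplies a bijection $\func{g}{S}{T}$ such that $g_{\ast}|_{E(S)}$ is a bijection onto $E(T)$, which by the observation above means $g_{\ast}(E(S)) = E(T)$, i.e., $g$ is an $(E(S), E(T))$-isomorphism. For the reverse implication, an $(E(S), E(T))$-isomorphism $g$ immediately satisfies $g_{\ast}(E(S)) = E(T)$, hence $g_{\ast}|_{E(S)}$ is a bijection onto $E(T)$, and Theorem~\ref{thm:preserve-zero-sums} concludes that $S$ and $T$ are affinely equivalent.

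Since the heavy lifting has already been done in Theorem~\ref{thm:preserve-zero-sums} (the construction of the affine automorphism from an even-zero-sum-preserving bijection on bases), there is no real obstacle here; the only thing to verify carefully is the bookkeeping equivalence between ``$g_{\ast}(E(S)) = E(T)$'' and ``$g_{\ast}|_{E(S)}$ bijects onto $E(T)$,'' which follows at once from the bijectivity of $g$ via Lemma~\ref{symdif-preserved}. The proof can therefore be kept to just a few lines.
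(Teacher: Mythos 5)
Your proposal is correct and matches the paper's approach exactly: the paper also presents Theorem~\ref{key_thm} as a restatement of Theorem~\ref{thm:preserve-zero-sums}, invoking Lemma~\ref{symdif-preserved} to see that the bijection $g_{\ast}|_{E(S)}$ is automatically linear, so that ``$g_{\ast}|_{E(S)}$ bijects onto $E(T)$'' is the same condition as ``$g$ is an $(E(S),E(T))$-isomorphism.'' No gaps; your bookkeeping equivalence is precisely the content the paper records before stating the theorem.
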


Combined with Theorem~\ref{big_theorem}, we have the following characterization of affine equivalence in $\F_2^n$.

\begin{theorem} \label{real_big_theorem}
    Let $S, T \subseteq \F_2^n$. Then $S$ and $T$ are affinely equivalent if and only if there is a cardinality preserving linear bijection $\venn(S, E(S)) \to \venn(T, E(T))$.
\end{theorem}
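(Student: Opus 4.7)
The plan is essentially to chain Theorem~\ref{key_thm} with Theorem~\ref{big_theorem}, specialized to $\mathcal{V} = E(S)$ and $\mathcal{W} = E(T)$. First, I would invoke Theorem~\ref{key_thm}, which translates affine equivalence of $S, T \subseteq \F_2^n$ into the existence of an $(E(S), E(T))$-isomorphism $g : S \to T$. This reduces the theorem to a statement purely about maps between subsets of $\F_2^n$ equipped with their even zero-sum subspaces.

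Next, I would verify that the hypotheses of Theorem~\ref{big_theorem} are met: by Proposition~\ref{prop:even-is-subspace}, we have $E(S) \leq \pow{S}$ and $E(T) \leq \pow{T}$, so we may directly specialize Theorem~\ref{big_theorem} to the pairs $(S, E(S))$ and $(T, E(T))$. That theorem then gives that an $(E(S), E(T))$-isomorphism $S \to T$ exists if and only if there is a cardinality-preserving linear bijection $\venn(S, E(S)) \to \venn(T, E(T))$.

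Composing the two biconditionals produces exactly the claimed equivalence. There is no real obstacle here: both component theorems have already been established, and the statement in question is their immediate composition. The proof itself should fit in two or three lines, with no computation required.
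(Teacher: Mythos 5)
Your proposal is correct and matches the paper exactly: the paper derives Theorem~\ref{real_big_theorem} as the immediate composition of Theorem~\ref{key_thm} with Theorem~\ref{big_theorem}, applied to the subspaces $E(S) \leq \pow{S}$ and $E(T) \leq \pow{T}$. Your extra remark that Proposition~\ref{prop:even-is-subspace} guarantees the hypotheses of Theorem~\ref{big_theorem} are satisfied is a sensible (if implicit in the paper) check.
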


\begin{example} \label{ex:venn-count-example}
    Consider the set $S$ from our previous examples (labeled $S_1$ below), alongside two other similar subsets of $\F_2^n$. We will use Theorem~\ref{real_big_theorem} to determine which of these sets are affinely equivalent.
    
    Let $B = \{\vv{a}_1, \ldots, \vv{a}_{10}\}$ be an affinely independent set, and let
    \[
        D_1 = \{\vv{w}_1, \vv{w}_2, \vv{w}_3\}, \quad
        D_2 = \{\vv{x}_1, \vv{x}_2, \vv{x}_3\}, \quad
        D_3 = \{\vv{y}_1, \vv{y}_2, \vv{y}_3\},
    \]
    where
    \begin{align*}
        \vv{w}_1 &= \vv{a}_1 + \cdots + \vv{a}_7, \\
        \vv{w}_2 &= \vv{a}_4 + \cdots + \vv{a}_{10}, \\
        \vv{w}_3 &= \vv{a}_3 + \vv{a}_4 + \vv{a}_8, \\[2ex]
        \vv{x}_1 &= \vv{a}_1 + \vv{a}_2 + \vv{a}_3 + \vv{a}_7 + \vv{a}_8 + \vv{a}_9 + \vv{a}_{10}, \\
        \vv{x}_2 &= \vv{a}_3 + \vv{a}_4 + \vv{a}_5 + \vv{a}_6 + \vv{a}_7 + \vv{a}_9 + \vv{a}_{10}, \\
        \vv{x}_3 &= \vv{a}_1 + \vv{a}_2 + \vv{a}_4 + \vv{a}_7 + \vv{a}_8 + \vv{a}_9 + \vv{a}_{10}, \\[2ex]
        \vv{y}_1 &= \vv{a}_1 + \cdots \vv{a}_5, \\
        \vv{y}_2 &= \vv{a}_4 + \vv{a}_5 + \vv{a}_8 + \vv{a}_9 + \vv{a}_{10}, \\
        \vv{y}_3 &= \vv{a}_3 + \vv{a}_4 + \vv{a}_6 + \vv{a}_7 + \vv{a}_8.
    \end{align*}
    Put $S_i = B \sqcup D_i$ for $i = 1,2,3$. In Figure~\ref{fig:venn-count-example} we display the Venn cardinality diagrams for $E(S_1)$, $E(S_2)$, and $E(S_3)$ corresponding to the bases induced by $D_1$, $D_2$, and $D_3$, respectively.
    Observe that the multiset of Venn region cardinalities is the same for all three sets, $\{ 0, 1, 1, 1, 1, 3, 3, 3 \}$.

    \begin{figure}
        \centering
        \begin{tabular}{|c|}
        \hline
        $S_1$ \\
        \hline
        \begin{tikzpicture}[set/.style={fill=cyan,fill opacity=0.1}, scale = 0.4]
            \def\firstcircle{(90:2cm) circle (3cm)}
            \def\secondcircle{(210:2cm) circle (3cm)}
            \def\thirdcircle{(330:2cm) circle (3cm)}

            \draw[set] \firstcircle;
            \draw[set] \secondcircle;
            \draw[set] \thirdcircle;

            \draw node at (90:5.6cm) {$X_1$};    
            \draw node at (210:5.6cm) {$X_2$};    
            \draw node at (330:5.6cm) {$X_3$};    
        
                \draw node at (-5.5cm,4.5cm) {$0$};    
                \draw node at (90:2.7cm) {$3$};    
                \draw node at (210:2.7cm) {$3$};
                \draw node at (330:2.7cm) {$1$};
                \draw node at (0,0) {$1$};
                \draw node at (270:2cm) {$1$};
                \draw node at (30:2cm) {$1$};
                \draw node at (150:2cm) {$3$};
        \end{tikzpicture} \\
        \hline
        \end{tabular}
        \begin{tabular}{|c|}
        \hline
        $S_2$ \\
        \hline
        \begin{tikzpicture}[set/.style={fill=cyan,fill opacity=0.1}, scale = 0.4]
            \def\firstcircle{(90:2cm) circle (3cm)}
            \def\secondcircle{(210:2cm) circle (3cm)}
            \def\thirdcircle{(330:2cm) circle (3cm)}

            \draw[set] \firstcircle;
            \draw[set] \secondcircle;
            \draw[set] \thirdcircle;

            \draw node at (90:5.6cm) {$X_1$};    
            \draw node at (210:5.6cm) {$X_2$};    
            \draw node at (330:5.6cm) {$X_3$};    
        
                \draw node at (-5.5cm,4.5cm) {$0$};    
                \draw node at (90:2.7cm) {$1$};    
                \draw node at (210:2.7cm) {$3$};
                \draw node at (330:2.7cm) {$1$};
                \draw node at (0,0) {$3$};
                \draw node at (270:2cm) {$1$};
                \draw node at (30:2cm) {$3$};
                \draw node at (150:2cm) {$1$};
        \end{tikzpicture} \\
            \hline
        \end{tabular}
        \begin{tabular}{|c|}
        \hline
        $S_3$ \\
        \hline
        \begin{tikzpicture}[set/.style={fill=cyan,fill opacity=0.1}, scale = 0.4]
            \def\firstcircle{(90:2cm) circle (3cm)}
            \def\secondcircle{(210:2cm) circle (3cm)}
            \def\thirdcircle{(330:2cm) circle (3cm)}

            \draw[set] \firstcircle;
            \draw[set] \secondcircle;
            \draw[set] \thirdcircle;

            \draw node at (90:5.6cm) {$X_1$};    
            \draw node at (210:5.6cm) {$X_2$};    
            \draw node at (330:5.6cm) {$X_3$};    
        
                \draw node at (-5.5cm,4.5cm) {$0$};    
                \draw node at (90:2.7cm) {$3$};    
                \draw node at (210:2.7cm) {$3$};
                \draw node at (330:2.7cm) {$3$};
                \draw node at (0,0) {$1$};
                \draw node at (270:2cm) {$1$};
                \draw node at (30:2cm) {$1$};
                \draw node at (150:2cm) {$1$};
        \end{tikzpicture} \\
            \hline
        \end{tabular}

        \caption{The Venn cardinality diagrams for $S_1$, $S_2$, and $S_3$ from Example~\ref{ex:venn-count-example}.}
        \label{fig:venn-count-example}
    \end{figure}
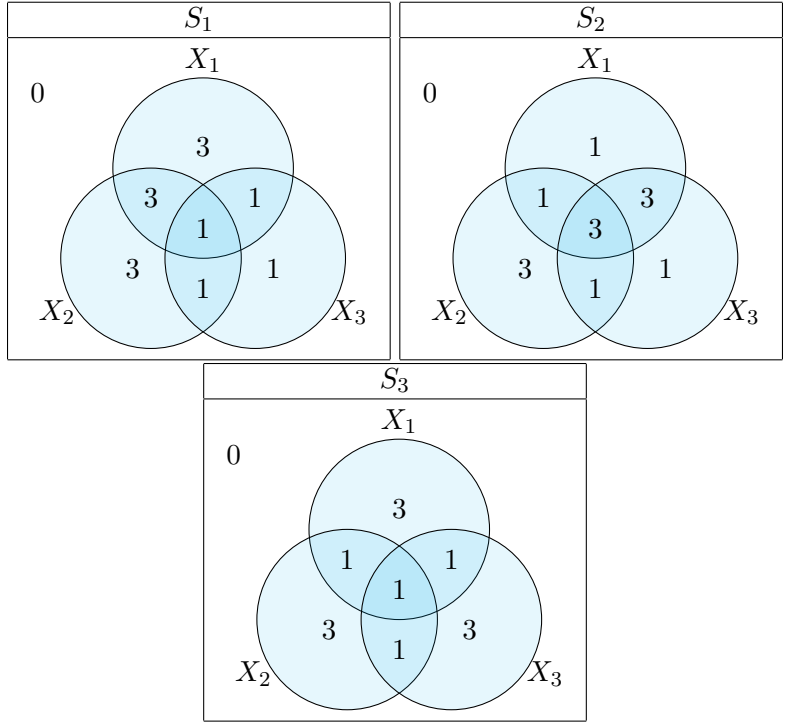

    For $i = 1,2,3$, let $\func{v_i}{\F_2^r}{\venn(S_i, E(S_i))}$ be the coordinate function for $\venn(S_i, E(S_i))$ with respect to the basis for $E(S_i)$ induced by $D_i$. Table~\ref{tab:venn-count-example} lists each Venn cardinality with its corresponding vector.

    \begin{table}
        \centering
        \caption{Venn cardinalities for the sets $S_1$, $S_2$, and $S_3$ in Example~\ref{ex:venn-count-example}.}
        \label{tab:venn-count-example}
        $\begin{array}{|c|ccc|}
            \hline
            & & & \\[-2ex]
            \vv{a} & \abs{v_1(\vv{a})} & \abs{v_2(\vv{a})} & \abs{v_3(\vv{a})} \\[0.5ex]
            \hline
            000 & 0 & 0 & 0 \\
            100 & 3 & 1 & 3 \\
            010 & 3 & 3 & 3 \\
            001 & 1 & 1 & 3 \\
            110 & 3 & 1 & 1 \\
            101 & 1 & 3 & 1 \\
            011 & 1 & 1 & 1 \\
            111 & 1 & 3 & 1 \\
            \hline
       \end{array}$
    \end{table}

    One can check that the matrix
    \[
        M = \begin{bmatrix}
            1 & 0 & 0 \\ 
            0 & 1 & 0 \\ 
            1 & 0 & 1 \end{bmatrix}
    \]
    satisfies $\abs{v_1(\vv{a})} = \abs{v_2(M \vv{a})}$ for all $\vv{a} \in \F_2^r$, so $M$ corresponds to a cardinality-preserving linear bijection $\venn(S_1, E(S_1)) \to \venn(S_2, E(S_2))$. By Theorem~\ref{real_big_theorem} it follows that $S_1$ and $S_2$ are affinely equivalent.
    
    On the other hand, observe that the vectors corresponding to Venn regions of cardinality $3$ for $S_1$ and $S_3$ are $\{100, 010, 110\}$ and $\{100, 010, 001\}$, respectively.
    The first set is dependent but the second is independent. A cardinality-preserving linear bijection $\venn(S_1, E(S_1)) \to \venn(S_3, E(S_3))$ must map the first set to the second set. Since linear bijections preserve dependence and independence, there is no such map $\venn(S_1, E(S_1)) \to \venn(S_3, E(S_3))$. By Theorem~\ref{real_big_theorem}, we conclude that $S_1$ and $S_3$ are not affinely equivalent.
    
\end{example}

\section[Caps with Size-Dimension Difference 3]{Caps with Size-Dimension Difference 3} \label{sec:diff3}

\begin{theorem} \label{thm:diff-3}
    Let $S, T \subseteq \F_2^n$ be sets with size $k$ and dimension $d = k-3$. Then $S$ and $T$ are affinely equivalent if and only if $S$ and $T$ have the same multisets of Venn region cardinalities and the same number of isolated points.
\end{theorem}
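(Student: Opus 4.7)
The plan is to reduce the statement to Theorem~\ref{real_big_theorem}, which characterizes affine equivalence by the existence of a cardinality-preserving linear bijection between the Venn region spaces. The forward direction is immediate: a cardinality-preserving linear bijection $\venn(S, E(S)) \to \venn(T, E(T))$ certainly yields the same multiset of region cardinalities, and because such a map is linear it fixes the zero vector, hence preserves the isolated point region $\vmap(\vv{0})$, giving equal isolated point counts.

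For the reverse direction, the key dimensional observation is that by Corollary~\ref{prop:zero-sum-basis} we have $\dim E(S) = k - (d+1) = 2$ (and similarly for $T$), so each Venn region set is parametrized by $\F_2^2$ and consists of exactly four regions indexed by $00, 10, 01, 11$. I would exploit the fortunate coincidence that $GL_2(\F_2) \cong S_3$: this group has order $6$, acts faithfully on the three nonzero vectors of $\F_2^2$, and so realizes \emph{every} permutation of $\{10, 01, 11\}$ by a unique linear automorphism.

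Given the two hypotheses, I would subtract the isolated-point count from the total multiset to conclude that the multisets of cardinalities over the three nonzero indices also agree for $S$ and $T$. I would then pick any bijection $\sigma$ of $\{10,01,11\}$ aligning $\abs{\vmap_S(\vv{a})}$ with $\abs{\vmap_T(\sigma(\vv{a}))}$, lift it to $M \in GL_2(\F_2)$, and define
\[
    h \colon \venn(S, E(S)) \to \venn(T, E(T)), \qquad h(\vmap_S(\vv{a})) := \vmap_T(M \vv{a}).
\]
By construction $h$ is linear, bijective, and cardinality-preserving (on both the zero index, since $M\vv{0}=\vv{0}$ and the isolated point counts match, and on nonzero indices by choice of $\sigma$). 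Theorem~\ref{real_big_theorem} then yields affine equivalence.

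I do not foresee a serious obstacle; the proof is essentially a counting argument in a very low-dimensional setting. The only subtle point worth spelling out is \emph{why} the isolated-point condition is stated separately from the multiset condition: any linear bijection on $\F_2^2$ must fix $\vv{0}$, so $\abs{\vmap_S(\vv{0})} = \abs{\vmap_T(\vv{0})}$ cannot be recovered from multiset equality alone (e.g.\ multisets $\{1,3,3,3\}$ can arise with the $1$ placed either at the origin or at a nonzero index).
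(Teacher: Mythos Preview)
Your proposal is correct and follows essentially the same approach as the paper: both directions reduce to Theorem~\ref{real_big_theorem}, the dimension computation $\dim E(S)=k-(d+1)=2$ comes from Corollary~\ref{prop:zero-sum-basis}, and the key observation is that every permutation of $\F_2^2$ fixing the origin is linear (equivalently, $GL_2(\F_2)\cong S_3$). Your write-up is a bit more explicit about lifting the permutation to a matrix, and your closing remark on why the isolated-point hypothesis cannot be absorbed into the multiset hypothesis is a nice addition, but the argument is the same.
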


\begin{proof}
    If $S$ and $T$ are affinely equivalent, then Theorem~\ref{real_big_theorem} implies that the multisets of cardinalities of elements of $\venn(S, E(S))$ and $\venn(T, E(T))$ are identical, and that $\abs{V_0(S)} = \abs{V_0(T)}$, because these correspond to the zero vector in $\venn(S, \mathcal{V})$ and $\venn(T, \mathcal{W})$, respectively.

    Now, suppose that $S$ and $T$ have the same multisets of Venn region cardinalities and the same number of isolated points. Then there exists a cardinality-preserving bijection $\func{f}{\venn(S, E(S))}{\venn(T, E(T))}$ that maps the isolated point set of $S$ to that of $T$. Both $\venn(S, E(S))$ and $\venn(T, E(T))$ have dimension $k - (d+1) = 2$, so are isomorphic to $\F_2^2$. Since every permutation of $\F_2^2$ that fixes the origin is linear, it follows that the function $f$ must be linear, so $f$ is a cardinality-preserving linear bijection. By Theorem~\ref{real_big_theorem}, this implies that $S$ and $T$ are affinely equivalent.
\end{proof}

\begin{lemma} \label{diff2-cap-lem}
    Let $S \subseteq \F_2^n$ have $\abs{S} = k$ and $\dim(S) = d = k-3$, and let $a \leq b \leq c$ be the Venn region cardinalities of $S$, not including $V_0(S)$. Then $S$ is a cap if and only if the numbers $a+b$, $b+c$, and $a+c$ are all greater than or equal to $6$.
\end{lemma}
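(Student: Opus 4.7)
The plan is to leverage the small dimension of $E(S)$ to turn the cap condition into an explicit arithmetic constraint on the three Venn cardinalities.

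First I would set up the structure. Since $\dim(S) = d = k-3$, Corollary~\ref{prop:zero-sum-basis} gives $\dim E(S) = k-(d+1) = 2$, so $|E(S)| = 4$. Thus $E(S)$ consists of $\varnothing$ together with three distinct non-empty sets $X_1, X_2, X_3$ satisfying $X_1 \symdif X_2 = X_3$. The Venn diagram has $2^2 = 4$ regions, one of which is $V_0(S)$; the remaining three regions are indexed by the nonzero vectors of $\F_2^2$ and have cardinalities (in some order) $a, b, c$.

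Next I would identify the cardinalities of $X_1, X_2, X_3$ in terms of $a, b, c$. By Corollary~\ref{cor:containment-number}, each non-empty element of $E(S)$ contains exactly $2^{r-1} = 2$ non-empty Venn regions, and by Lemma~\ref{prop:venn-in-zero-sum} the region $\vmap(\vv{a})$ lies in $\zmap(\vv{b})$ precisely when $\vv{a}\cdot\vv{b}=1$. Running through the three nonzero $\vv{b}\in\F_2^2$, the three pairs of non-$V_0$ regions that appear are exactly the three two-element subsets of the non-$V_0$ Venn region set, so the three non-empty elements of $E(S)$ have cardinalities $a+b$, $b+c$, and $a+c$, in some order.

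Finally I would translate the cap condition. A quad in $\F_2^n$ is precisely a $4$-element zero-sum subset, so $S$ is a cap if and only if no element of $E(S)$ has cardinality exactly $4$. A non-empty even zero-sum set has even cardinality, and cardinality $2$ is impossible because $\vv{x}+\vv{y}=\vv{0}$ with $\vv{x}\neq \vv{y}$ has no solutions in $\F_2^n$. Hence each of $a+b$, $b+c$, $a+c$ is an even integer at least $4$, and avoiding the value $4$ is equivalent to each being at least $6$. Combining this with the previous paragraph proves the biconditional.

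The only slightly subtle point is confirming that the three non-empty elements of $E(S)$ really do correspond bijectively to the three pairwise sums (rather than, say, overlapping or collapsing); this is clean because the three nonzero vectors of $\F_2^2$ pair up naturally with the three two-element subsets of $\{10,01,11\}$ via the dot-product criterion, and no routine calculation is required beyond that observation.
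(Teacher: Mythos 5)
Your proposal is correct and follows essentially the same route as the paper's proof: identify the three nonempty elements of $E(S)$ as the pairwise unions of the non-$V_0$ Venn regions, so their sizes are $a+b$, $b+c$, $a+c$, and then note that the cap condition is precisely the absence of an even zero-sum set of size $4$, while sizes $0$ and $2$ are already impossible. The only difference is that you justify the key observation explicitly via Corollary~\ref{cor:containment-number} and Lemma~\ref{prop:venn-in-zero-sum}, where the paper simply states it.
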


\begin{proof}
    Observe that the nonempty even zero-sum sets of $S$ have sizes $a+b$, $b+c$, and $a+c$. These numbers must be even, and none can equal $0$ or $2$, because these are nonempty sets consisting of distinct points. Recall that $S$ is a cap if and only if no four elements of $S$ sum to $0$, which is equivalent to there being no even zero-sum sets of size $4$. Therefore $S$ is a cap if and only if the even numbers $a+b$, $b+c$, and $a+c$ are greater than $4$, which means they are greater than or equal to $6$.
\end{proof}

\begin{theorem} \label{thm:size-dim-3}
    Let $k$ be a positive integer. Nonnegative integers $a \leq b \leq c$ are the Venn region cardinalities (not including $V_0(S)$) for a $k$-cap in $\F_2^n$ with size-dimension difference $3$ if and only if
    \begin{enumerate}
        \item $a + b + c \leq k$;
        \item $a$, $b$, and $c$ all have the same parity;
        \item $a+b \geq 6$.
    \end{enumerate}
\end{theorem}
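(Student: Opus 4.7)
The plan is to prove each direction separately.

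For the forward direction, suppose $S$ is a $k$-cap with $\dim(S) = k-3$ and Venn region cardinalities $a \leq b \leq c$ outside $V_0(S)$. Condition (1) holds because $V_0(S)$ together with the three other Venn regions partition $S$, so $a+b+c = k - |V_0(S)| \leq k$. For condition (2), Corollary~\ref{prop:zero-sum-basis} gives $\dim E(S) = k-(d+1) = 2$, and by Corollary~\ref{cor:containment-number} each of the three nonempty elements of $E(S)$ is a union of exactly two nonempty Venn regions; since there are $\binom{3}{2}=3$ such pairs, these elements must have sizes $a+b$, $a+c$, $b+c$, which are all even, forcing $a$, $b$, $c$ to share a common parity. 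Condition (3) then follows from Lemma~\ref{diff2-cap-lem} applied to the smallest sum $a+b$.

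For the converse, I would construct $S$ explicitly. Set $m = k - (a+b+c) \geq 0$ and choose an affinely independent $B = \{\vv{b}_1, \ldots, \vv{b}_{k-2}\} \subseteq \F_2^n$ for any $n \geq k-3$. Partition $B = B_{00} \sqcup B_{10} \sqcup B_{01} \sqcup B_{11}$, whose sizes will yield the target Venn region cardinalities. In the generic case $a \geq 1$, take $|B_{00}|=m$, $|B_{10}|=a-1$, $|B_{01}|=b-1$, $|B_{11}|=c$; in the edge case $a = 0$ (where conditions (2) and (3) force $b \geq 6$ and $b, c$ even), take $|B_{00}|=m$, $|B_{10}|=b-1$, $|B_{01}|=c-1$, $|B_{11}|=0$. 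Define
\[
    \vv{w}_1 = \sum_{\vv{b} \in B_{10} \cup B_{11}} \vv{b}, \qquad \vv{w}_2 = \sum_{\vv{b} \in B_{01} \cup B_{11}} \vv{b},
\]
and put $S = B \sqcup \{\vv{w}_1, \vv{w}_2\}$.

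Verification then proceeds in steps. The common parity assumption makes $|B_{10} \cup B_{11}|$ and $|B_{01} \cup B_{11}|$ both odd, so each $\vv{w}_i \in \aff(B)$ and the corresponding $X_{\vv{w}_i}$ from Definition~\ref{def:dependent-to-zero-sums} is a bona fide even zero-sum subset of $S$. The size lower bounds from condition (3) guarantee that $\vv{w}_1$ and $\vv{w}_2$ are distinct points outside $B$: any collision would exhibit a nonempty even subset of $B$ summing to zero, contradicting affine independence. Thus $|S| = k$ and $\dim(S) = k-3$. By Corollary~\ref{prop:zero-sum-basis}, $\{X_{\vv{w}_1}, X_{\vv{w}_2}\}$ is a basis for $E(S)$, and a direct intersection computation shows the Venn regions $V_{00}, V_{10}, V_{01}, V_{11}$ have cardinalities matching $\{m, a, b, c\}$. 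Finally, the three pairwise sums of $\{a,b,c\}$ are all at least $a+b \geq 6$, so Lemma~\ref{diff2-cap-lem} certifies that $S$ is a cap.

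The main obstacle is the case analysis around zero values in $\{a, b, c\}$: because $V_{10}$ and $V_{01}$ must each contain one of the $\vv{w}_i$, any zero among the target cardinalities must be routed to $|V_{11}|$, which forces the two-case construction above. Condition (3) is precisely what rules out the degenerate configurations where even the revised assignment would force $\vv{w}_1 = \vv{w}_2$ or $\vv{w}_i \in B$.
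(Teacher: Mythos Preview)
Your proposal is correct and follows essentially the same approach as the paper: the forward direction uses the partition of $S$ into Venn regions together with Lemma~\ref{diff2-cap-lem}, and the converse builds $S$ from an affine basis of size $k-2$ plus two dependent points chosen so that the induced zero-sum sets yield the prescribed Venn cardinalities. The one organizational difference is in the construction: the paper assigns the smallest cardinality $a$ to the intersection region $X_{\vv{w}_1}\cap X_{\vv{w}_2}$ (so that region carries no dependent point and may be empty), which handles $a=0$ uniformly; you instead assign $c$ to the intersection, forcing the separate $a=0$ case. One small phrasing issue: your sentence ``each of the three nonempty elements of $E(S)$ is a union of exactly two \emph{nonempty} Venn regions'' is not literally true when $a=0$, though your conclusion that the element sizes are $a+b$, $a+c$, $b+c$ remains valid.
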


\begin{proof}
    Let $k$ be a positive integer and $a \leq b \leq c$ be nonnegative integers.
    
    Suppose $a$, $b$, and $c$ are the Venn region cardinalities for a $k$-cap in $\F_2^n$, not including $V_0(S)$. 
    Because the Venn regions partition the cap, we know $a + b + c \leq k$. Since any pair of these Venn regions constitute and even zero-sum, the sums $a+b$, $b+c$, and $a+c$ must all be even, so $a$, $b$, and $c$ must have the same cardinality. Finally, it follows from Lemma~\ref{diff2-cap-lem} that each of $a+b \leq 6$.

    \bigskip
    
    Suppose that:
    \begin{enumerate}
        \item $a + b + c \leq k$;
        \item $a$, $b$, and $c$ all have the same parity;
        \item $a+b \geq 6$.
    \end{enumerate}

    Note that since $a \leq b \leq c$, we have $a+c \geq a+b \geq 6$ and $b+c \geq a+b \geq 6$. Also, if $a = 0$, then $c \geq b \geq 6 - a = 6$, so $b$ and $c$ are both positive.
    Let
    \[
        \vv{x}_1, \ldots, \vv{x}_{b-1}, 
        \vv{y}_1, \ldots, \vv{y}_{c-1}, 
        \vv{z}_1, \ldots, \vv{z}_{a}, 
        \vv{w}_1, \ldots, \vv{w}_{k-(a+b+c)}
        \in \F_2^n
    \]
    be independent, and set
    \begin{align*}
        x_b &= x_1 + \cdots + x_{b-1} + z_1 + \cdots + z_a, \\
        y_c &= y_1 + \cdots + y_{c-1} + z_1 + \cdots + z_a.
    \end{align*}
    Let $V_b = \{x_1, \ldots, x_b\}$, and $V_c = \{y_1, \ldots, y_c\}$, and $V_a = \{z_1, \ldots, z_a\}$, and set $C = V_a \cup V_b \cup V_c \cup \{\vv{w}_1, \ldots, \vv{w}_{k-(a+b+c)}\}$.
    Then $V_a \cup V_b$, $V_a \cup V_c$, and $V_b \cup V_c$ are the nonempty even zero-sum sets of $C$. Since these have cardinality $a+b$, $a+c$, and $b+c$, respectively, and these numbers are all $\geq 6$, we conclude that the set $C$ is a cap.   
    
\end{proof}

Although we do not currently have a closed formula counting the number of equivalence classes of caps of size $k$ and dimension $k-3$, Theorem~\ref{thm:size-dim-3} does make it very easy to write a computer program counting the classes for any particular value of $k$. In Table~\ref{tab:size-dim-3}, we list the number of equivalence classes of $k$-caps with dimension $k-3$ for $8 \leq k \leq 27$. (There are no such caps for $k \leq 8$.)

\begin{table}
    \centering
    \caption{The number of equivalence classes of $k$-caps with dimension $k-3$.}
    \label{tab:size-dim-3}
    \[
    \begin{array}{|c|c|}
        \hline
        k	& \text{\# of classes} \\
        \hline
        8   & 0 \\
        9	& 1 \\
        10	& 2 \\
        11	& 4 \\
        12	& 7 \\
        13	& 10 \\
        14	& 14 \\
        \hline
    \end{array}
    \quad
    \begin{array}{|c|c|}
        \hline
        k	& \text{\# of classes} \\
        \hline
        15	& 19 \\
        16	& 25 \\
        17	& 31 \\
        18	& 39 \\
        19	& 47 \\
        20	& 57 \\
        21  & 67 \\
        \hline
    \end{array}
    \quad
    \begin{array}{|c|c|}
        \hline
        k	& \text{\# of classes} \\
        \hline
        21	& 67 \\
        22	& 79 \\
        23	& 91 \\
        24	& 106 \\
        25	& 120 \\
        26	& 137 \\
        27	& 154 \\
        \hline
    \end{array}
    \]
\end{table}

\section[Caps of dimension 7]{Caps of dimension 7} \label{sec:dim7caps}

In \cite{quad128}, the authors study caps of dimension $7$ in $\F_2^n$ by defining the \deff{extended cap basis type} (\cite[Definition 2.8]{quad128}). They first characterize the equivalence classes of these caps in terms of their possible extended basis types. For each, they construct a template of the cap, consisting of a basis $\{\vv{a}_1, \ldots, \vv{a}_8\}$ for the cap together with dependent elements $\vv{x}_1, \ldots, \vv{x}_r$, together with expressions for each $\vv{x}_i$ as sums of basis elements. They then determine which extended cap basis types correspond to equivalent caps, and which do not. In this section, we will consider each cap template from \cite{quad128} and use the Venn diagram method to determine which are equivalent and which are not.

\subsection{10-caps of dimension 7}

Table~\ref{dim7-10cap-templates} below is reproduced from \cite[Section 5]{quad128}, which gives three templates that cover all possibilities for caps of size $10$ and dimension $7$.

\begin{table}
    \centering
    \caption{Dependent set templates for $7$-dimensional $10$-caps with basis $\{\vv{a}_1, \ldots, \vv{a}_8\}$.}
    \label{dim7-10cap-templates}
    \begin{tabular}{|ll|}
        \hline
        \textbf{basis extended type} & \textbf{dependent set template} \\
        \hline
        $7 \hyph 5 \hyph (4)$
            & $\vv{x}_1 = \vv{a}_1 + \cdots + \vv{a}_7$ \\
            & $\vv{x}_2 = \vv{a}_4 + \cdots + \vv{a}_8$ \\
        \hline
        $5 \hyph 5 \hyph (2)$
            & $\vv{x}_1 = \vv{a}_1 + \cdots + \vv{a}_5$ \\
            & $\vv{x}_2 = \vv{a}_4 + \cdots + \vv{a}_8$ \\
        \hline
        $5 \hyph 5 \hyph (3)$
            & $\vv{x}_1 = \vv{a}_1 + \cdots + \vv{a}_5$ \\
            & $\vv{x}_2 = \vv{a}_3 + \cdots + \vv{a}_7$ \\
        \hline
    \end{tabular}
\end{table}

Following the same process as in Example~\ref{ex:venn-count-example}, we obtain Venn cardinality diagrams for each of these cap templates, displayed in Figure~\ref{dim7-10caps-Venn-counts}.

\begin{figure}
    \centering
    \begin{tabular}{|c|}
        \hline
        $7 \hyph 5 \hyph (4)$ \\
        \hline
        \begin{tikzpicture}[set/.style={fill=cyan,fill opacity=0.1}, scale = 0.3]
        \def\firstcircle{(0:1cm) circle (3cm)}
        \def\secondcircle{(180:1cm) circle (3cm)}
        
        \draw[set] \firstcircle;
        \draw[set] \secondcircle;

        \draw node at (4.4cm,3.4cm) {$X_2$};    
        \draw node at (-4.4cm,3.4cm) {$X_1$};    
        
            \draw node at (0:0cm) {$4$};
            \draw node at (180:2.7cm) {$4$};
            \draw node at (0:2.7cm) {$2$};
    \end{tikzpicture} \\
        \hline
    \end{tabular}
    \begin{tabular}{|c|}
        \hline
        $5 \hyph 5 \hyph (2)$ \\
        \hline
        \begin{tikzpicture}[set/.style={fill=cyan,fill opacity=0.1}, scale = 0.3]
        \def\firstcircle{(0:1cm) circle (3cm)}
        \def\secondcircle{(180:1cm) circle (3cm)}
        
        \draw[set] \firstcircle;
        \draw[set] \secondcircle;

        \draw node at (4.4cm,3.4cm) {$X_2$};    
        \draw node at (-4.4cm,3.4cm) {$X_1$};    
        
            \draw node at (0:0cm) {$2$};
            \draw node at (180:2.7cm) {$4$};
            \draw node at (0:2.7cm) {$4$};
    \end{tikzpicture} \\
        \hline
    \end{tabular}
    \begin{tabular}{|c|}
        \hline
        $5 \hyph 5 \hyph (3)$ \\
        \hline
        \begin{tikzpicture}[set/.style={fill=cyan,fill opacity=0.1}, scale = 0.3]
        \def\firstcircle{(0:1cm) circle (3cm)}
        \def\secondcircle{(180:1cm) circle (3cm)}
        
        \draw[set] \firstcircle;
        \draw[set] \secondcircle;

        \draw node at (4.4cm,3.4cm) {$X_2$};    
        \draw node at (-4.4cm,3.4cm) {$X_1$};    
        
            \draw node at (90:3.5cm) {$1$};    
            \draw node at (0:0cm) {$3$};
            \draw node at (180:2.7cm) {$3$};
            \draw node at (0:2.7cm) {$3$};
    \end{tikzpicture} \\
        \hline
    \end{tabular}

    \caption{The Venn cardinality diagrams for three caps of size $10$ and dimension $7$.}
    \label{dim7-10caps-Venn-counts}
\end{figure}
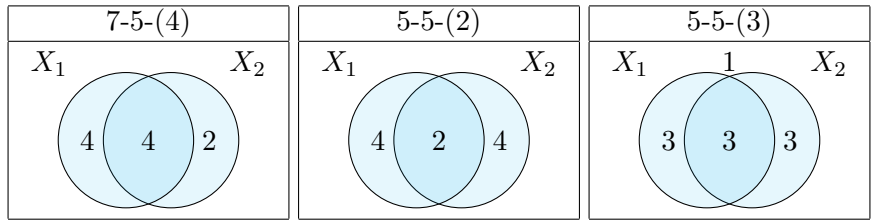

By Theorem~\ref{thm:diff-3}, it follows from Figure~\ref{dim7-10caps-Venn-counts} that caps of types $7 \hyph 5 \hyph (4)$ and $5 \hyph 5 \hyph (2)$ are equivalent, but caps of type $5 \hyph 5 \hyph 3$ are a different equivalent class. This is the same result as Corollary 5.5 of \cite{quad128}.

\subsection{11-caps of dimension 7}

Table~\ref{dim7-11cap-templates} below is reproduced from \cite[Section 6]{quad128}, which gives three templates that cover all possibilities for caps of size $11$ and dimension $7$.

\begin{table}
    \centering
    \caption{Dependent set templates for $7$-dimensional $11$-caps with basis $\{a_1, \ldots, a_8\}$.} \label{dim7-11cap-templates}
    \begin{tabular}{|rl|}
        \hline
        \textbf{basis extended type} & \textbf{dependent set template} \\
        \hline
        $7 \hyph 5 \hyph 5 \hyph (4,4,3)$
            & $x_1 = a_1 + \cdots + a_7$ \\
            & $x_2 = a_4 + \cdots + a_8$ \\
            & $x_3 = a_1 + a_2 + a_6 + a_7 + a_8$ \\
        \hline
        $5 \hyph 5 \hyph 5 \hyph (3,3,3)$
            & $x_1 = a_1 + \cdots + a_5$ \\
            & $x_2 = a_3 + \cdots + a_7$ \\
            & $x_3 = a_1 + a_3 + a_4 + a_6 + a_8$ \\
        \hline
        $5 \hyph 5 \hyph 5 \hyph (3,3,2)$
            & $x_1 = a_1 + \cdots + a_5$ \\
            & $x_2 = a_3 + \cdots + a_7$ \\
            & $x_3 = a_1 + a_2 + a_3 + a_7 + a_8$ \\
        \hline
    \end{tabular}
\end{table}

Figure~\ref{dim7-11caps-Venn-counts} shows the Venn cardinality diagrams for the three templates in Table~\ref{dim7-10cap-templates}, and this information is also collected in Table~\ref{dim7-11caps-Venn-counts-table}. Note that the multiset of Venn cardinalities is the same for all three templates, so they may all represent equivalent caps. 

\begin{figure}
    \centering
    \begin{tabular}{|c|}
        \hline
        $7 \hyph 5 \hyph 5 \hyph (4,4,3)$ \\
        \hline
        \begin{tikzpicture}[set/.style={fill=cyan,fill opacity=0.1}, scale = 0.4]
            \def\firstcircle{(90:2cm) circle (3cm)}
            \def\secondcircle{(210:2cm) circle (3cm)}
            \def\thirdcircle{(330:2cm) circle (3cm)}

            \draw[set] \firstcircle;
            \draw[set] \secondcircle;
            \draw[set] \thirdcircle;

            \draw node at (90:5.6cm) {$X_1$};    
            \draw node at (210:5.6cm) {$X_2$};    
            \draw node at (330:5.6cm) {$X_3$};    
        
                \draw node at (90:2.7cm) {$2$};    
                \draw node at (210:2.7cm) {$1$};
                \draw node at (330:2.7cm) {$1$};
                \draw node at (0,0) {$2$};
                \draw node at (270:2cm) {$1$};
                \draw node at (30:2cm) {$2$};
                \draw node at (150:2cm) {$2$};
    \end{tikzpicture} \\
        \hline
    \end{tabular}
    \begin{tabular}{|c|}
        \hline
        $5 \hyph 5 \hyph 5 \hyph (3,3,3)$ \\
        \hline
        \begin{tikzpicture}[set/.style={fill=cyan,fill opacity=0.1}, scale = 0.4]
            \def\firstcircle{(90:2cm) circle (3cm)}
            \def\secondcircle{(210:2cm) circle (3cm)}
            \def\thirdcircle{(330:2cm) circle (3cm)}

            \draw[set] \firstcircle;
            \draw[set] \secondcircle;
            \draw[set] \thirdcircle;

            \draw node at (90:5.6cm) {$X_1$};    
            \draw node at (210:5.6cm) {$X_2$};    
            \draw node at (330:5.6cm) {$X_3$};    
        
                \draw node at (90:2.7cm) {$2$};    
                \draw node at (210:2.7cm) {$2$};
                \draw node at (330:2.7cm) {$2$};
                \draw node at (0,0) {$2$};
                \draw node at (270:2cm) {$1$};
                \draw node at (30:2cm) {$1$};
                \draw node at (150:2cm) {$1$};
    \end{tikzpicture} \\
        \hline
    \end{tabular}
    \begin{tabular}{|c|}
        \hline
        $5 \hyph 5 \hyph 5 \hyph (3,3,2)$ \\
        \hline
        \begin{tikzpicture}[set/.style={fill=cyan,fill opacity=0.1}, scale = 0.4]
            \def\firstcircle{(90:2cm) circle (3cm)}
            \def\secondcircle{(210:2cm) circle (3cm)}
            \def\thirdcircle{(330:2cm) circle (3cm)}

            \draw[set] \firstcircle;
            \draw[set] \secondcircle;
            \draw[set] \thirdcircle;

            \draw node at (90:5.6cm) {$X_1$};    
            \draw node at (210:5.6cm) {$X_2$};    
            \draw node at (330:5.6cm) {$X_3$};    
        
                \draw node at (90:2.7cm) {$1$};    
                \draw node at (210:2.7cm) {$2$};
                \draw node at (330:2.7cm) {$2$};
                \draw node at (0,0) {$1$};
                \draw node at (270:2cm) {$1$};
                \draw node at (30:2cm) {$2$};
                \draw node at (150:2cm) {$2$};
    \end{tikzpicture} \\
        \hline
    \end{tabular}

    \caption{The Venn cardinality diagrams for three caps of size $11$ and dimension $7$.}
    \label{dim7-11caps-Venn-counts}
\end{figure}

\begin{table}
    \centering
    \caption{Venn region counts $\abs{v(\vv{a})}$ for the three templates for $7$-dimensional $11$-caps from Table~\ref{dim7-11cap-templates}.}
    \label{dim7-11caps-Venn-counts-table}
    \[
    \begin{array}{|c|ccccc|}
        \hline
        & \textbf{7-5-5-(4,4,3)} && \textbf{5-5-5-(3,3,3)} && \textbf{5-5-5-(3,3,2)} \\ 
        \vv{a} & \abs{v(\vv{a})} && \abs{v(\vv{a})} && \abs{v(\vv{a})} \\ \hline
        000 & 0 && 0 && 0 \\
        100 & 2 && 2 && 1 \\
        010 & 1 && 2 && 2 \\
        001 & 1 && 2 && 2 \\
        110 & 2 && 1 && 2 \\
        011 & 1 && 1 && 1 \\
        101 & 2 && 1 && 2 \\
        111 & 2 && 2 && 1 \\
        \hline
    \end{array}
    \]
\end{table}



Let
\[
    A = \begin{bmatrix} 1 & 1 & 0 \\ 0 & 1 & 1 \\ 0 & 0 & 1 \end{bmatrix}
    \qquad \text{and} \qquad
    B = \begin{bmatrix} 1 & 0 & 0 \\ 1 & 1 & 0 \\ 0 & 0 & 1 \end{bmatrix}.
\]
It is straightforward to check that $A$ permutes the Venn regions of a $7 \hyph 5 \hyph 5 \hyph (4,4,3)$ template cap to the $5 \hyph 5 \hyph 5 \hyph (3,3,3)$ cap appropriately, and that $B$ permutes those of a $5 \hyph 5 \hyph 5 \hyph (3,3,3)$ template cap to those of a $5 \hyph 5 \hyph 5 \hyph (3,3,2)$ cap. Since these templates cover all possible $11$-caps of dimension $7$, we conclude that all such caps are equivalent. This is the same result as Corollary 6.9 of \cite{quad128}.

\subsection{12-caps of dimension 7}

Table~\ref{dim7-12cap-templates} below is reproduced from \cite[Section 7]{quad128}, which gives three templates that cover all possibilities for caps of size $12$ and dimension $7$.

\begin{table}[h]
    \centering
    \caption{Dependent set templates for $7$-dimensional $12$-caps with basis $\{a_1, \ldots, a_8\}$.} \label{dim7-12cap-templates}
    \begin{tabular}{|rl|}
        \hline
        \textbf{basis extended type} & \textbf{dependent set template} \\
        \hline
        $7 \hyph 5 \hyph 5 \hyph 5 \hyph (4,4,4,3,3,3)$
            & $x_1 = a_1 + \cdots + a_7$ \\
            & $x_2 = a_4 + \cdots + a_8$ \\
            & $x_3 = a_1 + a_2 + a_6 + a_7 + a_8$ \\
            & $x_4 = a_1 + a_3 + a_5 + a_7 + a_8$ \\
        \hline
        $5 \hyph 5 \hyph 5 \hyph 5 \hyph (2,3,3,3,3,3)$
            & $x_1 = a_1 + \cdots + a_5$ \\
            & $x_2 = a_1 + a_2 + a_3 + a_7 + a_8$ \\
            & $x_3 = a_3 + \cdots + a_7$ \\
            & $x_4 = a_2 + a_3 + a_4 + a_6 + a_8$ \\
        \hline
        $5 \hyph 5 \hyph 5 \hyph 5 \hyph (2,3,3,3,3,2)$
            & $x_1 = a_1 + \cdots + a_5$ \\
            & $x_2 = a_1 + a_2 + a_3 + a_7 + a_8$ \\
            & $x_3 = a_3 + \cdots + a_7$ \\
            & $x_4 = a_2 + a_4 + a_6 + a_7 + a_8$ \\
        \hline
    \end{tabular}
\end{table}

Table~\ref{dim7-12caps-Venn-counts-table} shows the Venn region cardinality information diagrams for the three templates in Table~\ref{dim7-12cap-templates}. Again, the multiset of Venn cardinalities is the same for all three templates, so they may all represent equivalent caps. 

\begin{table}
    \centering
    \caption{Venn region counts $\abs{v(\vv{a})}$ for the three templates for $7$-dimensional $12$-caps from Table~\ref{dim7-12cap-templates}.} \label{dim7-12caps-Venn-counts-table}
    \[
    \begin{array}{|c|ccccc|}
        \hline
        & \textbf{7-5-5-5} && \textbf{5-5-5-5} && \textbf{5-5-5-5} \\ 
        & \textbf{-(4,4,4,3,3,3)} && \textbf{-(2,3,3,3,3,3)} && \textbf{-(2,3,3,3,3,2)} \\ 
        
        \vv{a} & \abs{v(\vv{a})} && \abs{v(\vv{a})} && \abs{v(\vv{a})} \\ \hline
        0000 & 0 && 0 && 0 \\
        1000 & 1 && 1 && 1 \\
        0100 & 1 && 1 && 1 \\
        0010 & 1 && 1 && 1 \\
        0001 & 1 && 1 && 1 \\
        1100 & 1 && 1 && 1 \\
        1010 & 1 && 1 && 1 \\
        1001 & 1 && 0 && 0 \\
        0110 & 0 && 1 && 0 \\
        0101 & 0 && 1 && 1 \\
        0011 & 0 && 1 && 1 \\
        1110 & 1 && 0 && 1 \\
        1101 & 1 && 1 && 1 \\
        1011 & 1 && 1 && 1 \\
        0111 & 1 && 0 && 1 \\
        1111 & 1 && 1 && 0 \\
        \hline
    \end{array}
    \]
\end{table}

Let
\[
    A = \begin{bmatrix} 1 & 0 & 1 & 1 \\ 0 & 1 & 1 & 0 \\ 0 & 0 & 0 & 1 \\ 0 & 0 & 1 & 0 \end{bmatrix}
    \qquad \text{and} \qquad
    B = \begin{bmatrix} 1 & 0 & 1 & 0 \\ 0 & 1 & 0 & 0 \\ 0 & 0 & 1 & 0 \\ 0 & 0 & 0 & 1 \end{bmatrix}.
\]
It is straightforward to check that $A$ permutes the Venn regions of a $7 \hyph 5 \hyph 5 \hyph 5 \hyph (4,4,4,3,3,3)$ template cap to the $5 \hyph 5 \hyph 5 \hyph 5 \hyph (2,3,3,3,3,3)$ cap appropriately, and that $B$ permutes those of a $5 \hyph 5 \hyph 5 \hyph 5 \hyph (2,3,3,3,3,3)$ template cap to those of a $5 \hyph 5 \hyph 5 \hyph 5 \hyph (2,3,3,3,3,2)$ cap. Since these templates cover all possible $12$-caps of dimension $7$, we conclude that all such caps are equivalent. This is the same result as Corollary 7.8 of \cite{quad128}.


\printbibliography

@article {Bennett,
    AUTHOR = {Bennett, Michael},
     TITLE = {Bounds on sizes of generalized caps in {$AG(n,q)$} via the
              {C}root-{L}ev-{P}ach polynomial method},
   JOURNAL = {J. Combin. Theory Ser. A},
  FJOURNAL = {Journal of Combinatorial Theory. Series A},
    VOLUME = {168},
      YEAR = {2019},
     PAGES = {255--271},
      ISSN = {0097-3165},
   MRCLASS = {51E22 (11B13 94B05)},
  MRNUMBER = {3974704},
MRREVIEWER = {Dieter Jungnickel},
       DOI = {10.1016/j.jcta.2019.06.001},
       URL = {https://doi.org/10.1016/j.jcta.2019.06.001},
}

@misc {quad128,
      title={How Many Cards Should You Lay Out in {Q}uad-128: A Classification of Caps in {${\rm AG}(7,2)$}}, 
      author={Kariane Calta and Timothy E. Goldberg and Lauren L. Rose},
      year={2025},
      eprint={2501.11173},
      archivePrefix={arXiv},
      primaryClass={math.CO},
      url={https://arxiv.org/abs/2501.11173}, 
      note={submitted},
}

@inbook{carlet,
    author = {Carlet, C.} ,
    title = {On APN Functions Whose Graphs are Maximal Sidon Sets},
booktitle={Latin 2022: Theoretical Informatics},
editors={Castañeda, A., Rodríguez-Henríquez, F.},
    publisher ={Springer, Cham.} ,
volume={13568}, 
    year = {2022},
DOI={https://doi.org/10.1007/978-3-031-20624-5_15}}

@article {lamatpaper,
    AUTHOR = {Crager, Julia and Flores, Felicia and Goldberg, Timothy E. and
              Rose, Lauren L. and Rose-Levine, Daniel and Thornburgh,
              Darrion and Walker, Raphael},
     TITLE = {How many cards should you lay out in a game of \textit{
              {E}ven{Q}uads}: a detailed study of caps in {${\rm AG}(n,2)$}},
   JOURNAL = {Matematica},
  FJOURNAL = {La Matematica},
    VOLUME = {2},
      YEAR = {2023},
    NUMBER = {2},
     PAGES = {382--419},
      ISSN = {2730-9657},
   MRCLASS = {51E10 (05B25 51E15)},
  MRNUMBER = {4614390},
       DOI = {10.1007/s44007-023-00047-0},
       URL = {https://doi.org/10.1007/s44007-023-00047-0},
}

@article{niu,
    AUTHOR = {Minyao Niu, Jinghua Xiao, You Gao},
    TITLE = {New constructions of large cyclic subspace codes via Sidon spaces},
    JOURNAL = {Advances in Mathematics of Communications, 18(4): 1123-1137},
    doi = {10.3934/amc.2022074},
    YEAR = {2024}
}

@article{nagy,
title = {Sidon sets, thin sets, and the nonlinearity of vectorial Boolean functions},
journal = {Journal of Combinatorial Theory, Series A},
volume = {212},
pages = {106001},
year = {2025},
issn = {0097-3165},
doi = {https://doi.org/10.1016/j.jcta.2024.106001},
url = {https://www.sciencedirect.com/science/article/pii/S0097316524001407},
author = {Gábor P. Nagy},
keywords = {Sidon sets, Vectorial Boolean functions, APN functions, Hamming distance, Error correcting codes}
}

@inproceedings{leon,
title = {Applications of Sidon Sets to Telecommunications},
author = {Urbano Leon, Cristhian and Osorio, Rigo and Ruiz, Diego},
booktitle ={International Conference on Applied Mathematics and Informatics},
year = {2013},
month = {11},
doi = {10.13140/2.1.4271.3280}
}

@article{osorio, 
Author={Osorio, J. and Trujillo, C. and Ruiz, D.},
title={Construction of a cryptographic function based on Bose-type Sidon sets},
Journal={AIMS Mathematics},
volume={9(7)},
pages={1123-1137},
year= {2024},
doi={10.3934/math.2024855} }

@misc{SuperSET,
    title = {SuperSET},
   author = {Pereira, Jeffrey},
     year = {2013},
     note = {Undergraduate Thesis at Bard College supervised by Lauren Rose}
}

@article {RedmanRoseWalker,
    AUTHOR = {Redman, Maximus and Rose, Lauren and Walker, Raphael},
     TITLE = {A {S}mall {M}aximal {S}idon {S}et in {$\mathbb{Z}_2^n$}},
   JOURNAL = {SIAM J. Discrete Math.},
  FJOURNAL = {SIAM Journal on Discrete Mathematics},
    VOLUME = {36},
      YEAR = {2022},
    NUMBER = {3},
     PAGES = {1861--1867},
      ISSN = {0895-4801},
   MRCLASS = {05D40 (05B10 11B13 51E15)},
  MRNUMBER = {4468610},
       DOI = {10.1137/21M1454663},
       URL = {https://doi.org/10.1137/21M1454663},
}

@misc{evenquads,
title = {EvenQuads Website: https://awm-math.org/publications/playing-cards/},
 author = {Association for Women in Math},
 url = {https://awm-math.org/publications/playing-cards/evenquads/},
 year = {2020},
}

@article {taitwon,
    AUTHOR = {Tait, Michael and Won, Robert},
     TITLE = {Improved bounds on sizes of generalized caps in {$AG(n,q)$}},
   JOURNAL = {SIAM J. Discrete Math.},
  FJOURNAL = {SIAM Journal on Discrete Mathematics},
    VOLUME = {35},
      YEAR = {2021},
    NUMBER = {1},
     PAGES = {521--531},
      ISSN = {0895-4801},
   MRCLASS = {05B25 (05D99 11B75 51E15)},
  MRNUMBER = {4234149},
MRREVIEWER = {Dieter Jungnickel},
       DOI = {10.1137/20M1369439},
       URL = {https://doi.org/10.1137/20M1369439},
}

@article{potts,
title = {Sidon sets, sum-free sets and linear codes},
journal = {Advances in Mathematics of Communications},
volume = {18},
number = {2},
pages = {549-566},
year = {2024},
issn = {1930-5346},
doi = {10.3934/amc.2023054},
url = {https://www.aimsciences.org/article/id/65644c12eca2737fc4ef4e2f},
author = {Ingo Czerwinski and Alexander Pott},
keywords = {Sidon set, sum-free set, maximum size, linear binary code, codes bound}
}

@phdthesis{daniel-sproj,
  title        = {Results on {\it k}-Covers in the Game {\it Quads}},
  author       = {Daniel Rose-Levine},
  year         = {2024},
  address      = {Annandale-on-Hudson, NY},
  note         = {Available through \url{https://digitalcommons.bard.edu/undergrad/}},
  school       = {Bard College},
  type         = {Undergraduate thesis}
}

@misc{Rosepereira2013Super,
  title={SuperSET},
  author={Pereira, Jeffrey},
  year={2013},
  note         = {Available through \url{https://digitalcommons.bard.edu/undergrad/}},
  school       = {Bard College},
  type         = {Undergraduate thesis}
}

\end{document}